\documentclass[12pt,twoside]{article}

\usepackage{graphicx,graphics, amsthm, amsfonts, amsbsy,amssymb, amsmath, cite, array, hyperref, float,tikz,fancyvrb, hyperref}
\usepackage[T1]{fontenc}
\everymath{\displaystyle}
\usepackage[margin=1in]{geometry}

\allowdisplaybreaks
\renewenvironment{proof}{{\noindent\bfseries Proof.}}{\qed}

\newcommand{\spec}{\operatorname{Spec}}
\newcommand{\Tr}{\operatorname{Tr}}

\newtheorem{theorem}{Theorem}[section]
\newtheorem{remark}[theorem]{Remark}
\newtheorem{example}[theorem]{Example}

\newtheorem{corollary}[theorem]{Corollary}

\newtheorem{proposition}[theorem]{Proposition}

\usetikzlibrary{chains,positioning}
\usepackage{tikz,pgfplots}
\usetikzlibrary{decorations.pathmorphing}
\providecommand{\keywords}[1]
{
	\noindent\textbf{Keywords:} #1
}
\providecommand{\ams}[2]
{
	\noindent\textbf{2020 AMS subject classification:} #2
}

\def\qed{\nolinebreak\hfill\rule{.2cm}{.2cm}\par\addvspace{.5cm}}

\begin{document}
\title{Degree-based weighted adjacency matrices: spectra, integrality, and edge deletion effects}
\author{
	{\small Bilal Ahmad Rather$^{a}$ and Hilal Ahmad Ganie$^{b}$} \\[2mm]
	\small $^{a}$School of Mathematics and Statistics, Shandong University of Technology, Zibo 255049, China\\
	\texttt{bilalahmadrr@gmail.com}\\
	\small $^{b}$Department of School  Education JK Govt, Kashmir, India\\
	\texttt{hilahmad1119kt@gmail.com}
}

\date{}

\pagestyle{myheadings} \markboth{Bilal and Hilal}{Degree-based weighted adjacency matrices: spectra, integrality, and edge deletion effects}
\maketitle
\begin{abstract}
	We study degree-based weighted adjacency matrices associated with symmetric edge-weight functions $\phi(d_u,d_v)$, with emphasis on complete multipartite graphs and spectral changes caused by edge modification. We characterize its families with three distinct eigenvalues and identifies integral matrices. For complete graphs, an exact threshold is obtained that determines whether deleting one edge increases, preserves, or decreases both the spectral radius and the weighted energy; the generalized Randić family is classified completely, thereby correcting and refining earlier published results in [Bilal and Munir, Int. J. Quantum Chem. (2024)]. 
	 We further determine the $ISI$ spectrum under edge deletion from regular complete multipartite graphs, derive the complete weighted spectrum, energy, and inertia of crown multipartite graphs, and prove that adding an edge between two leaves of $S_n$ strictly increases its $ISI$ energy.
\end{abstract}

\vskip 3mm

\keywords{Adjacency matrix; energy; complete multipartite; crown graphs}

\ams{}{05C50, 05C09, 05C92, 15A18.} \textbf{ACM class:} F.2.2

\section{Introduction}\label{section}

Spectral graph theory provides a natural meeting point between combinatorial structure and linear algebra. Given a finite simple graph $G$, one associates with it a matrix whose entries encode adjacency, incidence, distance, degree, or some other structural relation, and then studies the eigenvalues of that matrix in order to recover information about the graph. The adjacency spectrum is the classical example; its role in graph structure, regularity, connectivity, multipartite decompositions, and extremal problems is well documented in \cite{cds,ping,shi}. A related invariant is graph energy, introduced by Gutman \cite{gutmanenergy} as the absolute sum of the adjacency eigenvalues. Although the original motivation came from the total $\pi$-electron energy of conjugated molecules, graph energy has developed into an independent branch of spectral graph theory and matrix analysis; see, for example, \cite{shi,nv}. In the language of matrix theory, the energy of a real symmetric matrix is its trace norm.

A useful extension of the ordinary adjacency matrix is obtained by assigning a degree-dependent weight to every edge. Let
$
G=(V(G),E(G))
$
be a finite simple undirected graph with
$
V(G)=\{v_1,v_2,\ldots,v_n\},
$ and edge set $E(G).$
The degree of a vertex $v_i$ is denoted by $d_i=d_G(v_i)$, and $u\sim v$ means that $u$ and $v$ are adjacent. Let
$$
\phi:\mathbb R_{>0}\times\mathbb R_{>0}\longrightarrow\mathbb R
$$
be a symmetric function, so that
$$
\phi(x,y)=\phi(y,x).
$$
The associated vertex-degree-based index has the form
$$
\Phi(G)=\sum_{uv\in E(G)}\phi(d_u,d_v).
$$
The corresponding degree-based weighted adjacency matrix, also called a general extended adjacency matrix, is
$$
\dot A(G)=A_{\phi}(G)=\big(a_{ij}^{\phi}\big)_{n\times n},
$$
where
$$
a_{ij}^{\phi}=
\begin{cases}
	\phi(d_i,d_j),&v_i\sim v_j,\\
	0,&v_i\not\sim v_j.
\end{cases}
$$
This matrix formalism is valuable because a scalar topological index records only the total contribution of the edge weights, whereas $A_{\phi}(G)$ retains their distribution over the graph and therefore allows eigenvalue, rank, nullity, inertia, integrality, and energy questions to be studied simultaneously. The general degree-based energy framework was developed systematically in\cite{dasenergy}; related bounds for the extended adjacency spectral radius and energy were obtained in \cite{dgf,glza,gutman2022}. Weighted graph energy in a broader setting was considered in \cite{gs,HB}.

Several familiar matrices arise from particular choices of $\phi$. The cases used throughout this paper are summarized below:
$$
\begin{array}{c|c|c}
	\text{notation}&\phi(x,y)&\text{matrix or weight}\\
	\hline
	A&1&\text{ordinary adjacency}\\
	ISI&\frac{xy}{x+y}&\text{inverse sum indeg}\\
	AG&\frac{x+y}{2\sqrt{xy}}&\text{arithmetic--geometric}\\
	GA&\frac{2\sqrt{xy}}{x+y}&\text{geometric--arithmetic}\\
	M1&x+y&\text{first Zagreb}\\
	ABC&\sqrt{\frac{x+y-2}{xy}}&\text{atom--bond connectivity}\\
	R&\frac{1}{\sqrt{xy}}&\text{Randi'c}\\
	M2&xy&\text{second Zagreb}\\
	S&\sqrt{x^2+y^2}&\text{Sombor}\\
	MS&\frac{1}{\sqrt{x^2+y^2}}&\text{modified Sombor}.
\end{array}
$$
We also use the generalized Randić family
$\phi_{\alpha}(x,y)=(xy)^{\alpha},$ for $ \alpha\in\mathbb R.$
These matrices have been studied from different spectral points of view. The $ABC$ matrix and its energy were investigated by Chen \cite{chen2018}; arithmetic--geometric and geometric--arithmetic spectral quantities were treated in \cite{guo,zheng,rodriguez2015,lin1}; Sombor energy and related inequalities appear in \cite{gutman2021,gutmancontributions,bilalMatch,bilalMatch1}; and degree-based random weighted matrices were considered in \cite{li2,li3}. General extended adjacency eigenvalues of structured graph families, including chain graphs, were further investigated in \cite{bilalmdpi}. For the inverse sum indeg matrix in particular, recent work has developed bounds and structural properties of its spectrum and energy \cite{Jamal2023}.
The eigenvalues $\lambda_1(\dot A(G)) $
is positive and simple and admits a strictly positive eigenvector. This positivity is important in the spectral-radius comparisons used later.

The current literature increasingly treats $A_{\phi}(G)$ through a unified rather than matrix-by-matrix approach. Li and Yang established interlacing results for degree-based weighted adjacency eigenvalues under graph operations such as vertex deletion, contraction, and edge subdivision \cite{LiYang2023}. Gao, Li and Yang studied the response of the principal eigenvalue to perturbations \cite{GaoLiYang2024}, while Gao and Yang obtained further bounds on the largest eigenvalue, including estimates adapted to edge deletion and other graph modifications \cite{GaoYang2024}. Two complementary studies by Li and Yang developed unified conditions for weighted spectral properties, numbers of distinct eigenvalues, extremal spectral radius, and energy \cite{LiYangLAA2024,LiYangActa2024}. More recently, Shen and Shan investigated extremal $f$-spectral radii for graphs with prescribed order and size \cite{ShenShan2025}, Li and Zheng obtained extremal results for function-weighted adjacency matrices \cite{LiZheng2025}, and Gao, Li, Yang and Zheng studied Li--Feng transformations for degree-based weighted adjacency matrices \cite{GaoLiYangZheng2025}. These developments show that graph transformations and structured quotient reductions have become central tools in the subject.

One of the main structural families considered here is the complete multipartite graph
$
K_{p_1,p_2,\ldots,p_t}.
$
Its vertex set is partitioned into $t$ independent sets
$
V_1,V_2,\ldots,V_t,
$ with $ |V_i|=p_i, $
and every two vertices from different parts are adjacent. If
$
n=p_1+p_2+\cdots+p_t,
$
then each vertex in $V_i$ has degree
$
d_i=n-p_i.
$
The ordinary adjacency spectrum of a complete multipartite graph is classical, but the degree-based weighted case contains additional information because edges joining different pairs of parts can carry different weights. The block structure nevertheless allows a substantial reduction. Each part $V_i$ contributes a $(p_i-1)$-dimensional zero-sum subspace on which $\dot A(G)$ vanishes. The remaining spectral information is carried by a $t$-dimensional quotient space.
$
n-t+\dim\ker S.
$
Thus $n-t$ is the forced nullity supplied by the partite classes, but the actual zero multiplicity may be larger when the symmetric quotient is singular. 
This reduction also clarifies the problem of having only a few distinct weighted eigenvalues.  More generally, under nonsingularity of $S$, a complete multipartite graph has exactly three distinct weighted eigenvalues precisely when all off-diagonal entries of the symmetric quotient coincide, that is,
$$
\sqrt{p_ip_j},\phi(d_i,d_j)=c_0
$$
for a common positive constant $c_0$ and every $i\neq j$. Regular complete multipartite graphs satisfy this automatically, but regularity is not necessary for an arbitrary degree weight. This distinction is significant: for the $ISI$ matrix, the irregular family $K_{9b,b,b}$ has exactly three distinct eigenvalues, while for the second Zagreb matrix the same phenomenon occurs for $K_{4b,b,b,b}$. Hence the symmetric quotient, rather than equality of the part sizes alone, is the correct object controlling the three-eigenvalue phenomenon.
$
2(t-1)pw.
$
This formula also provides the correct integrality criterion:
$$
\spec(\dot A(K_p^{(t)}))\subseteq\mathbb Z
\quad\Longleftrightarrow\quad
p\phi(d,d)\in\mathbb Z.
$$
This is sharper than requiring $\phi(d,d)$ itself to be integral. The integrality problem belongs to the broader program initiated by Harary and Schwenk \cite{harary}, who asked which graphs have integral spectra.

The second principal theme is the behavior of spectral radius and energy under local graph modifications. Energy change under edge addition and deletion has a long history. Day and So studied singular-value inequalities and graph-energy change \cite{day,day2}; Akbari, Ghorbani and Oboudi investigated edge addition and energy \cite{akbari}; Gutman and Shao treated weighted energy change \cite{gs}; and Wang and So analyzed the effect of arbitrary single-edge deletion \cite{wang}. For degree-based matrices the problem is subtler than for a fixed weighted graph because deleting an edge changes not only the adjacency pattern but also the degrees of its endpoints, and therefore changes the weights of all remaining edges incident with those vertices. This distinction is essential in assessing earlier claims concerning $ISI$ energy \cite{bilalpak}.
The comparison between $K_n$ and $K_n-e$ is therefore governed exactly by
$
R_n=\tfrac{b_n}{a_n}
$
and the threshold
$
\tau_n=\sqrt{\tfrac{n-2}{n-1}}.
$
Both spectral radius and energy increase if $R_n<\tau_n$, remain unchanged if $R_n=\tau_n$, and decrease if $R_n>\tau_n$. Thus there is no universal monotonicity principle independent of the chosen degree weight. For the generalized Randi'c weight $(xy)^{\alpha}$, the threshold becomes especially transparent: deletion increases both quantities when $\alpha<-\tfrac12$, preserves them when $\alpha=-\tfrac12$, and decreases them when $\alpha>-\tfrac12$. The ordinary Randi'c matrix is therefore the exact equality case.

For the inverse sum indeg matrix, deletion of an edge from a regular graph requires additional care. If $G$ is $r$-regular, then
$
ISI(G)=\frac r2A(G),
$
but this identity no longer applies after an edge is deleted because the resulting graph is not regular. For
$
G_{t,p}=K_p^{(t)}-e
$
with
$
r=p(t-1),
$
the two exceptional vertices have degree $r-1$ and the remaining vertices have degree $r$.  The complete $ISI$ spectrum is given and makes it possible to settle the energy comparison without incorrectly transferring an adjacency-energy statement from the original regular graph to the nonregular deleted graph.

Finally, we compare the star
$
S_n=K_{1,n-1}
$
with the unicyclic graph $S_n^+$ obtained by adding an edge between two leaves. We prove that
$
\dot E_{ISI}(S_n^+)>\dot E_{ISI}(S_n)
$
for every $n\geq4$. Thus adding an edge between two leaves of a star strictly increases its $ISI$ energy.

	The following notations are used in the article: $\rho(M)$ is the spectral radius of $M$, for a connected graph with positive edge weights, $\rho(\dot A(G))=\lambda_1(\dot A(G))$. The multiset of eigenvalues of $M$ is denoted by $\spec(M)$, $\lambda^{[k]}$ denoted the eigenvalue $\lambda$ repeated with multiplicity $k$. The algebraic multiplicity of $\lambda$ as an eigenvalue of $M$ is denoted by $\operatorname{mult}_{M}(\lambda)$, $\chi_M(x)$ is the characteristic polynomial
	$
	\chi_M(x)=\det(xI-M)
	$, $\operatorname{Tr}(M)$, $\det(M)$, $\operatorname{rank}(M)$ are the trace, determinant, and rank of $M$, respectively.
	The nullspace of $M$ and its dimension is denoted by $\ker M$ and $\dim\ker M$,  $\operatorname{supp}(x)$ is the support of a vector $x$, namely the coordinates on which $x$ is nonzero, $e_i$ is the $i$-th standard coordinate vector. $\mathbf 1$ and $\mathbf 1_X$ are respectively an all-ones vector and the indicator/all-ones vector supported on the set $X$. $I_k$, $J_k$, $J_{r\times s}$, $0_k$ are the identity matrix of order $k$, the $k\times k$ all-ones matrix, the $r\times s$ all-ones matrix, and the zero matrix of order $k$. The direct sum of vector spaces or matrices is $\oplus$,  $\otimes$ is the Kronecker product, and  $\uplus$ is the multiset union. The inertia $\operatorname{In}(M)$ of a real symmetric matrix, written as $(n_+,n_-,n_0)$, where $n_+$, $n_-$, and $n_0$ are the numbers of positive, negative, and zero eigenvalues. The matrix $\dot A(G)$ is called Integral weighted matrix when all eigenvalues of $\dot A(G)$ are integers.

The article is organized as follows: The results developed in Sections~\ref{section 2} and \ref{section 3} are therefore organized around two complementary principles. The first is \emph{spectral compression}: equitable partitions, orthogonal zero-sum subspaces, diagonal similarity, tensor products, and symmetry reduce large weighted adjacency matrices to small matrices carrying all nontrivial spectral information. The second is \emph{degree-sensitive perturbation}: when an edge is added or deleted, one must account simultaneously for the changed zero pattern and for the new degrees, because the edge weights themselves are recomputed. Together these ideas give exact spectra, corrected integrality criteria, sharp energy comparisons, and explicit exceptional families.

\section{Weighted spectra of complete multipartite graphs}\label{section 2}

This section develops the spectral reduction for degree-based weighted adjacency matrices of complete multipartite graphs. The reduction itself is robust, but two points require care. First, the usual equitable quotient is generally not symmetric, even though the original weighted adjacency matrix is symmetric. The right object for spectral arguments is a diagonally similar symmetric quotient. Second, the nullity contributed by the partite sets is always at least $n-t$, but it can be larger when the reduced matrix is singular. These observations also lead to a corrected characterization of the three-eigenvalue case and to a sharper integrality criterion.

Throughout this section, $\phi(x,y)=\phi(y,x)$ is assumed to be real and nonzero on degree pairs that occur on edges. Whenever Perron--Frobenius arguments are used, we additionally assume $\phi(x,y)>0$.

\medskip
The following result is concerned for the eigenvalue $0$ of $\dot A(G)$ provided $G$ has special structure.
\begin{theorem}[\cite{bilalmdpi}]\label{independence Sombor eigenvalue}
	Let $G$ be connected and let $S=\{v_1,\ldots,v_\alpha\}$ be an independent set such that $N(v_i)=N(v_j)$ for all $i,j$. Then $0$ is an eigenvalue of $\dot A(G)$ with multiplicity at least $\alpha-1$.
\end{theorem}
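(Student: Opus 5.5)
The plan is to exhibit $\alpha-1$ linearly independent vectors in $\ker \dot A(G)$. The key observation is that the vertices of $S$ have identical columns in $\dot A(G)$, not merely identical adjacency patterns. Since $N(v_i)=N(v_j)=:N$ for all $i,j$, every vertex of $S$ has the same degree, say $d=|N|$. Moreover, the degree of each neighbour $w\in N$ does not depend on which $v_i$ we are considering.

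For each $w\in N$ and every $i$, we therefore get $a^{\phi}_{w v_i}=\phi(d_w,d)$, which is independent of $i$. For $w\notin N$ we get $a^{\phi}_{w v_i}=0$. Since $S$ is independent, $w\in S$ lies outside $N$, so these entries also vanish consistently. Hence the columns of $\dot A(G)$ indexed by $v_1,\ldots,v_\alpha$ coincide.

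Next, for $k=2,\ldots,\alpha$, set $x_k=e_{v_1}-e_{v_k}$. Then $\dot A(G)x_k$ is the difference of two equal columns, so it is zero. The vectors $x_2,\ldots,x_\alpha$ are clearly linearly independent, because each $x_k$ is the only one with a nonzero entry in coordinate $v_k$. Thus $\dim\ker\dot A(G)\ge \alpha-1$. Because $\dot A(G)$ is real symmetric, and hence diagonalizable, geometric and algebraic multiplicities agree. This gives $\operatorname{mult}_{\dot A(G)}(0)\ge\alpha-1$.

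The only step needing care is the equality of columns, specifically the fact that the weights $\phi(d_w,d_{v_i})$ do not depend on $i$. This rests on two facts: the equal neighbourhoods force equal degrees $d_{v_i}=|N|$, and $\phi$ depends only on the pair of degrees. Connectedness is not actually needed for this argument.
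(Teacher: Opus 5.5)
Your proof is correct: the equal neighbourhoods force equal degrees, so the columns of $\dot A(G)$ indexed by $S$ coincide, and the differences $e_{v_1}-e_{v_k}$ give $\alpha-1$ independent kernel vectors. You are also right that connectedness is superfluous, and independence of $S$ is automatic too, since $N(v_i)=N(v_j)$ in a loopless graph rules out $v_i\sim v_j$.

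The paper only cites this result from \cite{bilalmdpi} without proof. It uses exactly the same idea in the proof of Theorem~\ref{thm:spectrum_complete_multipartite_updated}, where the zero-sum subspace $U_i$ on a partite set, which is the span of your vectors $x_k$, is shown to lie in $\ker\dot A(G)$.
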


Next, result is about the eigenvalue $-\phi(d,d)$ of  $\dot A(G)$, when $G$ has special clique structure.
\begin{theorem}[\cite{bilalmdpi}]\label{clique Sombor eigenvalue}
	Let $G$ be connected and let $S=\{v_1,\ldots,v_\alpha\}$ be a clique such that
	$N(v_i)\setminus\{v_j\}=N(v_j)\setminus\{v_i\}$ for all $i,j$. Then $d_{v_1}=\cdots=d_{v_\alpha}=:d$, and $-\phi(d,d)$ is an eigenvalue of $\dot A(G)$ with multiplicity at least $\alpha-1$.
\end{theorem}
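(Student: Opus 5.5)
The plan is to argue directly from the definition of $\dot A(G)$: first pin down the degrees, then write down $\alpha-1$ explicit independent eigenvectors for $-\phi(d,d)$.

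\emph{Degrees.} Take $i\neq j$. Since $S$ is a clique, $v_j\in N(v_i)$ and $v_i\in N(v_j)$. Put $N:=N(v_i)\setminus\{v_j\}=N(v_j)\setminus\{v_i\}$. Then $d_{v_i}=|N|+1=d_{v_j}$, so all vertices of $S$ share a common degree $d$.

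\emph{Entries of $\dot A(G)$.} We examine the entries of $\dot A(G)$ in the columns indexed by $v_i,v_j$.
\begin{itemize}
\item For $w\notin\{v_i,v_j\}$, we have $w\sim v_i$ if and only if $w\sim v_j$, by the hypothesis on neighbourhoods. Since $d_{v_i}=d_{v_j}=d$, this gives $a^{\phi}_{w v_i}=a^{\phi}_{w v_j}$.
\item The remaining entries are $a^{\phi}_{v_iv_i}=a^{\phi}_{v_jv_j}=0$ and $a^{\phi}_{v_iv_j}=a^{\phi}_{v_jv_i}=\phi(d,d)$.
\end{itemize}

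\emph{Eigenvectors.} Let $x=e_{v_i}-e_{v_j}$, where $e_v$ is the standard basis vector indexed by $v$. For each $w\notin\{v_i,v_j\}$,
\[
(\dot A x)_w=a^{\phi}_{wv_i}-a^{\phi}_{wv_j}=0 .
\]
At the two special coordinates,
\[
(\dot A x)_{v_i}=0-\phi(d,d)=-\phi(d,d),\qquad (\dot A x)_{v_j}=\phi(d,d)-0=\phi(d,d).
\]
Hence $\dot A x=-\phi(d,d)\,x$.

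\emph{Multiplicity.} Apply this with the fixed index $i=1$ and $j=2,\ldots,\alpha$. The vectors $e_{v_1}-e_{v_j}$ are linearly independent, because each $e_{v_j}$ with $j\ge2$ appears in exactly one of them. So the eigenspace of $-\phi(d,d)$ has dimension at least $\alpha-1$. Since $\dot A(G)$ is real symmetric, geometric and algebraic multiplicities agree, which gives the claim.

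The only step needing care is the first: checking that the twin condition forces equal degrees, so that the weights on the twin edges actually coincide. This is exactly where the degree-based weighting differs from the unweighted case. Connectedness is not needed for the argument.
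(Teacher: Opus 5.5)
Your proof is correct and complete. The twin condition forces $d_{v_i}=d_{v_j}$, which makes the $v_i$- and $v_j$-columns of $\dot A(G)$ agree off $\{v_i,v_j\}$, and the $\alpha-1$ independent vectors $e_{v_1}-e_{v_j}$ are then eigenvectors for $-\phi(d,d)$; they span the zero-sum subspace supported on $S$.

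The paper does not reprove this result; it quotes it from \cite{bilalmdpi}. Your argument is the standard one. It is also exactly the mechanism the paper uses when it applies the result, for instance for the $-b_n$ eigenspace in Proposition~\ref{thm:Kn-minus-edge-exact} and for the eigenvalue $-1$ in Theorem~\ref{thm:star-plus-correct-spectrum}.
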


For an equitable partition $\Pi=\{V_1,\ldots,V_t\}$, the ordinary quotient records the constant row sum from a vertex of $V_i$ into $V_j$. In the present setting this quotient is useful computationally, but it need not be symmetric. The next theorem replaces it by a symmetric matrix without changing the reduced spectrum.

\begin{theorem}\label{thm:spectrum_complete_multipartite_updated}
	Let $G\cong K_{p_1,p_2,\ldots,p_t}$, where $t\geq 2$, $p_i\geq 1$,
	$
	n=\sum_{i=1}^t p_i,
	$
	and let
	$
	d_i=n-p_i
	$
	be the common degree of the vertices belonging to the $i$-th partite set. Let
	$
	M=(m_{ij})_{t\times t},
	$
	where
	$
	m_{ii}=0
	$
	and
	$
	m_{ij}=p_j\phi(d_i,d_j)~
	 (i\neq j).	$
	Let
	$
	D=\operatorname{diag}(p_1,\ldots,p_t)
	$
	and define
	$
	S=D^{1/2}MD^{-1/2}=(s_{ij})_{t\times t}.
	$
	Then
	$
	s_{ii}=0
	$
	and, for $i\neq j$,
	$
	s_{ij}=\sqrt{p_ip_j}\,\phi(d_i,d_j).
	$
	In particular, under the standing assumption that $\phi$ is symmetric, $S$ is a real symmetric matrix and $M$ is similar to $S$. Moreover,
	$
	\chi_{\dot A(G)}(x)
	=
	x^{n-t}\det(xI_t-S).
	$
	Consequently,
	$
	\spec(\dot A(G))
	=
	\{0^{[n-t]}\}\uplus\spec(S)
	=
	\{0^{[n-t]}\}\uplus\spec(M),
	$
	where $\uplus$ denotes multiset union. Furthermore,
	$
	\operatorname{rank}\dot A(G)
	=
	\operatorname{rank}S
	$
	and
	$
	\operatorname{mult}_{\dot A(G)}(0)
	=
	n-t+\dim\ker S.
	$
	Hence the multiplicity of $0$ is exactly $n-t$ if and only if $S$ is nonsingular.
\end{theorem}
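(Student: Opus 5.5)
The plan is to write $\dot A(G)$ in block form with respect to the partition $V_1,\ldots,V_t$ and split $\mathbb R^n$ into two orthogonal invariant subspaces. Order the vertices part by part. Every vertex of $V_i$ has degree $d_i=n-p_i$, so the $(i,j)$ block of $\dot A(G)$ is $\phi(d_i,d_j)J_{p_i\times p_j}$ for $i\neq j$, and the diagonal blocks are $0_{p_i}$. The entries of $S$ follow directly: $s_{ij}=p_i^{1/2}\,p_j\phi(d_i,d_j)\,p_j^{-1/2}=\sqrt{p_ip_j}\,\phi(d_i,d_j)$. This expression is symmetric in $i,j$ because $\phi$ is symmetric, and $S=D^{1/2}MD^{-1/2}$ gives the similarity $M\sim S$.

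\textbf{Step 1: the zero-sum subspace.} Let $W=\bigoplus_i\{x\in\mathbb R^n:\operatorname{supp}(x)\subseteq V_i,\ \mathbf 1_{V_i}^{T}x=0\}$, which has dimension $\sum_i(p_i-1)=n-t$. For $x$ in the $i$-th summand, every block $J_{p_j\times p_i}$ annihilates $x$, so $\dot A(G)x=0$. This is the mechanism behind Theorem~\ref{independence Sombor eigenvalue} applied to each part, but here I will verify it directly, since I need the exact subspace.

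\textbf{Step 2: the complement.} The orthogonal complement of $W$ is $U=\operatorname{span}\{u_i=p_i^{-1/2}\mathbf 1_{V_i}\}$, and these vectors form an orthonormal basis of $U$. A direct computation gives $\dot A(G)u_j=\sum_{i\ne j}\phi(d_i,d_j)\,p_j p_j^{-1/2}\mathbf 1_{V_i}=\sum_i s_{ij}u_i$. So $U$ is invariant, and the matrix of $\dot A(G)|_U$ in this orthonormal basis is exactly $S$.

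\textbf{Step 3: assembling the conclusions.} Take the orthogonal matrix $Q=[B_W\mid u_1\cdots u_t]$, where $B_W$ is an orthonormal basis of $W$. Then $Q^T\dot A(G)Q=0_{n-t}\oplus S$. This gives $\chi_{\dot A(G)}(x)=x^{n-t}\det(xI_t-S)$, and hence the multiset identity for the spectrum, with $\spec(S)=\spec(M)$ by similarity. Rank is preserved under orthogonal similarity, so $\operatorname{rank}\dot A(G)=\operatorname{rank}S$. Both $\dot A(G)$ and $S$ are symmetric, so algebraic and geometric multiplicities coincide. Therefore $\operatorname{mult}(0)=n-\operatorname{rank}S=n-t+\dim\ker S$, and this equals $n-t$ exactly when $S$ is nonsingular.

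The only step needing care is Step 2: the coefficient of $u_i$ must come out as $\sqrt{p_ip_j}\,\phi(d_i,d_j)$ and not as the asymmetric $p_j\phi(d_i,d_j)$. This is precisely why one uses the normalized vectors $u_i$ rather than the indicators $\mathbf 1_{V_i}$, whose coefficient matrix is the equitable quotient $M$. Everything else is routine bookkeeping.
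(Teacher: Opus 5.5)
Your proposal is correct and follows essentially the same route as the paper: the $(n-t)$-dimensional zero-sum subspace inside the parts lies in the kernel, its orthogonal complement is spanned by the normalized indicators $p_i^{-1/2}\mathbf 1_{V_i}$, on which $\dot A(G)$ acts by $S$, and the resulting orthogonal similarity $\dot A(G)\sim 0_{n-t}\oplus S$ gives the characteristic polynomial, spectrum, rank and nullity claims. The only difference is cosmetic (your $W$ and $U$ are the paper's $U$ and $W$), though you could add the one-line check that the two subspaces are orthogonal and their dimensions sum to $n$, which the paper verifies explicitly.
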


\begin{proof}
	Let
	$
	V(G)=V_1\sqcup V_2\sqcup\cdots\sqcup V_t,
	~ |V_i|=p_i,
	$
	be the canonical multipartite decomposition of $G$. Since $G$ is complete multipartite, no two vertices belonging to the same partite set are adjacent, whereas every vertex of $V_i$ is adjacent to every vertex of $V_j$ whenever $i\neq j$. Every vertex of $V_i$ has degree
	$
	d_i=n-p_i.
	$
	Therefore all edges joining $V_i$ and $V_j$ receive the common weight
	$
	\phi(d_i,d_j).
	$
	With the vertices ordered according to $V_1,\ldots,V_t$, the matrix $\dot A(G)$ has block form
	$$
	\dot A(G)
	=
	\begin{pmatrix}
		0_{p_1}
		&
		\phi(d_1,d_2)J_{p_1\times p_2}
		&
		\cdots
		&
		\phi(d_1,d_t)J_{p_1\times p_t}
		\\
		\phi(d_2,d_1)J_{p_2\times p_1}
		&
		0_{p_2}
		&
		\cdots
		&
		\phi(d_2,d_t)J_{p_2\times p_t}
		\\
		\vdots
		&
		\vdots
		&
		\ddots
		&
		\vdots
		\\
		\phi(d_t,d_1)J_{p_t\times p_1}
		&
		\phi(d_t,d_2)J_{p_t\times p_2}
		&
		\cdots
		&
		0_{p_t}
	\end{pmatrix}.
	$$
	
	For each $i\in\{1,\ldots,t\}$, define
	$$
	U_i
	=
	\left\{
	x\in\mathbb R^{V(G)}:
	\operatorname{supp}(x)\subseteq V_i,\ 
	\sum_{u\in V_i}x_u=0
	\right\}.
	$$
	The condition that the coordinates of $x$ on $V_i$ sum to zero imposes one independent linear relation on a $p_i$-dimensional coordinate space. Hence
	$
	\dim U_i=p_i-1.
	$
	
	We first show that every vector in $U_i$ belongs to the kernel of $\dot A(G)$. Let $x\in U_i$. If $v\in V_i$, then $x$ is supported entirely on $V_i$, while there are no edges inside $V_i$. Consequently,
	$
	(\dot A(G)x)_v=0.
	$
	Now let $v\in V_j$ with $j\neq i$. Since $v$ is adjacent to every vertex of $V_i$ and all these edges have weight $\phi(d_j,d_i)$,
	$$
	(\dot A(G)x)_v
	=
	\sum_{u\in V_i}\phi(d_j,d_i)x_u
	=
	\phi(d_j,d_i)\sum_{u\in V_i}x_u
	=
	0.
	$$
	Thus every coordinate of $\dot A(G)x$ is zero, and therefore
	$
	\dot A(G)x=0.
	$
	Hence
	$
	U_i\subseteq\ker\dot A(G)
	$
	for every $i$.
	
	The subspaces $U_1,\ldots,U_t$ have pairwise disjoint supports. In particular, they are mutually orthogonal. Therefore their sum is direct, and if
	$$
	U=\bigoplus_{i=1}^t U_i,
	$$
	then
	$
	U\subseteq\ker\dot A(G)
	$
	and
	$$
	\dim U
	=
	\sum_{i=1}^t(p_i-1)
	=
	\sum_{i=1}^t p_i-t
	=
	n-t.
	$$
	This already explains the forced occurrence of at least $n-t$ zero eigenvalues.
	
	To identify the remaining spectral information, define the normalized characteristic vector of $V_i$ by
	$$
	z_i
	=
	\frac{1}{\sqrt{p_i}}\mathbf 1_{V_i},
	\qquad i=1,\ldots,t,
	$$
	and let
	$
	W=\operatorname{span}\{z_1,\ldots,z_t\}.
	$
	Since the sets $V_i$ are pairwise disjoint,
	$
	\langle z_i,z_j\rangle=0$ for $i\neq j,$
	while
	$$
	\|z_i\|^2
	=
	\frac{1}{p_i}\langle\mathbf 1_{V_i},\mathbf 1_{V_i}\rangle
	=
	1.
	$$
	Thus
	$
	z_1,\ldots,z_t
	$
	form an orthonormal basis of $W$, and
	$
	\dim W=t.
	$
	
	Next, we verify that $U$ and $W$ are orthogonal complements. If $x\in U_i$, then
	$$
	\langle x,z_i\rangle
	=
	\frac{1}{\sqrt{p_i}}
	\sum_{u\in V_i}x_u
	=
	0.
	$$
	Moreover, for $j\neq i$, the supports of $x$ and $z_j$ are disjoint, so
	$
	\langle x,z_j\rangle=0.
	$
	Hence
	$
	U\perp W.
	$
	Since
	$
	\dim U+\dim W=(n-t)+t=n,
	$
	we obtain the orthogonal decomposition
	$
	\mathbb R^{V(G)}=U\oplus W.
	$
	
	It remains to determine the action of $\dot A(G)$ on $W$. Fix $j\in\{1,\ldots,t\}$. Since $z_j$ is supported on $V_j$, there is no contribution from the diagonal block corresponding to $V_j$. For $i\neq j$, every coordinate of $\dot A(G)z_j$ belonging to $V_i$ equals
	$$
	\phi(d_i,d_j)
	\sum_{u\in V_j}\frac{1}{\sqrt{p_j}}
	=
	\phi(d_i,d_j)\frac{p_j}{\sqrt{p_j}}
	=
	\sqrt{p_j}\,\phi(d_i,d_j).
	$$
	Therefore the $V_i$-component of $\dot A(G)z_j$ is
	$
	\sqrt{p_j}\,\phi(d_i,d_j)\mathbf 1_{V_i}.
	$
	Since
	$
	\mathbf 1_{V_i}=\sqrt{p_i}\,z_i,
	$
	this can be written as
	$
	\sqrt{p_ip_j}\,\phi(d_i,d_j)z_i.
	$
	Hence
	$$
	\dot A(G)z_j
	=
	\sum_{\substack{i=1\\i\neq j}}^t
	\sqrt{p_ip_j}\,\phi(d_i,d_j)z_i.
	$$
	In particular, $\dot A(G)z_j\in W$, so $W$ is invariant under $\dot A(G)$.
	 Relative to the orthonormal basis
	$
	z_1,\ldots,z_t,
	$
	the matrix of the restriction $\dot A(G)|_W$ is 
	$
	S=(s_{ij})_{t\times t},
	$
	where
	$
	s_{ii}=0
	$
	and
	$$
	s_{ij}
	=
	\sqrt{p_ip_j}\,\phi(d_i,d_j)
	\qquad(i\neq j).
	$$
	Since $\phi$ is symmetric,
	$
	\phi(d_i,d_j)=\phi(d_j,d_i),
	$
	and consequently
	$$
	s_{ij}
	=
	\sqrt{p_ip_j}\,\phi(d_i,d_j)
	=
	\sqrt{p_jp_i}\,\phi(d_j,d_i)
	=
	s_{ji}.
	$$
	Thus $S$ is real symmetric.
	
	Choose an orthonormal basis of each $U_i$, combine these bases to obtain an orthonormal basis of $U$, and append the vectors
	$
	z_1,\ldots,z_t.
	$
	With respect to the resulting orthonormal basis of $\mathbb R^{V(G)}$, the operator $\dot A(G)$ has block diagonal matrix
	$
	0_{n-t}\oplus S.
	$
	Equivalently, there exists an orthogonal matrix $P$ such that
	$$
	P^{\top}\dot A(G)P
	=
	\begin{pmatrix}
		0_{n-t}&0\\
		0&S
	\end{pmatrix}.
	$$
	Thus $\dot A(G)$ is orthogonally similar to $0_{n-t}\oplus S$. It follows that
	$$
	\chi_{\dot A(G)}(x)
	=
	\det\left(xI_n-(0_{n-t}\oplus S)\right).
	$$
	Using the determinant of a block diagonal matrix,
	$$
	\chi_{\dot A(G)}(x)
	=
	\det(xI_{n-t})\det(xI_t-S)
	=
	x^{n-t}\det(xI_t-S).
	$$
	This proves the asserted characteristic-polynomial factorization.
	
	The same orthogonal decomposition gives the spectrum, including multiplicities:
	$$
	\spec(\dot A(G))
	=
	\{0^{[n-t]}\}\uplus\spec(S).
	$$
	Notice that the multiset notation is important here. If $S$ itself is singular, additional zero eigenvalues arise from $\spec(S)$ and must be added to the forced multiplicity $n-t$.
	
	We now connect $S$ with the equitable quotient $M$. Since
	$
	D^{1/2}
	=
	\operatorname{diag}(\sqrt{p_1},\ldots,\sqrt{p_t})
	$
	and
	$$
	D^{-1/2}
	=
	\operatorname{diag}
	\left(
	\frac{1}{\sqrt{p_1}},\ldots,
	\frac{1}{\sqrt{p_t}}
	\right),
	$$
	the $(i,j)$-entry of $D^{1/2}MD^{-1/2}$, for $i\neq j$, is
	$$
	\sqrt{p_i}
	\left(
	p_j\phi(d_i,d_j)
	\right)
	\frac{1}{\sqrt{p_j}}
	=
	\sqrt{p_ip_j}\,\phi(d_i,d_j).
	$$
	The diagonal entries remain zero. Therefore
	$
	D^{1/2}MD^{-1/2}=S.
	$
	As every $p_i$ is positive, $D^{1/2}$ is invertible. Hence $M$ and $S$ are similar, and consequently
	$
	\spec(M)=\spec(S).
	$
	This implies that
	$$
	\spec(\dot A(G))
	=
	\{0^{[n-t]}\}\uplus\spec(S)
	=
	\{0^{[n-t]}\}\uplus\spec(M).
	$$
	
	The rank formula is now immediate from the orthogonal block decomposition. Since
	$
	\dot A(G)\sim 0_{n-t}\oplus S,
	$
	similarity preserves rank and
	$
	\operatorname{rank}(0_{n-t}\oplus S)
	=
	\operatorname{rank}S.
	$
	Therefore
	$
	\operatorname{rank}\dot A(G)
	=
	\operatorname{rank}S.
	$
	
	Likewise,
	$
	\ker(0_{n-t}\oplus S)
	=
	\mathbb R^{n-t}\oplus\ker S.
	$
	Consequently,
	$
	\dim\ker\dot A(G)
	=
	n-t+\dim\ker S.
	$
	Since $\dot A(G)$ is real symmetric, it is diagonalizable, and hence the dimension of its kernel equals the algebraic multiplicity of the eigenvalue $0$. Thus
	$$
	\operatorname{mult}_{\dot A(G)}(0)
	=
	n-t+\dim\ker S.
	$$
	It follows that
	$
	\operatorname{mult}_{\dot A(G)}(0)=n-t
	$
	if and only if
	$
	\dim\ker S=0.
	$
	For a square matrix this is equivalent to
	$
	\det S\neq0,
	$
	that is, to the nonsingularity of $S$. This completes the proof.
\end{proof}

The following example illustrates the above result.
\begin{example}\label{ex:extra_zero_multipartite}
	The term $n-t$ in Theorem~\ref{thm:spectrum_complete_multipartite_updated} need not be the full multiplicity of $0$. Consider $G=K_{4,3,2,1}$, so $n=10$, $t=4$, and the four degrees are $6,7,8,9$. Choose a positive symmetric degree-weight function on the six relevant degree pairs so that
	$
	\sqrt{p_ip_j}\phi(d_i,d_j)=1
	$
	for every pair except the pair joining the parts of sizes $2$ and $1$, for which
	$
	\sqrt{2}\phi(8,9)=4.
	$
	Then the symmetric quotient is
	$$
	S=
	\begin{pmatrix}
		0&1&1&1\\
		1&0&1&1\\
		1&1&0&4\\
		1&1&4&0
	\end{pmatrix},
	$$
	and
	$
	\spec(S)=\{5,0,-1,-4\}.
	$
	Hence
	$$
	\spec(\dot A(G))=\{5,0^{[7]},-1,-4\}.
	$$
	Here $n-t=6$, but the actual zero multiplicity is $7$.
\end{example}

The following consequences are immediate from Theorem \ref{thm:spectrum_complete_multipartite_updated}.
\begin{corollary}\label{cor:bipartite_weighted_spectrum}
	Let $G=K_{p,q}$ with $p,q\ge 1$, $d_1=q$, $d_2=p$ and
	$
	a=\sqrt{pq}\,\phi(p,q).
	$
	Then
	$
	\spec(\dot A(G))=\{0^{[p+q-2]},a,-a\}.
	$
	Consequently,
	$
	\dot E_\phi(G)=2|a|,
	$
	and, when $\phi(p,q)>0$,
	$
	\rho(\dot A(G))=a.
	$
	The weighted spectrum is integral if and only if $a\in\mathbb Z$.
\end{corollary}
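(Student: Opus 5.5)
We apply Theorem~\ref{thm:spectrum_complete_multipartite_updated} with $t=2$, $p_1=p$, $p_2=q$, so $n=p+q$, $d_1=n-p=q$ and $d_2=n-q=p$. The symmetric quotient then has zero diagonal and off-diagonal entry $s_{12}=s_{21}=\sqrt{pq}\,\phi(d_1,d_2)=\sqrt{pq}\,\phi(q,p)$. By the standing symmetry of $\phi$ this equals $a$, so
$$
S=\begin{pmatrix}0&a\\ a&0\end{pmatrix}.
$$
Its characteristic polynomial is $x^2-a^2$, hence $\spec(S)=\{a,-a\}$. The theorem then gives $\spec(\dot A(G))=\{0^{[p+q-2]}\}\uplus\{a,-a\}$ directly.

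For the energy, we sum absolute values. The zero eigenvalues contribute nothing and $|a|+|-a|=2|a|$, so $\dot E_\phi(G)=2|a|$. This multiset formula holds even if $a=0$, although $a\neq 0$ under the standing nonzero assumption on $\phi$.

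When $\phi(p,q)>0$, we have $a>0$, so the largest eigenvalue is $a$ and the largest absolute value among $\{0,a,-a\}$ is also $a$. Hence $\rho(\dot A(G))=a$, consistent with Perron--Frobenius since $K_{p,q}$ is connected.

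For integrality, the spectrum consists of $0$, $a$ and $-a$, so it lies in $\mathbb Z$ exactly when $a\in\mathbb Z$. There is no real obstacle; the only point needing care is matching the degrees, namely that vertices in the part of size $p$ have degree $q$, which symmetry of $\phi$ makes harmless.
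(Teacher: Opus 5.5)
Your proposal is correct and matches the paper's approach: the paper gives no separate proof and presents this corollary as immediate from Theorem~\ref{thm:spectrum_complete_multipartite_updated} with $t=2$. In that case the symmetric quotient is $S=\begin{pmatrix}0&a\\ a&0\end{pmatrix}$, exactly as you set it up, and the spectrum, energy, spectral radius and integrality claims follow as you argue.
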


\begin{corollary}\label{thm:tripartite_cubic_inertia}
	Let $G=K_{p,q,r}$ and assume that the relevant weights are positive. Let $n=p+q+r$ and
	$ a=\sqrt{pq}\,\phi(n-p,n-q),		b=\sqrt{pr}\,\phi(n-p,n-r),$ and $
		c=\sqrt{qr}\,\phi(n-q,n-r). $
	Then
	$
	\chi_{\dot A(G)}(x)
	=x^{n-3}\left(x^3-(a^2+b^2+c^2)x-2abc\right).
	$
	Moreover, $0$ has multiplicity exactly $n-3$, the three nonzero eigenvalues consist of one positive and two negative numbers, and
	$
	\dot E_\phi(G)=2\rho(\dot A(G)).
	$
	If $n>3$, then $\dot A(G)$ has exactly three distinct eigenvalues if and only if
	$
	a=b=c.
	$
	In that case, writing the common value as $c_0>0$, and
	$
	\spec(\dot A(G))=\{0^{[n-3]},2c_0,(-c_0)^{[2]}\}.
	$
\end{corollary}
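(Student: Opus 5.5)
The plan is to apply Theorem~\ref{thm:spectrum_complete_multipartite_updated} with $t=3$. This gives $\chi_{\dot A(G)}(x)=x^{n-3}\det(xI_3-S)$, where
$$
S=\begin{pmatrix}0&a&b\\ a&0&c\\ b&c&0\end{pmatrix}.
$$
Expanding the $3\times 3$ determinant directly yields $\det(xI_3-S)=x^3-(a^2+b^2+c^2)x-2abc$, which is the stated factorization. Since the weights are positive, $a,b,c>0$, so $\det S=2abc\neq 0$. The final clause of the theorem then shows that $0$ has multiplicity exactly $n-3$.

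For the sign pattern, I will use trace and determinant of $S$. We have $\operatorname{Tr}S=0$ and $\det S=2abc>0$, and the eigenvalues $\mu_1\ge\mu_2\ge\mu_3$ are real and nonzero. Their product is positive, so either all three are positive or exactly two are negative. The first option is impossible because the trace is zero. Hence the inertia of $S$ is $(1,2,0)$. Alternatively, Perron--Frobenius applied to the positive irreducible $S$ gives $\mu_1=\rho>0$ directly. Because the trace vanishes, $\mu_2+\mu_3=-\mu_1$, and so
$$
\dot E_\phi(G)=\mu_1-\mu_2-\mu_3=2\mu_1=2\rho(\dot A(G)).
$$
Here $\rho(\dot A(G))=\rho(S)=\mu_1$, since the other eigenvalues of $\dot A(G)$ are zero.

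For the three-distinct-eigenvalue criterion with $n>3$, $0$ is genuinely an eigenvalue of $\dot A(G)$. The three nonzero eigenvalues are one positive and two negative, so $\dot A(G)$ has exactly three distinct eigenvalues if and only if $\mu_2=\mu_3$. This is the step needing care. I plan to use the fact that a real symmetric matrix with a double eigenvalue $\mu$ has $S-\mu I$ of rank $1$. For $S-\mu I$ the off-diagonal entries are $a,b,c$ and the diagonal entries are $-\mu$. Vanishing of the $2\times2$ minors gives $\mu^2=a^2=b^2=c^2$ from the principal minors, and $-\mu c=ab$, $-\mu b=ac$, $-\mu a=bc$ from the off-diagonal ones. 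Positivity of $a,b,c$ then forces $a=b=c=-\mu$.

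A cleaner alternative is the discriminant: $\mu_2=\mu_3$ if and only if the cubic has a repeated root, that is, $4(a^2+b^2+c^2)^3=27\cdot 4a^2b^2c^2$. By AM--GM, $\left(\frac{a^2+b^2+c^2}{3}\right)^3\ge a^2b^2c^2$ with equality exactly when $a=b=c$. This is the argument I would write, since it also explains why the discriminant is never negative.

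Conversely, if $a=b=c=c_0$, the cubic factors as $x^3-3c_0^2x-2c_0^3=(x-2c_0)(x+c_0)^2$. Together with the $n-3$ zeros, this gives the stated spectrum $\{0^{[n-3]},2c_0,(-c_0)^{[2]}\}$.
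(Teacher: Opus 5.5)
Your proposal is correct and follows essentially the same route as the paper: you reduce to the symmetric quotient $S$ via Theorem~\ref{thm:spectrum_complete_multipartite_updated}, expand $\det(xI_3-S)$, use $\operatorname{Tr}S=0$ and $\det S=2abc>0$ to get one positive and two negative eigenvalues, and settle the three-eigenvalue case with the discriminant and AM--GM. Your rank-one minor argument is also a valid way to handle the repeated-root step, but the discriminant version you chose is exactly the paper's.
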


\begin{proof}
	The symmetric quotient is
	$$
	S=\begin{pmatrix}
		0&a&b\\
		a&0&c\\
		b&c&0
	\end{pmatrix}.
	$$
	A direct determinant expansion gives
	$$
	\det(xI_3-S)=x^3-(a^2+b^2+c^2)x-2abc.
	$$
	Since
	$
	\det S=2abc>0,
	$
	the matrix $S$ is nonsingular. Since $S$ is irreducible and entrywise positive off the diagonal, its largest eigenvalue is positive and simple. Since $\operatorname{Tr}S=0$ and $\det S>0$, the other two eigenvalues must both be negative. Thus the energy equals twice the unique positive eigenvalue. For the last assertion, the discriminant of the cubic is
	$$
	4\left((a^2+b^2+c^2)^3-27a^2b^2c^2\right).
	$$
	By the arithmetic--geometric mean inequality this quantity is nonnegative, and it vanishes exactly when $a^2=b^2=c^2$. Also, the positivity of $a,b,c$  gives $a=b=c$. In that case $S=c_0(J_3-I_3)$, whose spectrum is $\{2c_0,-c_0,-c_0\}$.
\end{proof}

The following result gives the three distinct eigenvalues of complete multipartite graphs.

\begin{theorem}\label{cor:regular_complete_multipartite_updated}
	Let $G=K_{p_1,\ldots,p_t}$ with $t\geq 3$, $n>t$, and suppose that $\phi(d_i,d_j)>0$ for all $i\neq j$. Assume that the symmetric quotient $S$ in Theorem~\ref{thm:spectrum_complete_multipartite_updated} is nonsingular. Then $\dot A(G)$ has exactly three distinct eigenvalues if and only if there exists a constant $c>0$ such that
	$
	\sqrt{p_ip_j}\,\phi(d_i,d_j)=c
	$
	for every $i\neq j$. In this case,
	$
	\spec(\dot A(G))
	=
	\left\{
	0^{[n-t]},
	((t-1)c)^{[1]},
	(-c)^{[t-1]}
	\right\}.
	$
	Every regular complete multipartite graph satisfies this condition, although the converse need not hold for a general degree-based weight.
\end{theorem}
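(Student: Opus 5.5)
By Theorem~\ref{thm:spectrum_complete_multipartite_updated}, $\spec(\dot A(G))=\{0^{[n-t]}\}\uplus\spec(S)$. Since $S$ is nonsingular, $0\notin\spec(S)$, and since $n>t$, zero really occurs. So $\dot A(G)$ has exactly three distinct eigenvalues if and only if $S$ has exactly two distinct eigenvalues. The task therefore reduces entirely to the $t\times t$ symmetric matrix $S$, which has zero diagonal and positive off-diagonal entries $s_{ij}=\sqrt{p_ip_j}\,\phi(d_i,d_j)$.

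For sufficiency, suppose $s_{ij}=c$ for all $i\neq j$. Then $S=c(J_t-I_t)$. The spectrum of $J_t$ is $\{t,0^{[t-1]}\}$, so $\spec(S)=\{(t-1)c,(-c)^{[t-1]}\}$. Combined with the $0^{[n-t]}$ block, this gives the displayed spectrum, and since $c>0$ and $t\ge3$ these are three distinct values.

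For necessity, suppose $S$ has exactly two distinct eigenvalues $\theta_1>\theta_2$. Because $S$ is irreducible and nonnegative, Perron--Frobenius makes $\theta_1$ simple with a positive eigenvector $x$, so $\theta_2$ has multiplicity $t-1$. Since $S$ is symmetric, hence diagonalizable, $(S-\theta_1I)(S-\theta_2I)=0$. This forces $S-\theta_2 I=(\theta_1-\theta_2)\,xx^{\top}$ with $\|x\|=1$. The diagonal entries give $-\theta_2=(\theta_1-\theta_2)x_i^2$ for every $i$, so all $x_i$ are equal (using $x_i>0$), i.e.\ $x=\mathbf 1/\sqrt t$. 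The off-diagonal entries then give $s_{ij}=(\theta_1-\theta_2)/t=:c>0$ for all $i\neq j$, which is the constant condition. The one point needing care is the positivity of $x$, which rules out sign patterns; this is exactly where Perron--Frobenius and $\phi>0$ are used. An alternative is to compare $\operatorname{Tr}S=0$ and $\operatorname{Tr}S^2$, but the rank-one argument is cleaner.

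For the final remarks: if $p_i=p$ for all $i$, then every $d_i=n-p$, so $s_{ij}=p\,\phi(n-p,n-p)$ is constant. This constant is positive, and $S=c(J_t-I_t)$ is nonsingular because $-c$ and $(t-1)c$ are nonzero. So regular complete multipartite graphs satisfy the hypotheses and the condition. To show the converse fails, I would exhibit the $ISI$ example $K_{9b,b,b}$ from the introduction. Here $n=11b$ and the degrees are $2b,10b,10b$. We get $\sqrt{9b\cdot b}\,\frac{20b^2}{12b}=3b\cdot\frac{5b}{3}=5b$ and $\sqrt{b\cdot b}\,\frac{100b^2}{20b}=5b$, so all off-diagonal entries equal $5b$ even though the graph is not regular.
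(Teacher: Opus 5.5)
Your proof is correct and takes the same route as the paper. You reduce to $S$ via Theorem~\ref{thm:spectrum_complete_multipartite_updated}, use the positive Perron vector to write $S-\theta_2I=(\theta_1-\theta_2)xx^{\top}$, and let the zero diagonal force $x=\mathbf 1/\sqrt t$. Reading $c=(\theta_1-\theta_2)/t$ straight off the off-diagonal entries simply bypasses the paper's trace step.

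One harmless arithmetic slip: in your $K_{9b,b,b}$ check both quotient entries equal $5b^2$, not $5b$ (e.g.\ $3b\cdot\frac{5b}{3}=5b^2$). They are still equal, which matches Proposition~\ref{prop:isi_9bb}.
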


\begin{proof}
	Recall from Theorem~\ref{thm:spectrum_complete_multipartite_updated} that the nonzero part of the spectrum of $\dot A(G)$ is completely determined by the symmetric quotient
	$
	S=(s_{ij})_{t\times t},$ with $
	s_{ii}=0, 
	s_{ij}=\sqrt{p_ip_j}\,\phi(d_i,d_j)
	$ for $i\neq j.$
	Moreover,
	$
	\spec(\dot A(G))
	=
	\{0^{[n-t]}\}\uplus\spec(S).
	$
	Since $S$ is assumed to be nonsingular, $0\notin\spec(S)$. Consequently, the multiplicity of $0$ as an eigenvalue of $\dot A(G)$ is exactly $n-t$. 
	
	Suppose first that there exists a constant $c>0$ such that
	$
	\sqrt{p_ip_j}\,\phi(d_i,d_j)=c
	$
	for every $i\neq j$. Then every off-diagonal entry of $S$ is equal to $c$, whereas its diagonal entries vanish. Hence
	$
	S=c(J_t-I_t).
	$
	The all-ones vector $\mathbf 1$ satisfies
	$
	(J_t-I_t)\mathbf 1=(t-1)\mathbf 1,
	$
	and therefore $(t-1)c$ is an eigenvalue of $S$. On the other hand, if $x\perp\mathbf 1$, then $J_tx=0$, and hence
	$$
	Sx
	=
	c(J_t-I_t)x
	= 
	-cx.
	$$
	The subspace $\mathbf 1^{\perp}$ has dimension $t-1$, so $-c$ has multiplicity $t-1$. Thus
	$
	\spec(S)
	=
	\left\{
		((t-1)c)^{[1]},
		(-c)^{[t-1]}
		\right\}.
	$
	Combining this with the $n-t$ zero eigenvalues supplied by the partite classes yields
	$
	\spec(\dot A(G))
	=
	\left\{
		0^{[n-t]},
		((t-1)c)^{[1]},
		(-c)^{[t-1]}
		\right\}.
	$
	Since $n>t$, the eigenvalue $0$ is present, and because $c>0$ and $t\geq3$, the three numbers
	$
	0, -c, (t-1)c
	$
	are pairwise distinct. Hence $\dot A(G)$ has exactly three distinct eigenvalues. 
	Conversely, assume that $\dot A(G)$ has exactly three distinct eigenvalues. Since $n>t$, Theorem~\ref{thm:spectrum_complete_multipartite_updated} shows that $0$ is an eigenvalue of $\dot A(G)$. Since $S$ is nonsingular, none of the eigenvalues of $S$ is zero. Therefore the remaining two distinct eigenvalues of $\dot A(G)$ are precisely the two distinct eigenvalues of $S$. We now use the positivity assumption. For $i\neq j$,
	$
	s_{ij}
	=
	\sqrt{p_ip_j}\,\phi(d_i,d_j)>0.
	$
	Thus $S$ is a nonnegative irreducible symmetric matrix. Indeed, its associated graph is the complete graph $K_t$, because every off-diagonal entry is positive. By the Perron--Frobenius theorem, its spectral radius is a positive simple eigenvalue. Denote this eigenvalue by $\lambda$, so that
	$
	\lambda=\rho(S)>0.
	$
	Let $v$ be the corresponding unit Perron eigenvector. We may choose
	$
	v=(v_1,\ldots,v_t)^{\top}
	$
	with
	$
	v_i>0
	$
	for every $i$ and
	$
	v^{\top}v=1.
	$
	
	As $S$ has exactly two distinct eigenvalues and $\lambda$ is simple, there is a second eigenvalue $\mu$ whose multiplicity is $t-1$. Since $S$ is real symmetric, the eigenspaces corresponding to $\lambda$ and $\mu$ are orthogonal and span $\mathbb R^t$. Hence the orthogonal projection onto the $\lambda$-eigenspace is $vv^{\top}$, whereas the projection onto its orthogonal complement is $I_t-vv^{\top}$. Consequently,
	$$
	S
	=
	\lambda vv^{\top}
	+
	\mu(I_t-vv^{\top}),
	$$
	or equivalently,
	$$
	S
	=
	\mu I_t+(\lambda-\mu)vv^{\top}.
	$$
	
	Since the diagonal of $S$ is zero, its $i$-th diagonal entry satisfies
	$
	0
	=
	S_{ii}
	=
	\mu+(\lambda-\mu)v_i^2
	$
	for every $i=1,\ldots,t$. Therefore, we have
	$
	v_i^2
	=
	-\tfrac{\mu}{\lambda-\mu}
	$
	is independent of $i$. Since $v$ is a unit vector,
	$
	\sum_{i=1}^{t}v_i^2=1,
	$
	and hence
	$
	v_i^2=\tfrac1t
	$
	for every $i$. The Perron vector is strictly positive, so the negative choice is excluded and
	$
	v_i=\tfrac1{\sqrt t}
	$
	for all $i$. Thus
	$
	v=\tfrac1{\sqrt t}\mathbf 1.
	$
	
	It is useful at this point to determine the relation between $\lambda$ and $\mu$. Since every diagonal entry of $S$ is zero,
	$
	\Tr(S)=0.
	$
	On the other hand, the eigenvalues of $S$ are $\lambda$ once and $\mu$ with multiplicity $t-1$. Hence
	$
	\lambda+(t-1)\mu=0,
	$
	which gives
	$
	\mu=-\tfrac{\lambda}{t-1}.
	$
	In particular, $\mu<0$.
	
	Substituting $v=\tfrac1{\sqrt t}\mathbf 1$ into the spectral decomposition gives
	$
	S	=	\mu I_t	+	\tfrac{\lambda-\mu}{t}J_t.
	$
	Using
	$
	\mu=-\tfrac{\lambda}{t-1},
	$
	we obtain
	$$
	\frac{\lambda-\mu}{t}
	=
	\frac1t
	\left(
	\lambda+\frac{\lambda}{t-1}
	\right)
	=
	\frac{\lambda}{t-1}.
	$$
	Therefore
	$
	S
	=
	\tfrac{\lambda}{t-1}(J_t-I_t).
	$
	Let
	$
	c=\tfrac{\lambda}{t-1}.
	$
	Since $\lambda>0$ and $t\geq3$, we have $c>0$. Hence every off-diagonal entry of $S$ equals $c$. By the definition of the symmetric quotient,
	$
	\sqrt{p_ip_j}\,\phi(d_i,d_j)=c
	$
	for every $i\neq j$, which is the required condition.
	
	It remains to justify the final assertion concerning regular complete multipartite graphs. Suppose
	$
	G=K_{\underbrace{p,\ldots,p}_{t}}.
	$
	Then
	$
	p_1=\cdots=p_t=p
	$
	and every vertex has the same degree
	$
	d=n-p=p(t-1).
	$
	Consequently, for every $i\neq j$,
	$$
	\sqrt{p_ip_j}\,\phi(d_i,d_j)
	=
	\sqrt{p^2}\,\phi(d,d)
	=
	p\phi(d,d).
	$$
	Thus the required constant condition holds with
	$
	c=p\phi(d,d)>0.
	$
	Hence every regular complete multipartite graph satisfies the criterion. The converse, however, is not valid for arbitrary degree-based weight functions, because equality of the quantities
	$
	\sqrt{p_ip_j}\,\phi(d_i,d_j)
	$
	does not in general force the part sizes $p_1,\ldots,p_t$ to be equal. 
\end{proof}

\begin{remark}\label{rem:correction_three_eigs}
	The ordinary quotient $M$ used in an equitable partition has entries $M_{ij}=p_j\phi(d_i,d_j)$ and is not symmetric unless the part sizes happen to agree in the appropriate way. Thus a spectral decomposition of $M$ as if it were symmetric is not justified. More importantly, equality of the off-diagonal entries of the symmetric quotient $S$, not equality of the part sizes, is the correct general condition. The next two propositions show that irregular examples occur even for familiar degree-based matrices.
\end{remark}

\begin{proposition}\label{prop:isi_9bb}
	Let $G=K_{a,b,b}$ with $a,b\ge 1$, $a+2b>3$, and let
	$
	\phi(x,y)=\frac{xy}{x+y}
	$
	be the ISI weight. Then the ISI matrix of $G$ has exactly three distinct eigenvalues if and only if
	$
	a=b$ or $a=9b.
	$
	In particular, for every $b\ge 1$, the irregular graph $K_{9b,b,b}$ satisfies
	$$
	\spec(ISI(K_{9b,b,b}))
	=\{0^{[11b-3]},10b^2,(-5b^2)^{[2]}\}.
	$$
\end{proposition}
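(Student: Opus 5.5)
The plan is to reduce everything to Corollary~\ref{thm:tripartite_cubic_inertia}. Write $G=K_{a,b,b}$ with parts $V_1,V_2,V_3$ of sizes $a,b,b$, so $n=a+2b$. The hypothesis $a+2b>3$ is exactly the condition $n>3$ needed there. The ISI weight is positive, so the corollary says the ISI matrix has exactly three distinct eigenvalues if and only if the three off-diagonal entries of the symmetric quotient $S$ coincide. The whole problem therefore becomes one algebraic equation in $a$ and $b$.

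\textbf{Compute the entries of $S$.} Vertices of $V_1$ have degree $d_1=2b$, and vertices of $V_2,V_3$ have degree $d_2=d_3=a+b$.
\begin{itemize}
\item For an edge between $V_1$ and $V_2$ (or $V_1$ and $V_3$), the ISI weight is $\phi(2b,a+b)=\frac{2b(a+b)}{a+3b}$. Hence the two equal entries of $S$ are $\sqrt{ab}\,\frac{2b(a+b)}{a+3b}$.
\item For an edge between $V_2$ and $V_3$, the weight is $\phi(a+b,a+b)=\frac{a+b}{2}$. The third entry of $S$ is therefore $b\cdot\frac{a+b}{2}$.
\end{itemize}

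\textbf{Solve the equality condition.} Setting the entries equal and cancelling the positive factor $b(a+b)$ gives
\[
\frac{2\sqrt{ab}}{a+3b}=\frac12,\qquad\text{that is,}\qquad 4\sqrt{ab}=a+3b .
\]
Put $s=\sqrt{a/b}>0$ and divide by $b$. This gives $s^2-4s+3=0$, so $(s-1)(s-3)=0$. Hence $s=1$ or $s=3$, which means $a=b$ or $a=9b$. Each step is reversible, so this proves the equivalence.

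\textbf{Read off the spectrum for $a=9b$.} Here $n=11b$. The common off-diagonal value is $c_0=b\cdot\frac{10b}{2}=5b^2$. By the last clause of Corollary~\ref{thm:tripartite_cubic_inertia}, the spectrum is $\{0^{[11b-3]},\,10b^2,\,(-5b^2)^{[2]}\}$. As a check, the $V_1$--$V_2$ entry is $3b\cdot\frac{2b\cdot 10b}{12b}=5b^2$, which agrees.

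\textbf{Expected obstacle.} There is no real obstacle. The only care needed is the cancellation, which is valid because $b(a+b)>0$, and noting that the corollary already supplies the nonsingularity of $S$, so no separate check is required.
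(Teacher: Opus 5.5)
Your proposal is correct and follows the paper's proof essentially step for step: you compute the two distinct off-diagonal entries of the symmetric quotient, invoke Corollary~\ref{thm:tripartite_cubic_inertia} to reduce the question to their equality, cancel $b(a+b)$ to get $4\sqrt{ab}=a+3b$, and solve the quadratic in $\sqrt{a/b}$. Your explicit remarks that $a+2b>3$ is exactly the $n>3$ hypothesis of the corollary, and that each step is reversible, are small but welcome additions the paper leaves implicit.
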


\begin{proof}
	The degree in the part of size $a$ is $2b$, while the degree in either part of size $b$ is $a+b$. The two types of off-diagonal entries of the symmetric quotient are
	$$
	x=\sqrt{ab}\,\frac{2b(a+b)}{a+3b}
	\quad 
	\text{and}
	\quad
	y=b\,\frac{a+b}{2}.
	$$
	By Corollary~\ref{thm:tripartite_cubic_inertia}, exactly three distinct eigenvalues occur precisely when $x=y$. Cancelling the positive factor $b(a+b)$ gives
	$
	4\sqrt{ab}=a+3b.
	$
	With $z=\sqrt{a/b}$ this becomes
	$
	z^2-4z+3=0,
	$
	so $z=1$ or $z=3$. Hence $a=b$ or $a=9b$. If $a=9b$, then $x=y=5b^2$, and the displayed spectrum follows.
\end{proof}

\begin{proposition}\label{prop:m2_4bbb}
	Let $G=K_{a,b,b,b}$ with $a,b\ge 1$, and let $\phi(x,y)=xy$ be the second Zagreb weight. Then the corresponding weighted adjacency matrix has exactly three distinct eigenvalues if and only if
	$
	a=b$ or $a=4b.
	$
	For the irregular family $K_{4b,b,b,b}$,
	$$
	\spec(M2(K_{4b,b,b,b}))
	=\{0^{[7b-4]},108b^3,(-36b^3)^{[3]}\}.
	$$
\end{proposition}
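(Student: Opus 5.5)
Two steps: first compute the symmetric quotient $S$ of Theorem~\ref{thm:spectrum_complete_multipartite_updated} and find its spectrum in closed form, then read off when only two distinct nonzero values occur. In $K_{a,b,b,b}$ a vertex of the part of size $a$ has degree $3b$, and a vertex of any part of size $b$ has degree $a+2b$. With the second Zagreb weight $\phi(x,y)=xy$, the off-diagonal entries of $S$ are of two types:
$$
x=\sqrt{ab}\cdot 3b(a+2b)\quad\text{(between the $a$-part and a $b$-part)},\qquad
y=b(a+2b)^2\quad\text{(between two $b$-parts)}.
$$
Thus $S=\begin{pmatrix}0&x\mathbf 1^{\top}\\ x\mathbf 1&y(J_3-I_3)\end{pmatrix}$.

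I will not invoke Theorem~\ref{cor:regular_complete_multipartite_updated} blindly, since its nonsingularity hypothesis has to be checked anyway. Instead I diagonalize $S$ directly.
\begin{itemize}
\item Vectors $(0,w)$ with $w\perp\mathbf 1_3$ are eigenvectors of $S$ with eigenvalue $-y$, which has multiplicity $2$.
\item On the invariant complement spanned by $e_1$ and $\tfrac{1}{\sqrt3}(0,1,1,1)$, $S$ acts as $\begin{pmatrix}0&\sqrt3x\\ \sqrt3x&2y\end{pmatrix}$. Its eigenvalues are $y\pm\sqrt{y^2+3x^2}$.
\end{itemize}
Since the $2\times2$ block has determinant $-3x^2\neq0$ and $y>0$, $S$ is nonsingular. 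Hence
$$
\spec(S)=\{y+\sqrt{y^2+3x^2},\ y-\sqrt{y^2+3x^2},\ (-y)^{[2]}\},
$$
and $\spec(M2(K_{a,b,b,b}))=\{0^{[n-4]}\}\uplus\spec(S)$ with $n=a+3b$.

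Now count distinct eigenvalues. Whenever $n>4$, the value $0$ occurs and is not an eigenvalue of $S$. The number $y+\sqrt{y^2+3x^2}$ is positive, while $y-\sqrt{y^2+3x^2}$ and $-y$ are negative. So there are exactly three distinct eigenvalues if and only if $y-\sqrt{y^2+3x^2}=-y$. This is equivalent to $3y^2=3x^2$, i.e. $x=y$, since both are positive. Cancelling the positive factor $b(a+2b)$ reduces $x=y$ to $3\sqrt{ab}=a+2b$. Setting $z=\sqrt{a/b}$ gives $z^2-3z+2=0$, so $z\in\{1,2\}$, i.e. $a=b$ or $a=4b$.

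For $a=4b$ we get $x=y=b(6b)^2=36b^3$. The spectrum of $S$ is then $\{3\cdot36b^3,(-36b^3)^{[3]}\}=\{108b^3,(-36b^3)^{[3]}\}$, and the zero multiplicity is $n-4=7b-4$. This gives the displayed spectrum.

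The only delicate point is the degenerate case $a=b=1$. There $n=t=4$, so $0$ is absent and $K_4$ has only two distinct eigenvalues. I expect to handle it by reading the statement, as in Proposition~\ref{prop:isi_9bb}, under the implicit hypothesis $n>4$ (i.e. $a+3b>4$), and I will note this explicitly.
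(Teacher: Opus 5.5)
Your proof is correct and follows the paper's argument essentially verbatim: you use the same entries $x=3b(a+2b)\sqrt{ab}$ and $y=b(a+2b)^2$, the same double eigenvalue $-y$ on vectors $(0,w)$ with $w\perp\mathbf 1_3$, the same eigenvalues $y\pm\sqrt{y^2+3x^2}$ of the remaining $2\times 2$ block, and the same reduction of $x=y$ to $z^2-3z+2=0$ with $z=\sqrt{a/b}$, adding only an explicit check that $S$ is nonsingular. Your caveat about $a=b=1$ is right and catches a genuine omission in the statement: $M2(K_4)=9(J_4-I_4)$ has only the two distinct eigenvalues $27$ and $-9$, so the ``if'' direction needs the hypothesis $a+3b>4$, just as Proposition~\ref{prop:isi_9bb} assumes $a+2b>3$.
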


\begin{proof}
	The degree in the part of size $a$ is $3b$, and the degree in each of the other three parts is $a+2b$. Let
	$
	x=3b(a+2b)\sqrt{ab},$ and $y=b(a+2b)^2.
	$
	Then the symmetric quotient is
	$$
	S=\begin{pmatrix}
		0&x&x&x\\
		x&0&y&y\\
		x&y&0&y\\
		x&y&y&0
	\end{pmatrix}.
	$$
	The value $-y$ is an eigenvalue of multiplicity $2$, coming from vectors supported on the last three coordinates whose sum is zero. On the remaining two-dimensional invariant subspace the eigenvalues are
	$
	y\pm\sqrt{y^2+3x^2}.
	$
	These three reduced values collapse to two distinct eigenvalues exactly when
	$
	y-\sqrt{y^2+3x^2}=-y,
	$
	which is equivalent to $x=y$. After cancellation, we get
	$
	3\sqrt{ab}=a+2b.
	$
	Putting $z=\sqrt{a/b}$ gives $z^2-3z+2=0$, hence $a=b$ or $a=4b$. For $a=4b$, we obtain $x=y=36b^3$, and therefore $S=36b^3(J_4-I_4)$. Thus the results follows.
\end{proof}

The regular case remains particularly simple and is worth recording in a form that also gives the energy.

\begin{corollary}\label{cor:regular_multipartite_spectrum_updated}
	Let $G=K_{p,\ldots,p}$ be a complete $t$-partite graph, where $t\ge 2$, $p\ge 1$, $n=pt$, $d=p(t-1)$, and $w=\phi(d,d)$. Then
	$$
	\spec(\dot A(G))
	=\{0^{[t(p-1)]},((t-1)pw)^{[1]},(-pw)^{[t-1]}\}.
	$$
	If $w>0$, then
	$
	\rho(\dot A(G))=(t-1)pw
	$
	and
	$
	\dot E_\phi(G)=2(t-1)pw.
	$
	For $p\ge 2$, the matrix has exactly three distinct eigenvalues.
\end{corollary}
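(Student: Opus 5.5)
We specialize Theorem~\ref{thm:spectrum_complete_multipartite_updated} to $p_1=\cdots=p_t=p$. Then $n=pt$, and every vertex has the same degree $d_i=n-p=p(t-1)=d$. Hence every off-diagonal entry of the symmetric quotient equals
\[
s_{ij}=\sqrt{p\cdot p}\,\phi(d,d)=pw ,
\]
so that $S=pw(J_t-I_t)$. As in the first half of the proof of Theorem~\ref{cor:regular_complete_multipartite_updated}, $\mathbf 1$ is an eigenvector of $J_t-I_t$ with eigenvalue $t-1$. The $(t-1)$-dimensional space $\mathbf 1^{\perp}$ consists of eigenvectors with eigenvalue $-1$. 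Therefore
\[
\spec(S)=\{((t-1)pw)^{[1]},(-pw)^{[t-1]}\}.
\]
Theorem~\ref{thm:spectrum_complete_multipartite_updated} then gives the displayed spectrum, with the forced zero multiplicity $n-t=t(p-1)$. This argument works for any real $w$, including $w=0$, since the union is a multiset union. If $w=0$ the matrix is zero and the formula still holds.

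Now assume $w>0$. The eigenvalues are $(t-1)pw>0$, then $0$, then $-pw<0$. Since $t\ge 2$, we have $(t-1)pw\ge pw=|-pw|$, so the spectral radius is $(t-1)pw$. This also agrees with the Perron root of the connected, positively weighted graph. The energy is the sum of the absolute values of the eigenvalues:
\[
(t-1)pw+(t-1)\cdot pw=2(t-1)pw .
\]

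For the last claim, take $p\ge2$ and assume $w\neq 0$, which holds under the section's standing assumption that $\phi$ is nonzero on degree pairs occurring on edges. Then $t(p-1)\ge 1$, so $0$ actually occurs as an eigenvalue. Moreover $-pw\neq 0$ and $(t-1)pw\neq 0$. The two nonzero values are distinct, since $(t-1)pw=-pw$ would force $tpw=0$. So there are exactly three distinct eigenvalues. (When $t\ge3$ and $w>0$ this is also an instance of Theorem~\ref{cor:regular_complete_multipartite_updated}, but the direct count also covers $t=2$ and negative $w$.)

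The only delicate point is this distinctness check. The case $t=2$ is not excluded: there the eigenvalues are $pw$, $0$ and $-pw$, which are still three distinct values. Everything else is a direct substitution into Theorem~\ref{thm:spectrum_complete_multipartite_updated}.
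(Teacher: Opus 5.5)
Your proposal is correct and follows the same route as the paper: you specialize the symmetric quotient to $S=pw(J_t-I_t)$, read off its spectrum, and append the $t(p-1)$ forced zero eigenvalues. Your explicit distinctness check, which uses $w\neq 0$ from the section's standing assumption and also covers $t=2$, fills in a step the paper leaves implicit.
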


\begin{proof}
	Here every entry of the symmetric quotient off the diagonal is
	$
	\sqrt{p^2}\phi(d,d)=pw.
	$
	Thus
	$
	S=pw(J_t-I_t),
	$
	whose eigenvalues are $(t-1)pw$ once and $-pw$ with multiplicity $t-1$. The remaining $t(p-1)$ eigenvalues are zero. The formulas for the spectral radius and energy follow immediately.
\end{proof}

\begin{corollary}\label{cor:integrality_regular_multipartite_updated}
	Under the hypotheses of Corollary~\ref{cor:regular_multipartite_spectrum_updated}, the weighted spectrum is integral if and only if
	$
	p\phi(d,d)\in\mathbb Z.
	$
\end{corollary}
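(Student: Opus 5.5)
The plan is to read everything off the explicit spectrum in Corollary~\ref{cor:regular_multipartite_spectrum_updated}. Put $w=\phi(d,d)$. Then
$$
\spec(\dot A(G))=\{0^{[t(p-1)]},((t-1)pw)^{[1]},(-pw)^{[t-1]}\}.
$$
Integrality of the spectrum means every listed eigenvalue is an integer. Zero is always an integer, so the condition reduces to $(t-1)pw\in\mathbb Z$ and $-pw\in\mathbb Z$. I will treat the two directions separately.

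\emph{Sufficiency.} Suppose $pw\in\mathbb Z$. Then $-pw\in\mathbb Z$, and $(t-1)pw$ is an integer multiple of an integer, hence an integer. So every eigenvalue is integral.

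\emph{Necessity.} Suppose all eigenvalues are integers. Since $t\ge 2$, the eigenvalue $-pw$ occurs with multiplicity $t-1\ge 1$, so it really is in the spectrum. It is therefore an integer, and so $pw\in\mathbb Z$.

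The only point needing care is making sure that $-pw$ genuinely appears, and this is exactly where $t\ge2$ is used. One could also worry about the edge case $pw=0$, but then the spectrum is all zeros, which is integral, and $pw=0\in\mathbb Z$ is consistent with the claim. No Perron--Frobenius argument or positivity of $w$ is needed, because the spectrum formula in Corollary~\ref{cor:regular_multipartite_spectrum_updated} holds for any real $w$.
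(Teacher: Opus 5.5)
Your proof is correct and follows the same route as the paper: both read the nonzero eigenvalues $(t-1)p\phi(d,d)$ and $-p\phi(d,d)$ off Corollary~\ref{cor:regular_multipartite_spectrum_updated} and conclude that integrality of the spectrum is equivalent to $p\phi(d,d)\in\mathbb Z$. Your check that $-pw$ actually occurs, with multiplicity $t-1\geq 1$, makes explicit the necessity direction that the paper leaves implicit.
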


\begin{proof}
	The nonzero eigenvalues are $(t-1)p\phi(d,d)$ and $-p\phi(d,d)$. Hence integrality of the full spectrum is equivalent to integrality of the single quantity $p\phi(d,d)$.
\end{proof}

\begin{example}\label{ex:integrality_correction}
	The factor $p$ in Corollary~\ref{cor:integrality_regular_multipartite_updated} cannot be omitted. On $K_{2,2,2}$, take a degree weight satisfying $\phi(4,4)=\tfrac12$. Then
	$$
	\spec(\dot A(K_{2,2,2}))=\{0^{[3]},2,(-1)^{[2]}\},
	$$
	which is integral although $\phi(4,4)=\tfrac12\notin\mathbb Z$.
\end{example}

The following consequence is related to the spectra of the regular complete multipartite graph $K_{p,\ldots,p}.$
\begin{corollary}\label{cor:integrality_examples_updated}
	Let $G=K_{p,\ldots,p}$ with $t\ge 2$, $p\ge 2$, and $d=p(t-1)$. Then the following statements hold.
	\begin{enumerate}
		\item For the adjacency, arithmetic--geometric, geometric--arithmetic, first Zagreb, and second Zagreb weights, the weighted spectrum is always integral.
		\item For the ISI weight $\phi(x,y)=\tfrac{xy}{x+y}$, the spectrum is integral if and only if $d$ is even, equivalently, if and only if $p$ is even or $t$ is odd.
		\item For the Randic weight $\phi(x,y)=\tfrac{1}{\sqrt{xy}}$, the spectrum is integral if and only if $t=2$.
		\item For the atom--bond connectivity weight $\phi(x,y)=\sqrt{\tfrac{x+y-2}{xy}}$, the spectrum is integral if and only if $t=2$ and
		$
		p=2m^2+1
		$
		for some positive integer $m$.
		\item For the Sombor weight $\phi(x,y)=\sqrt{x^2+y^2}$ and the modified Sombor weight $\phi(x,y)=\tfrac{1}{\sqrt{x^2+y^2}}$, the weighted spectrum is never integral.
	\end{enumerate}
\end{corollary}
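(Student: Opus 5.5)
The plan is to reduce every item to Corollary~\ref{cor:integrality_regular_multipartite_updated}. Under its hypotheses the spectrum of $\dot A(K_{p,\ldots,p})$ is integral exactly when $p\phi(d,d)\in\mathbb Z$, where $d=p(t-1)$. For each listed weight I will compute $\phi(d,d)$ explicitly, simplify $p\phi(d,d)$ using $p/d=1/(t-1)$, and decide when the result is an integer. No further spectral argument is needed. All weights concerned are positive, so the corollary applies.

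\textbf{Item 1.} The values are $\phi(d,d)=1$ for $A$, $AG$ and $GA$, $2d$ for $M1$, and $d^2$ for $M2$. Hence $p\phi(d,d)$ equals $p$, $2pd$ or $pd^2$, which is always an integer.

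\textbf{Item 2 ($ISI$).} Here $\phi(d,d)=d/2$, so $p\phi(d,d)=p^2(t-1)/2$. This is an integer iff $p^2(t-1)$ is even, iff $p$ is even or $t$ is odd. The same parity condition is equivalent to $d=p(t-1)$ being even.

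\textbf{Item 3 (Randi\'c).} Here $\phi(d,d)=1/d$, so $p\phi(d,d)=1/(t-1)$. This is an integer iff $t=2$.

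\textbf{Item 4 ($ABC$).} Here $\phi(d,d)=\sqrt{2d-2}/d$, so $p\phi(d,d)=\sqrt{2d-2}/(t-1)$. Since $d\ge 2$, this quantity is positive, so any integer value $k$ satisfies $k\ge1$, and then $2p(t-1)-2=k^2(t-1)^2$.
\begin{itemize}
\item If $t=2$, the condition is that $2p-2$ is a perfect square. The square must be even, so $k=2m$ and $p=2m^2+1$; the requirement $p\ge2$ forces $m\ge1$. Conversely, $p=2m^2+1$ gives the eigenvalue parameter $2m$.
\item If $t\ge3$, reducing the equation modulo $t-1$ shows that $t-1$ divides $2$, so $t=3$. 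The equation then reads $4p-2=4k^2$, which is impossible modulo $4$.
\end{itemize}

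\textbf{Item 5 (Sombor and modified Sombor).} Here $\phi(d,d)=\sqrt2\,d$ or $1/(\sqrt2\,d)$. This gives $p\phi(d,d)=\sqrt2\,pd$ or $1/(\sqrt2(t-1))$. Both are nonzero rational multiples of $\sqrt2$, hence irrational, so the spectrum is never integral.

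The only step needing genuine care is the $ABC$ case. It involves both the divisibility argument that excludes $t\ge3$ and the parity argument that produces the form $p=2m^2+1$. Everything else is direct substitution.
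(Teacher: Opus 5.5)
Your proposal is correct and follows essentially the same route as the paper: reduce to the criterion $p\phi(d,d)\in\mathbb Z$ from Corollary~\ref{cor:integrality_regular_multipartite_updated}, compute $p\phi(d,d)$ weight by weight, and settle the $ABC$ case with the same divisibility step ($t-1\mid 2$) followed by the same modulo-$4$ contradiction for $t=3$. Your remark that positivity of the weights is needed is harmless but unnecessary, since that criterion holds for any real value of $\phi(d,d)$.
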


\begin{proof}
	By Corollary~\ref{cor:integrality_regular_multipartite_updated}, it is enough to test $p\phi(d,d)$.
	 For adjacency, AG, and GA, one has $\phi(d,d)=1$. For the first and second Zagreb weights one has $\phi(d,d)=2d$ and $\phi(d,d)=d^2$, respectively. These cases are therefore always integral.
	
	For ISI,
	$$
	p\phi(d,d)=\frac{pd}{2}=\frac{p^2(t-1)}{2}.
	$$
	This is integral exactly when $p^2(t-1)$ is even. Since $p$ and $p^2$ have the same parity, this is equivalent to $d=p(t-1)$ being even.
	
	For the Randic weight,
	$$
	p\phi(d,d)=\frac{p}{d}=\frac{1}{t-1},
	$$
	which is an integer precisely when $t=2$.
	
	For the ABC weight,
	$$
	p\phi(d,d)=\frac{\sqrt{2(d-1)}}{t-1}.
	$$
	Suppose this equals a positive integer $k$ and put $s=t-1$. Then
	$
	2(ps-1)=k^2s^2.
	$
	It follows that $s$ divides $2$, so $s=1$ or $s=2$. If $s=2$, the left side is congruent to $2$ modulo $4$, whereas the right side is divisible by $4$, a contradiction. Hence $s=1$, and
	$
	2(p-1)=k^2.
	$
	Thus $k=2m$ and $p=2m^2+1$. The converse is immediate.
	
	Finally,
	$
	p\sqrt{d^2+d^2}=pd\sqrt2
	$
	and
	$
	\tfrac{p}{\sqrt{d^2+d^2}}=\tfrac{1}{(t-1)\sqrt2},
	$
	so both Sombor quantities are irrational.
\end{proof}

The regular formula also produces several compact energy identities that are useful for checking computations.

\begin{table}[H]
	\centering
	\caption{Closed forms for regular complete multipartite graphs $K_{p,\ldots,p}$, where $d=p(t-1)$.}
	\label{tab:regular_energy_identities} 
		\begin{tabular}{|c|c|c|c|c|}
			\hline
			Matrix & $\phi(d,d)$ & $p\phi(d,d)$ & $\rho(\dot A(G))$ & $\dot E_\phi(G)$\\
			\hline
			$A$ & $1$ & $p$ & $d$ & $2d$\\
			$AG$ & $1$ & $p$ & $d$ & $2d$\\
			$GA$ & $1$ & $p$ & $d$ & $2d$\\
			$ISI$ & $\tfrac d2$ & $\tfrac{pd}{2}$ & $\tfrac{d^2}{2}$ & $d^2$\\
			$M1$ & $2d$ & $2pd$ & $2d^2$ & $4d^2$\\
			$ABC$ & $\tfrac{\sqrt{2(d-1)}}{d}$ & $\tfrac{\sqrt{2(d-1)}}{t-1}$ & $\sqrt{2(d-1)}$ & $2\sqrt{2(d-1)}$\\
			$R$ & $\tfrac1d$ & $\tfrac1{t-1}$ & $1$ & $2$\\
			$M2$ & $d^2$ & $pd^2$ & $d^3$ & $2d^3$\\
			$S$ & $d\sqrt2$ & $pd\sqrt2$ & $d^2\sqrt2$ & $2d^2\sqrt2$\\
			$MS$ & $\tfrac1{d\sqrt2}$ & $\tfrac1{(t-1)\sqrt2}$ & $\tfrac1{\sqrt2}$ & $\sqrt2$\\
			\hline
	\end{tabular}
\end{table}

Table \ref{tab:regular_energy_identities} gives the closed form of energy and the spectral radius with weights for regular complete multipartite graphs $K_{p,\ldots,p}.$

\begin{theorem}\label{thm:multipartite_rho_bounds}
	Let $G=K_{p_1,\ldots,p_t}$ and suppose $\phi(d_i,d_j)>0$ for $i\ne j$. Define the weighted row sum of a vertex in the $i$-th part by
	$
	r_i=\sum_{j\ne i}p_j\phi(d_i,d_j).
	$
	Then
	$$
	\min_{1\le i\le t}r_i\le \rho(\dot A(G))\le \max_{1\le i\le t}r_i.
	$$
	In addition,
	$$
	\rho(\dot A(G))\ge
	\frac{2}{n}\sum_{1\le i<j\le t}p_ip_j\phi(d_i,d_j).
	$$
	Equality in the last bound holds if and only if $r_1=\cdots=r_t$.
\end{theorem}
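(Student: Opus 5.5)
The plan is to prove the row-sum bounds with the equitable quotient and the lower bound with a Rayleigh quotient at the all-ones vector.

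\emph{Row-sum bounds.} The equitable quotient $M$ of Theorem~\ref{thm:spectrum_complete_multipartite_updated} has row sums $\sum_{j\ne i}m_{ij}=\sum_{j\ne i}p_j\phi(d_i,d_j)=r_i$. Every vertex of $V_i$ has the same weighted row sum $r_i$ in $\dot A(G)$, so the row sums of $\dot A(G)$ form the multiset $\{r_i^{[p_i]}\}$. Since $\phi(d_i,d_j)>0$ for $i\neq j$, $\dot A(G)$ is nonnegative and irreducible (for $t\ge 2$ the graph is connected). The classical Frobenius bound for nonnegative matrices then gives $\min_v\sum_u a_{vu}\le\rho\le\max_v\sum_u a_{vu}$, which is the first claim. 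Equivalently, one may apply the same bound to $M$, because $\rho(M)=\rho(S)=\rho(\dot A(G))$ by Theorem~\ref{thm:spectrum_complete_multipartite_updated}.

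\emph{Lower bound.} I would use the Rayleigh quotient with $x=\mathbf 1\in\mathbb R^n$. Because $\dot A(G)$ is real symmetric, $\rho=\lambda_1\ge \mathbf 1^\top\dot A(G)\mathbf 1/n$. Counting edges gives
\[
\mathbf 1^\top\dot A(G)\mathbf 1=\sum_i p_ir_i=2\sum_{i<j}p_ip_j\phi(d_i,d_j),
\]
since each unordered pair of parts contributes $p_ip_j\phi(d_i,d_j)$ twice. This yields the displayed inequality.

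\emph{Equality.} Equality in the Rayleigh bound holds if and only if $\mathbf 1$ is an eigenvector for $\lambda_1$. If $\dot A(G)\mathbf 1=\lambda\mathbf 1$, then all row sums are equal, so $r_1=\cdots=r_t$. Conversely, if all $r_i$ equal a common value $r$, then $\dot A(G)\mathbf 1=r\mathbf 1$ with $\mathbf 1>0$. By Perron--Frobenius, a positive eigenvector of an irreducible nonnegative matrix belongs to $\rho$, so $\rho=r=\mathbf 1^\top\dot A\mathbf 1/n$. For the forward direction I must also justify that equality forces $\mathbf 1$ into the $\lambda_1$-eigenspace. This follows from the spectral decomposition: $\mathbf 1^\top A\mathbf 1=\sum\lambda_k\langle\mathbf 1,u_k\rangle^2\le\lambda_1 n$, with equality only if $\mathbf 1$ has no component along eigenvectors of eigenvalues below $\lambda_1$. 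The main point needing care is exactly this equality characterization, and the argument above handles it. All remaining steps are routine computations.
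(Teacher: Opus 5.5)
Your proposal is correct and follows essentially the same route as the paper: the Frobenius row-sum bound (the paper applies it to the irreducible quotient $M$, whose row sums are $r_1,\ldots,r_t$ and whose spectral radius equals $\rho(\dot A(G))$), followed by the Rayleigh quotient at $\mathbf 1$, with equality exactly when $\mathbf 1$ is a Perron eigenvector. Your spectral-decomposition argument for the equality case spells out a step the paper only asserts.
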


\begin{proof}
	The ordinary quotient $M$ is nonnegative and irreducible, and its row sums are $r_1,\ldots,r_t$. Since $M$ is similar to $S$, it has spectral radius $\rho(\dot A(G))$. The first pair of inequalities is therefore the standard row-sum bound for an irreducible nonnegative matrix.
	
	For the second bound, use the all-ones vector in the Rayleigh quotient of $\dot A(G)$. Since every edge between $V_i$ and $V_j$ has weight $\phi(d_i,d_j)$ and there are $p_ip_j$ such edges,
	$$
	\rho(\dot A(G))\ge
	\frac{\mathbf 1^{\top}\dot A(G)\mathbf 1}{\mathbf 1^{\top}\mathbf 1}
	=\frac{2}{n}\sum_{i<j}p_ip_j\phi(d_i,d_j).
	$$
	For a connected positive weighted graph, equality in the Rayleigh quotient occurs exactly when $\mathbf 1$ is a Perron eigenvector, which is equivalent to all weighted row sums being equal.
\end{proof}

\begin{example}\label{ex:rho_bounds_k311}
	For the ordinary adjacency matrix of $K_{3,1,1}$, the three partwise row sums are $2,4,4$. The graph has seven edges and order five, while Corollary~\ref{thm:tripartite_cubic_inertia} give $\rho=3$. Thus
	$$
	2\le \frac{14}{5}=2.8\le 3\le 4,
	$$
	which illustrates both bounds in Theorem~\ref{thm:multipartite_rho_bounds}.
\end{example}

\begin{table}[H]
	\centering
	\caption{Numerical and exact checks for the structural results of the section.}
	\label{tab:structural_numerics}
	\resizebox{\textwidth}{!}{%
		\begin{tabular}{|c|c|c|c|c|c|c|}
			\hline
			Graph & Weight & $\spec(S)$ & $\operatorname{mult}(0)$ & Distinct eigenvalues & Energy & Integral?\\
			\hline
			$K_{4,3,2,1}$ & Example~\ref{ex:extra_zero_multipartite} & $\{5,0,-1,-4\}$ & $7$ & $4$ & $10$ & Yes\\
			$K_{3,1,1}$ & $A$ & $\{3,-1,-2\}$ & $2$ & $4$ & $6$ & Yes\\
			$K_{2,2,2}$ & $ISI$ & $\{8,-4,-4\}$ & $3$ & $3$ & $16$ & Yes\\
			$K_{9,1,1}$ & $ISI$ & $\{10,-5,-5\}$ & $8$ & $3$ & $20$ & Yes\\
			$K_{4,1,1,1}$ & $M2$ & $\{108,-36,-36,-36\}$ & $3$ & $3$ & $216$ & Yes\\
			$K_{3,3}$ & $ABC$ & $\{2,-2\}$ & $4$ & $3$ & $4$ & Yes\\
			\hline
	\end{tabular}}
\end{table}

Table \ref{tab:structural_numerics} gives the numerical and exact checks  for spectra, energy and integral. In each row, the displayed zero multiplicity is the multiplicity in the full weighted adjacency matrix.

\begin{table}[H]
	\centering
	\caption{Numerical verification of the regular formulas.}
	\label{tab:regular_numerical_validation}
	\resizebox{\textwidth}{!}{%
		\begin{tabular}{|c|rrrr|rrrr|}
			\hline
			& \multicolumn{4}{c|}{$K_{2,2,2}$} & \multicolumn{4}{c|}{$K_{3,3,3,3}$}\\
			\cline{2-9}
			Matrix & $a$ & $\rho$ & Energy & Int. & $a$ & $\rho$ & Energy & Int.\\
			\hline
			$A$   & $2$ & $4$ & $8$ & Yes & $3$ & $9$ & $18$ & Yes\\
			$AG$  & $2$ & $4$ & $8$ & Yes & $3$ & $9$ & $18$ & Yes\\
			$GA$  & $2$ & $4$ & $8$ & Yes & $3$ & $9$ & $18$ & Yes\\
			$ISI$ & $4$ & $8$ & $16$ & Yes & $13.5$ & $40.5$ & $81$ & No\\
			$M1$  & $16$ & $32$ & $64$ & Yes & $54$ & $162$ & $324$ & Yes\\
			$ABC$ & $1.224745$ & $2.449490$ & $4.898979$ & No & $1.333333$ & $4$ & $8$ & No\\
			$R$   & $0.5$ & $1$ & $2$ & No & $0.333333$ & $1$ & $2$ & No\\
			$M2$  & $32$ & $64$ & $128$ & Yes & $243$ & $729$ & $1458$ & Yes\\
			$S$   & $11.313708$ & $22.627417$ & $45.254834$ & No & $38.183766$ & $114.551299$ & $229.102597$ & No\\
			$MS$  & $0.353553$ & $0.707107$ & $1.414214$ & No & $0.235702$ & $0.707107$ & $1.414214$ & No\\
			\hline
	\end{tabular}}
\end{table}

Table \ref{tab:regular_numerical_validation} verification for regular multipartite graphs. For $K_{2,2,2}$, $d=4$ and the zero multiplicity is $3$; for $K_{3,3,3,3}$, $d=9$ and the zero multiplicity is $8$. The column $a$ denotes $p\phi(d,d)$, so the nonzero spectrum is $\{(t-1)a,(-a)^{[t-1]}\}$.

The calculations in Tables~\ref{tab:structural_numerics} and \ref{tab:regular_numerical_validation} also separate the three issues that are easy to conflate: the forced zero eigenvalues coming from repeated rows inside the partite sets, possible additional zeros of the reduced matrix, and arithmetic integrality of the nonzero reduced eigenvalues. The symmetric quotient $S$ keeps these points visible and avoids treating a nonsymmetric equitable quotient as though it admitted an orthogonal spectral decomposition.

\section{Weighted adjacency energy of some special families of graphs}\label{section 3}

Throughout this section we assume that $\phi(x,y)=\phi(y,x)>0$ on the degree pairs that occur on edges. This is the natural setting for the degree--based matrices considered in Table~\ref{table matrices}, and it allows us to use Perron--Frobenius theory without adding sign qualifications at every step. The calculations below also correct several statements in the original version of this section. 

\medskip

Let $e=v_1v_2$ be an edge of $K_n$, where $n\geq 3$, and let $G=K_n-e$. We use the abbreviations
$
b_n=\phi(n-1,n-1),$ and $a_n=\phi(n-2,n-1).
$
Thus $b_n$ is the weight of every edge of $K_n$, while $a_n$ is the weight of an edge of $G$ incident with one of the two vertices $v_1,v_2$ whose degree drops from $n-1$ to $n-2$.

\begin{proposition}\label{thm:Kn-minus-edge-exact}
	For $n\geq 3$, the weighted adjacency spectrum of $K_n$ is
	$ \Big\{(n-1)b_n,\;(-b_n)^{[n-1]}\Big\}, $
	and
	$
	\dot E_\phi(K_n)=2(n-1)b_n.
	$
	Moreover, the spectrum of $K_n-e$ is
	$$
	\spec(\dot A(K_n-e))
	=
	\Big\{0,\;(-b_n)^{[n-3]},\;\lambda_+,\;\lambda_-\Big\},
	$$
	where
	$
	\lambda_{\pm}
	=
	\tfrac{(n-3)b_n\pm\Delta_n}{2},
	$ and $
	\Delta_n=
	\sqrt{(n-3)^2b_n^2+8(n-2)a_n^2}.
	$
	Consequently,
	$
	\rho(\dot A(K_n-e))=\lambda_+
	$
	and
	$
	\dot E_\phi(K_n-e)=(n-3)b_n+\Delta_n.
	$
\end{proposition}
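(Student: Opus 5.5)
For $K_n$, which is $(n-1)$-regular, every edge carries the weight $b_n$, so $\dot A(K_n)=b_n(J_n-I_n)$. This has eigenvalue $(n-1)b_n$ on $\mathbf 1$ and $-b_n$ on $\mathbf 1^\perp$. Since $b_n>0$, the energy is $(n-1)b_n+(n-1)b_n=2(n-1)b_n$. Equivalently, this is Corollary~\ref{cor:regular_multipartite_spectrum_updated} with $p=1$, $t=n$.

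For $G=K_n-e$ with $e=v_1v_2$, I would first record the degrees. The vertices $v_1,v_2$ have degree $n-2$, and the remaining set $W=V\setminus\{v_1,v_2\}$ consists of $n-2$ vertices of degree $n-1$. Edges inside $W$ carry weight $b_n$, edges between $\{v_1,v_2\}$ and $W$ carry weight $a_n$, and $v_1v_2$ is absent. Equivalently, $G=K_{2,1,\ldots,1}$, but I will argue directly with a decomposition of $\mathbb R^n$ into invariant pieces. The first piece is $e_{v_1}-e_{v_2}$. The vertices $v_1,v_2$ are nonadjacent twins, so Theorem~\ref{independence Sombor eigenvalue} applies, and a direct check gives eigenvalue $0$. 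The second piece is the $(n-3)$-dimensional space of vectors supported on $W$ with coordinate sum zero. The vertices of $W$ form a clique of twins, so Theorem~\ref{clique Sombor eigenvalue} applies. Directly, $\dot A(G)x=b_n(J-I)x=-b_nx$ on $W$, and the entries on $v_1,v_2$ vanish because the coordinate sum is zero.

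The orthogonal complement of these pieces is spanned by $\mathbf 1_{\{v_1,v_2\}}$ and $\mathbf 1_W$, which is the equitable partition $\{\{v_1,v_2\},W\}$. The quotient matrix is
$$M=\begin{pmatrix}0&(n-2)a_n\\ 2a_n&(n-3)b_n\end{pmatrix}.$$
Its characteristic polynomial is $x^2-(n-3)b_nx-2(n-2)a_n^2$. The roots are $\lambda_\pm=\frac{(n-3)b_n\pm\Delta_n}{2}$, with $\Delta_n^2=(n-3)^2b_n^2+8(n-2)a_n^2$. Counting dimensions, $1+(n-3)+2=n$, so the spectrum is exactly as stated.

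For the consequences, the product of the roots is $\lambda_+\lambda_-=-2(n-2)a_n^2<0$, so $\lambda_+>0>\lambda_-$. Also $\lambda_+\ge (n-3)b_n/2+\ldots$ exceeds $|{-b_n}|$ and $|\lambda_-|$. One could check this directly, but it is simplest to invoke Perron--Frobenius: $G$ is connected with positive weights, and $\lambda_+$ has a positive eigenvector coming from the quotient. Hence $\rho=\lambda_+$. The energy is then
$$\lambda_+-\lambda_-+(n-3)b_n=\Delta_n+(n-3)b_n.$$
The only delicate point is the sign bookkeeping in this energy computation and the identification of $\rho$. Both follow from $\lambda_+\lambda_-<0$ together with the positivity of the quotient eigenvector. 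For $n=3$ the $(-b_n)$ block is empty, and the formulas still hold.
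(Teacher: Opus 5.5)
Your proposal is correct and follows the paper's proof step by step. It uses the same decomposition into $e_{v_1}-e_{v_2}$ (eigenvalue $0$), the zero-sum vectors on $W$ (eigenvalue $-b_n$), and the same $2\times 2$ equitable quotient, then the sign argument from $\lambda_+\lambda_-<0$ and Perron--Frobenius to get $\rho=\lambda_+$; just delete the unfinished sentence ``$\lambda_+\ge (n-3)b_n/2+\ldots$'', since the positive-eigenvector argument already settles it.
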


\begin{proof}
	Since $K_n$ is $(n-1)$-regular, every edge has weight $b_n$, and hence
	$
	\dot A(K_n)=b_n A(K_n)=b_n(J_n-I_n).
	$
	The first spectrum and energy formula follow immediately.
	
	For $G=K_n-e$, order the vertices as $v_1,v_2,v_3,\ldots,v_n$. Then
	$$
	\dot A(G)=
	\begin{pmatrix}
		0_2 & a_nJ_{2\times(n-2)}\\
		a_nJ_{(n-2)\times2} & b_n(J_{n-2}-I_{n-2})
	\end{pmatrix}.
	$$
	The vector $e_1-e_2$ is a $0$-eigenvector. The subspace formed by vectors supported on $\{v_3,\ldots,v_n\}$ whose coordinates sum to $0$ has dimension $n-3$ and carries the eigenvalue $-b_n$. The remaining two eigenvalues are obtained from the equitable quotient
	$$
	Q_n=
	\begin{pmatrix}
		0&(n-2)a_n\\
		2a_n&(n-3)b_n
	\end{pmatrix}.
	$$
	Its characteristic polynomial is
	$
	x^2-(n-3)b_nx-2(n-2)a_n^2,
	$
	which gives $\lambda_\pm$. Since the constant term is negative, $\lambda_+>0>\lambda_-$. The matrix is nonnegative and irreducible, so $\lambda_+$ is its spectral radius. Finally,
	$$
	\dot E_\phi(G)
	=(n-3)b_n+|\lambda_+|+|\lambda_-|
	=(n-3)b_n+\Delta_n.
	$$
\end{proof}

The next statement gives the edge--deletion criterion for the weighted adjacency matrix.
\begin{theorem}\label{thm:Kn-edge-trichotomy}
	Let
	$
	R_n=\tfrac{b_n}{a_n}
	$ and $
	\tau_n=\sqrt{\tfrac{n-2}{n-1}}.
	$
	Then
	$$
	\dot E_\phi(K_n-e)-\dot E_\phi(K_n)
	=
	2\Big(\rho(\dot A(K_n-e))-\rho(\dot A(K_n))\Big).
	$$
	More precisely,
	\begin{enumerate}
		\item if $R_n<\tau_n$, then both the spectral radius and the energy strictly increase after deleting $e$;
		\item if $R_n=\tau_n$, then both quantities remain unchanged;
		\item if $R_n>\tau_n$, then both the spectral radius and the energy strictly decrease after deleting $e$.
	\end{enumerate}
\end{theorem}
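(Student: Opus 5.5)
The plan is to read everything off Proposition~\ref{thm:Kn-minus-edge-exact}. That proposition gives
\[
\dot E_\phi(K_n)=2(n-1)b_n,\qquad \rho(\dot A(K_n))=(n-1)b_n,
\]
\[
\dot E_\phi(K_n-e)=(n-3)b_n+\Delta_n,\qquad \rho(\dot A(K_n-e))=\lambda_+=\tfrac{(n-3)b_n+\Delta_n}{2}.
\]
We note that $\rho(\dot A(K_n))=(n-1)b_n$ because $b_n>0$ under the standing positivity assumption of this section. Multiplying the last identity by $2$ gives $\dot E_\phi(K_n-e)=2\lambda_+=2\rho(\dot A(K_n-e))$. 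Since also $\dot E_\phi(K_n)=2\rho(\dot A(K_n))$, subtracting the two relations yields the displayed identity immediately. In particular, the energy difference and the spectral-radius difference always have the same sign, so the trichotomy only needs to be checked for $\rho$.

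Next I would compute the sign of
\[
\lambda_+-(n-1)b_n=\tfrac12\bigl(\Delta_n-(n+1)b_n\bigr).
\]
Both $\Delta_n$ and $(n+1)b_n$ are positive, so this has the sign of $\Delta_n^2-(n+1)^2b_n^2$. Substituting $\Delta_n^2=(n-3)^2b_n^2+8(n-2)a_n^2$ gives
\[
(n-3)^2-(n+1)^2=-8(n-1),
\]
and therefore
\[
\Delta_n^2-(n+1)^2b_n^2=8\bigl((n-2)a_n^2-(n-1)b_n^2\bigr).
\]
Dividing by the positive quantity $8(n-1)a_n^2$, the sign equals the sign of $\tau_n^2-R_n^2$.

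Because $R_n>0$ and $\tau_n>0$, the sign of $\tau_n^2-R_n^2$ is the sign of $\tau_n-R_n$. This gives the three cases: strict increase when $R_n<\tau_n$, equality when $R_n=\tau_n$, and strict decrease when $R_n>\tau_n$. The energy statements then follow from the identity established in the first step.

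The argument is essentially routine algebra. The only step needing care is the sign bookkeeping: squaring is legitimate only because both sides are positive, and this rests on the positivity of $a_n$ and $b_n$ assumed at the start of the section.
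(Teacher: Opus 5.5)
Your proof is correct and matches the paper's argument: both read $\rho$ and $\dot E_\phi$ off Proposition~\ref{thm:Kn-minus-edge-exact}, note that the energy difference is twice the spectral-radius difference, and use positivity of $\Delta_n$ and $(n+1)b_n$ to reduce $\Delta_n \gtrless (n+1)b_n$ to $(n-2)a_n^2 \gtrless (n-1)b_n^2$. Your explicit computation $(n-3)^2-(n+1)^2=-8(n-1)$ just fills in algebra that the paper leaves implicit.
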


\begin{proof}
	By Proposition~\ref{thm:Kn-minus-edge-exact}, we have
	$$
	\rho(\dot A(K_n-e))-\rho(\dot A(K_n))
	=
	\frac{\Delta_n-(n+1)b_n}{2},
	$$
	while
	$$
	\dot E_\phi(K_n-e)-\dot E_\phi(K_n)
	=\Delta_n-(n+1)b_n.
	$$
	This proves the first identity. Since all quantities are positive, so
	$
	\Delta_n>(n+1)b_n
	$
	is equivalent to
	$
	(n-2)a_n^2>(n-1)b_n^2,
	$
	that is,
	$$
	\frac{b_n}{a_n}<\sqrt{\frac{n-2}{n-1}}.
	$$
	The equality and reverse inequality cases follow in the same way.
\end{proof}

The following consequence follows form above result and it corrects Theorem 2  of Bilal and Munir \cite{bilalpak}.
\begin{corollary}\label{cor:ISI-Kn-deletion}
	For the inverse sum indeg weight
	$
	\phi(x,y)=\tfrac{xy}{x+y},
	$
	the spectral radius and the $ISI$ energy of $K_n$ strictly decrease after deleting any edge, for every $n\geq 3$.
\end{corollary}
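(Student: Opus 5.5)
The plan is to apply Theorem~\ref{thm:Kn-edge-trichotomy} directly: it suffices to show that for the $ISI$ weight one has $R_n>\tau_n$ for every $n\ge 3$. The first step is to compute the two weights explicitly. For $\phi(x,y)=\tfrac{xy}{x+y}$,
$$
b_n=\phi(n-1,n-1)=\frac{n-1}{2},\qquad a_n=\phi(n-2,n-1)=\frac{(n-2)(n-1)}{2n-3}.
$$
Both are positive for $n\ge 3$, so the standing positivity assumption of Section~\ref{section 3} holds and Proposition~\ref{thm:Kn-minus-edge-exact} applies. Hence
$$
R_n=\frac{b_n}{a_n}=\frac{2n-3}{2(n-2)}.
$$

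The second step is the comparison $R_n>\tau_n=\sqrt{(n-2)/(n-1)}$. I would avoid square roots by noting that $\tau_n<1$ for all $n\ge 3$. I would also note that $R_n=1+\tfrac{1}{2(n-2)}>1$. Together these give $R_n>1>\tau_n$ immediately, with no squaring needed. This is strict for every $n\ge 3$; for instance $n=3$ gives $R_3=\tfrac32$ and $\tau_3=\tfrac{1}{\sqrt2}$.

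The final step is to invoke case~3 of Theorem~\ref{thm:Kn-edge-trichotomy}, which gives a strict decrease of both $\rho(ISI)$ and the $ISI$ energy. It also gives the identity $\dot E_\phi(K_n-e)-\dot E_\phi(K_n)=2(\rho(\dot A(K_n-e))-\rho(\dot A(K_n)))<0$. Since $K_n$ is edge-transitive, $K_n-e$ is the same graph up to isomorphism for every choice of edge $e$, so the statement holds for \emph{any} deleted edge.

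There is no real obstacle; the only point needing care is the computation of $a_n$. The degrees after deletion are $n-2$ and $n-1$, and one must use the updated degree pair (not $b_n$) on the edges incident with $v_1,v_2$. This is precisely the degree-sensitivity that the earlier claim in \cite{bilalpak} overlooked.
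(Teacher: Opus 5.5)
Your proposal is correct and follows the paper's proof essentially verbatim. You compute $b_n=\tfrac{n-1}{2}$ and $a_n=\tfrac{(n-2)(n-1)}{2n-3}$, observe $R_n=\tfrac{2n-3}{2n-4}>1>\tau_n$, and invoke case~3 of Theorem~\ref{thm:Kn-edge-trichotomy}; your extra remark that edge-transitivity of $K_n$ makes the choice of $e$ irrelevant is a harmless clarification that the paper leaves implicit.
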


\begin{proof}
	As
	$
	b_n=\tfrac{n-1}{2},
	$  $
	a_n=\tfrac{(n-2)(n-1)}{2n-3},
	$
	and hence
	$$
	R_n=\frac{2n-3}{2n-4}>1>\sqrt{\frac{n-2}{n-1}}.
	$$
	The conclusion follows from Theorem~\ref{thm:Kn-edge-trichotomy}.
\end{proof}

\begin{example}\label{ex:ISI-K23-corrected}
	For $n=23$, the $ISI$ spectrum of $K_{23}$ is
	$
	\Big\{242,\;(-11)^{[22]}\Big\},
	$
	so $\dot E_{ISI}(K_{23})=484$. For $K_{23}-e$, we have
	$$
	\spec(ISI(K_{23}-e))
	\approx
	\Big\{240.185930,\;0,\;(-11)^{[20]},\;-20.185930\Big\},
	$$
	and therefore
	$$
	\dot E_{ISI}(K_{23}-e)\approx480.371860<484.
	$$
	This example agrees with the corrected direction in Theorem~\ref{thm:Kn-edge-trichotomy}.
\end{example}

The exact threshold also gives a clean classification for the general Randi\`c family.

\begin{corollary}\label{cor:general-Randic-Kn}
	Let $\phi_\alpha(x,y)=(xy)^\alpha$, where $\alpha\in\mathbb R$. For every $n\geq3$, deletion of one edge from $K_n$ has the following effect on both weighted spectral radius and weighted energy:
	\begin{enumerate}
		\item they increase if $\alpha<-	\tfrac12$;
		\item they remain unchanged if $\alpha=-	\tfrac12$;
		\item they decrease if $\alpha>-	\tfrac12$.
	\end{enumerate}
\end{corollary}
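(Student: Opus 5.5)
The plan is to reduce the statement to Theorem~\ref{thm:Kn-edge-trichotomy}. That theorem says the direction of change of both the spectral radius and the energy is decided by comparing $R_n=b_n/a_n$ with $\tau_n=\sqrt{(n-2)/(n-1)}$. Everything therefore comes down to computing $R_n$ for the weight $\phi_\alpha$ and comparing it with $\tau_n$ as a function of $\alpha$. Note that $\phi_\alpha>0$, so the positivity hypothesis of Section~\ref{section 3} holds and Proposition~\ref{thm:Kn-minus-edge-exact} and Theorem~\ref{thm:Kn-edge-trichotomy} apply for every $n\ge 3$.

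The weights are
\[
b_n=\phi_\alpha(n-1,n-1)=(n-1)^{2\alpha}, \qquad a_n=\phi_\alpha(n-2,n-1)=\bigl((n-2)(n-1)\bigr)^{\alpha},
\]
so
\[
R_n=\Bigl(\frac{n-1}{n-2}\Bigr)^{\alpha}=\tau_n^{-2\alpha}.
\]
Since $n\ge 3$, we have $0<\tau_n<1$. The map $\beta\mapsto\tau_n^{\beta}$ is therefore strictly decreasing, and the comparison becomes a comparison of exponents.

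The three cases follow directly:
\begin{itemize}
\item $R_n<\tau_n$ means $\tau_n^{-2\alpha}<\tau_n^{1}$, which holds iff $-2\alpha>1$, i.e.\ $\alpha<-\tfrac12$. This is the increase case of Theorem~\ref{thm:Kn-edge-trichotomy}.
\item $R_n=\tau_n$ holds iff $\alpha=-\tfrac12$. This is the unchanged case, which is the ordinary Randi\'c matrix.
\item $R_n>\tau_n$ holds iff $\alpha>-\tfrac12$. This is the decrease case.
\end{itemize}
In each case Theorem~\ref{thm:Kn-edge-trichotomy} gives the same conclusion for the spectral radius and the energy together, because their differences are tied by the factor $2$.

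There is no real obstacle. The only step needing care is the monotonicity direction: since $\tau_n<1$, larger exponents give smaller values, so the inequality flips when passing to exponents. The result is also uniform in $n$, because $\tau_n$ cancels out of the exponent comparison.
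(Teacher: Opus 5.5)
Your proposal is correct and matches the paper's proof: both compute $R_n=\bigl(\tfrac{n-1}{n-2}\bigr)^{\alpha}$, write $\tau_n$ as a power of the same base, and so reduce the trichotomy of Theorem~\ref{thm:Kn-edge-trichotomy} to comparing $\alpha$ with $-\tfrac12$. Using base $\tau_n<1$, with the inequality reversing, instead of the paper's base $\tfrac{n-1}{n-2}>1$ is only a cosmetic difference.
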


\begin{proof}
	For this family,
	$$
	R_n
	=
	\frac{((n-1)^2)^\alpha}{((n-2)(n-1))^\alpha}
	=
	\left(\frac{n-1}{n-2}\right)^\alpha,
	$$
	whereas
	$$
	\tau_n
	=
	\left(\frac{n-1}{n-2}\right)^{-1/2}.
	$$
	Since $\tfrac{n-1}{n-2}>1$, the comparison between $R_n$ and $\tau_n$ is exactly the comparison between $\alpha$ and $-	\tfrac12$.
\end{proof}

\begin{remark}
	The case $\alpha=-\tfrac12$ is the Randi\`c matrix. Thus
	$$
	\rho(R(K_n))=\rho(R(K_n-e))=1
	$$
	and
	$$
	\dot E_R(K_n)=\dot E_R(K_n-e)=2.
	$$
	This equality case shows that a blanket conjecture asserting strict energy decrease for ``almost all'' degree weights is unnecessary: Theorem~\ref{thm:Kn-edge-trichotomy} gives the exact answer, including increase, equality, and decrease.
\end{remark}

\begin{table}[H]
	\centering 
	\setlength{\tabcolsep}{3.5pt}
	\begin{tabular}{|c|c|c|c|c|c|c|}
		\hline
		Matrix & $R_{25}$ & $\rho(K_{25})$ & $\rho(K_{25}-e)$ & $\dot E(K_{25})$ & $\dot E(K_{25}-e)$ & Effect\\
		\hline
		$ISI$ & $1.021739$ & $288.000000$ & $286.172392$ & $576.000000$ & $572.344783$ & decrease\\
		\hline
		$ABC$ & $0.989762$ & $6.782330$ & $6.770971$ & $13.564660$ & $13.541941$ & decrease\\
		\hline
		$A$ & $1.000000$ & $24.000000$ & $23.922848$ & $48.000000$ & $47.845696$ & decrease\\
		\hline
		$AG$ & $0.999774$ & $24.000000$ & $23.923654$ & $48.000000$ & $47.847308$ & decrease\\
		\hline
		$GA$ & $1.000226$ & $24.000000$ & $23.922042$ & $48.000000$ & $47.844085$ & decrease\\
		\hline
		$M_1$ & $1.021277$ & $1152.000000$ & $1144.764136$ & $2304.000000$ & $2289.528273$ & decrease\\
		\hline
		$R$ & $0.978945$ & $1.000000$ & $1.000000$ & $2.000000$ & $2.000000$ & unchanged\\
		\hline
		$M_2$ & $1.043478$ & $13824.000000$ & $13695.434761$ & $27648.000000$ & $27390.869522$ & decrease\\
		\hline
		$S$ & $1.021046$ & $814.587012$ & $809.496852$ & $1629.174024$ & $1618.993704$ & decrease\\
		\hline
		$MS$ & $0.979388$ & $0.707107$ & $0.707058$ & $1.414214$ & $1.414115$ & decrease\\
		\hline
		$(xy)^{-1}$ & $0.958333$ & $0.041667$ & $0.041806$ & $0.083333$ & $0.083611$ & increase\\
		\hline
	\end{tabular}
	\caption{Correct numerical comparison for $K_{25}$ and $K_{25}-e$.}
	\label{tab:Kn-corrected-numerics}
\end{table}
Table \ref{tab:Kn-corrected-numerics} shows the increase/decrease numerical check for $K_{25}$ and $K_{25}-e$. Here $\tau_{25}=\sqrt{23/24}\approx0.978945$. The last row illustrates the increasing regime, while the Randi\`c row gives exact equality.
Thus, the $ISI$ energy $\dot{E}_{ISI}$ of $K_{n}$ does not increase upon edge deletion, which contradicts Theorem 2 presented by Bilal and Munir \cite{bilalpak}.
\subsection{The $ISI$ matrix of a regular complete multipartite graph after one edge deletion}

Let
$K_{p}^{(t)}=K_{\underbrace{p,p,\ldots,p}_{t}},~ t\geq3,$ and $ p\geq2$ be the complete multipartite graph.
Every vertex of $K_p^{(t)}$ has degree
$
r=p(t-1),
$ and hence
$$
ISI(K_p^{(t)})=\frac{r}{2}A(K_p^{(t)}),
$$
and therefore
$$
\dot E_{ISI}(K_p^{(t)})=r^2=p^2(t-1)^2.
$$
Notice, however, that $K_p^{(t)}-e$ is not regular. Thus the identity $ISI(G)=\tfrac r2A(G)$ cannot be applied to the graph after deletion. This is precisely why an adjacency-energy theorem for a regular graph does not by itself settle the corresponding $ISI$ problem after an edge is removed.

Let $e=uv$ join the first and second partite sets and put
$
G_{t,p}=K_p^{(t)}-e.
$
The two exceptional vertices have degree $r-1$, all other vertices have degree $r$, and the two edge weights that occur in $G_{t,p}$ are
$a=\tfrac{r(r-1)}{2r-1},$ and $b=\tfrac r2.$

The following result gives the $ISI$ spectrum of multipartite graph upon edge deletion.
\begin{theorem}\label{thm:ISI-general-multipartite-minus-edge}
	For $t\geq3$ and $p\geq2$, the $ISI$ spectrum of $G_{t,p}$ is
	$$
	\spec(ISI(G_{t,p}))
	=
	\Big\{0^{[pt-t-2]},\;(-bp)^{[t-3]},\;\lambda_+,\lambda_-,\;\mu_1,\mu_2,\mu_3\Big\},
	$$
	where
	$$
	\lambda_{\pm}
	=
	\frac{-b(p-1)\pm\sqrt{b^2(p-1)^2+4a^2(p-1)}}{2},
	$$
	and $\mu_1,\mu_2,\mu_3$ are the eigenvalues of
	$$
	M_{t,p}=
	\begin{pmatrix}
		0&a(p-1)&ap(t-2)\\
		a&b(p-1)&bp(t-2)\\
		2a&2b(p-1)&bp(t-3)
	\end{pmatrix}.
	$$
	In particular,
	$$
	\det(M_{t,p})
	=
	\frac{p^4(p-1)(t-1)^4(pt-p-1)^2}{2(2pt-2p-1)^2}>0,
	$$
	so no additional zero eigenvalue is hidden in the $3\times3$ block.
\end{theorem}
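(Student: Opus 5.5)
The plan is to copy the proof of Theorem~\ref{thm:spectrum_complete_multipartite_updated}: split $\mathbb R^{V(G_{t,p})}$ into mutually orthogonal invariant subspaces on which $ISI(G_{t,p})$ acts by explicitly known small matrices.

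\textbf{Partition and weights.} Write $e=uv$ with $u\in V_1$ and $v\in V_2$. Set $V_1'=V_1\setminus\{u\}$, $V_2'=V_2\setminus\{v\}$ and $W=V_3\cup\cdots\cup V_t$. The vertices $u,v$ have degree $r-1$ and all other vertices have degree $r$. Hence every edge at $u$ or $v$ has weight $a$, and every other edge has weight $b$. The partition
\[
\{u\},\ \{v\},\ V_1',\ V_2',\ V_3,\ldots,V_t
\]
is equitable for the weighted matrix.

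\textbf{Zero-sum subspaces.} Exactly as in Theorem~\ref{thm:spectrum_complete_multipartite_updated}, a vector supported on one cell, with coordinates summing to zero, lies in the kernel. This uses only that each cell is independent and that its vertices have identical weighted neighbourhoods. These subspaces have total dimension
\[
2(p-2)+(t-2)(p-1)=pt-t-2,
\]
which gives $0^{[pt-t-2]}$.

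\textbf{The parts $V_3,\ldots,V_t$.} Next take vectors $\sum_{j\ge 3}c_j\mathbf 1_{V_j}$ with $\sum_j c_j=0$. Each of $u,v,V_1',V_2'$ sees the sum $\sum_j c_j p=0$. A vertex of $V_k$ with $k\ge3$ sees $b\,p\sum_{j\ne k,\,j\ge3}c_j=-bp\,c_k$. So these vectors form a $(t-3)$-dimensional eigenspace for $-bp$.

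\textbf{The swap symmetry.} What remains is the $5$-dimensional span of $\mathbf 1_{\{u\}},\mathbf 1_{\{v\}},\mathbf 1_{V_1'},\mathbf 1_{V_2'},\mathbf 1_W$. I will split it using the involution that exchanges $V_1\leftrightarrow V_2$ and $u\leftrightarrow v$; this is an automorphism preserving the weights.

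On the antisymmetric part, spanned by $\mathbf 1_u-\mathbf 1_v$ and $\mathbf 1_{V_1'}-\mathbf 1_{V_2'}$, the $W$-contributions cancel. A direct row count gives the quotient
\[
\begin{pmatrix}0&-a(p-1)\\-a&-b(p-1)\end{pmatrix},
\]
with characteristic polynomial $x^2+b(p-1)x-a^2(p-1)$. Its roots are $\lambda_\pm$.

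On the symmetric part, take cells $\{u,v\}$, $V_1'\cup V_2'$ and $W$. Counting neighbours by weight gives the rows of $M_{t,p}$:
\begin{itemize}
\item[] a vertex $u$ sees $p-1$ vertices of $V_2'$ and $p(t-2)$ vertices of $W$, all with weight $a$;
\item[] a vertex of $V_1'$ sees $v$ with weight $a$, $p-1$ vertices of $V_2'$ with weight $b$, and $p(t-2)$ vertices of $W$ with weight $b$;
\item[] a vertex of $W$ sees both $u,v$ with weight $a$, $2(p-1)$ vertices of $V_1'\cup V_2'$ with weight $b$, and $p(t-3)$ vertices of $W$ with weight $b$.
\end{itemize}
The quotient is not symmetric, but it is diagonally similar to the restriction, as in Theorem~\ref{thm:spectrum_complete_multipartite_updated}, so its eigenvalues are the $\mu_i$.

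\textbf{Counting.} The dimensions add up correctly:
\[
(pt-t-2)+(t-3)+2+3=pt=|V(G_{t,p})|.
\]
So the multiset union is the full spectrum.

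\textbf{Determinant (main obstacle).} This is the step I expect to be laborious. I would expand $\det M_{t,p}$ along the first row, which has a zero entry. Then I would factor out $a^2(p-1)$, since every surviving term contains it. Finally I would substitute $b=r/2$, $a=r(r-1)/(2r-1)$ and $r=p(t-1)$, and simplify the remaining linear combination of $b$, $p$ and $t$ until it matches the stated closed form.

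Positivity then follows from $p\ge2$ and $t\ge3$, which make $p-1$ and $pt-p-1$ nonzero. Note that $\lambda_+\lambda_-=-a^2(p-1)\ne0$ and $-bp\ne0$. Hence the zero multiplicity is exactly $pt-t-2$.
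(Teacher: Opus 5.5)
Your proposal is correct and follows the paper's proof essentially step for step: the same refined equitable partition, the same $pt-t-2$ zero-sum kernel vectors, the $(t-3)$-dimensional $-bp$ eigenspace on $V_3,\ldots,V_t$, and the swap-involution splitting of the remaining five-dimensional quotient into the $2\times2$ antisymmetric block and $M_{t,p}$. The determinant step is quicker than you expect, because the $(1,3)$ minor vanishes, so $\det M_{t,p}=a^2bp(p-1)(t-1)$; substituting $a$, $b$ and $r=p(t-1)$ then gives the stated closed form directly.
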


\begin{proof}
	Let the deleted edge be $u\in V_1$ and $v\in V_2$. Now, split $V_1$ into $\{u\}$ and $V_1\setminus\{u\}$, split $V_2$ similarly, and keep each of $V_3,\ldots,V_t$ as a cell. The vertices inside every nonsingleton cell have identical weighted rows outside that cell. Hence the usual zero-sum subspaces inside the cells contribute
	$$
	2(p-2)+(t-2)(p-1)=pt-t-2
	$$
	zero eigenvalues.
	
	On the quotient space, first use the involution interchanging the first two partite sets. Its antisymmetric subspace has basis corresponding to
	$
	\{u\}-\{v\},
	$ and $
	(V_1\setminus\{u\})-(V_2\setminus\{v\}),
	$
	and the induced matrix is
	$$
	A_-=
	\begin{pmatrix}
		0&-a(p-1)\\
		-a&-b(p-1)
	\end{pmatrix}.
	$$
	The eigenvalues of above matrix gives $\lambda_\pm$.
	
	Next take vectors that are constant on each of $V_3,\ldots,V_t$, vanish on the first four cells, and whose constants sum to zero. This space has dimension $t-3$ and carries the eigenvalue $-bp$.
	
	The remaining three-dimensional symmetric space consists of vectors having a common value on $\{u,v\}$, a common value on $(V_1\setminus\{u\})\cup(V_2\setminus\{v\})$, and a common value on every one of $V_3,\ldots,V_t$. Direct row-sum calculation gives $M_{t,p}$. The determinant displayed in the statement follows by simplification. The listed multiplicities add up to $pt$, completing the spectrum.
\end{proof}

\begin{figure}[H]
	\centering
	\begin{tikzpicture}[scale=1.05,every node/.style={circle,draw,minimum size=5.5mm,inner sep=1pt}]
		\node (a1) at (-3,1) {$u$};\node (a2) at (-3,-1) {$a$};
		\node (b1) at (0,.65) {$v$};\node (b2) at (0,-0.65) {$b$};
		\node (c1) at (3,1) {$c$};\node (c2) at (3,-1) {$d$};
		\foreach \x in {a1,a2}{\foreach \y in {c1,c2}{\draw (\x)--(\y);}}
		\foreach \x in {b1,b2}{\foreach \y in {c1,c2}{\draw (\x)--(\y);}}
		\draw (a1)--(b2);\draw (a2)--(b1);\draw (a2)--(b2);
		\draw[dashed] (a1)--node[above,draw=none,rectangle, yshift=-4.5mm] {$e$} (b1);
		\node[draw=none,rectangle] at (-3,-1.7) {$V_1$};
		\node[draw=none,rectangle] at (0,-1.7) {$V_2$};
		\node[draw=none,rectangle] at (3,-1.7) {$V_3$};
	\end{tikzpicture}
	\caption{The smallest exceptional case $K_{2,2,2}-e$. The dashed edge $e$ is removed. }
	\label{fig:K222-minus-edge}
\end{figure}

For $t=3$, Theorem~\ref{thm:ISI-general-multipartite-minus-edge} gives a particularly transparent correction of the earlier five-dimensional quotient calculation. Also, note that graph in Figure \ref{fig:K222-minus-edge} is the unique exception to the general $ISI$ energy-increase statement below.
\begin{corollary}\label{cor:ISI-tripartite-corrected-spectrum}
	Let $G=K_{p,p,p}-e$ with $p\geq2$, and let
	$
	a=\tfrac{2p(2p-1)}{4p-1}.
	$
	Then
	$$
	\spec(ISI(G))
	=
	\Big\{0^{[3p-5]},\lambda_+,\lambda_-,\mu_1,\mu_2,\mu_3\Big\},
	$$
	where
	$$
	\lambda_{\pm}
	=
	\frac{-p(p-1)\pm\sqrt{p^2(p-1)^2+4a^2(p-1)}}{2},
	$$
	and $\mu_1,\mu_2,\mu_3$ are the zeros of the monic cubic
	$$
	\begin{aligned}
		q_p(x)={}&x^3-p(p-1)x^2-\frac{2p^2(16p^4-23p^2+13p-2)}{(4p-1)^2}x
		-\frac{8p^4(p-1)(2p-1)^2}{(4p-1)^2}.
	\end{aligned}
	$$ 
\end{corollary}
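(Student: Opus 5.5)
This is the case $t=3$ of Theorem~\ref{thm:ISI-general-multipartite-minus-edge}, so the plan is to specialize that theorem and then compute. With $t=3$ we have $r=p(t-1)=2p$, which gives $b=r/2=p$ and $a=\frac{r(r-1)}{2r-1}=\frac{2p(2p-1)}{4p-1}$, matching the stated $a$.

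The multiplicities then specialize as follows.
\begin{itemize}
\item The zero multiplicity is $pt-t-2=3p-5$.
\item The block $(-bp)^{[t-3]}$ is empty.
\item The antisymmetric eigenvalues become $\lambda_\pm=\frac{-b(p-1)\pm\sqrt{b^2(p-1)^2+4a^2(p-1)}}{2}$ with $b=p$, which is exactly the displayed formula.
\end{itemize}
The multiset count is $3p-5+2+3=3p$, as required.

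It remains to identify $\mu_1,\mu_2,\mu_3$ as the zeros of $q_p$, that is, to show $\det(xI_3-M_{3,p})=q_p(x)$. With $t=3$,
$$
M_{3,p}=\begin{pmatrix}0&a(p-1)&ap\\ a&p(p-1)&p^2\\ 2a&2p(p-1)&0\end{pmatrix}.
$$
Write $\chi(x)=x^3-(\operatorname{Tr}M)x^2+E_2x-\det M$, where $E_2$ is the sum of the principal $2\times2$ minors.

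The trace is $p(p-1)$, which matches the $x^2$ coefficient of $q_p$.

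For $E_2$, the three principal minors are
\begin{itemize}
\item $\{1,2\}$: $-a^2(p-1)$;
\item $\{1,3\}$: $-2a^2p$;
\item $\{2,3\}$: $-2p^3(p-1)$.
\end{itemize}
Hence $E_2=-a^2(3p-1)-2p^3(p-1)$. Substituting $a^2=\frac{4p^2(2p-1)^2}{(4p-1)^2}$ and putting everything over $(4p-1)^2$, one has to verify
$$
2(3p-1)(2p-1)^2+p(p-1)(4p-1)^2=16p^4-23p^2+13p-2 .
$$
Expanding, the left side is $24p^3-32p^2+14p-2$ plus $16p^4-24p^3+9p^2-p$, which sums to $16p^4-23p^2+13p-2$. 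So the coefficient of $x$ is $-\frac{2p^2(16p^4-23p^2+13p-2)}{(4p-1)^2}$, as claimed.

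For the constant term, take the general determinant formula of Theorem~\ref{thm:ISI-general-multipartite-minus-edge} with $t=3$, $r=2p$:
$$
\det M=\frac{p^4(p-1)\,2^4\,(2p-1)^2}{2(4p-1)^2}=\frac{8p^4(p-1)(2p-1)^2}{(4p-1)^2}.
$$
The constant term of $\chi$ is $-\det M$, which matches $q_p$.

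Because that determinant formula was stated with only "follows by simplification", I would also recompute $\det M_{3,p}$ directly by cofactor expansion along the first row:
$$
\det M=-a(p-1)(0-2ap^2)+ap\bigl(2ap(p-1)-2ap(p-1)\bigr)=2a^2p^2(p-1).
$$
With $a^2=\frac{4p^2(2p-1)^2}{(4p-1)^2}$ this gives $\frac{8p^4(p-1)(2p-1)^2}{(4p-1)^2}$, consistent with the above.

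The main obstacle is purely the rational-function bookkeeping in $E_2$, and the polynomial identity above settles it.
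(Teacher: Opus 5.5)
Your proposal is correct and follows the paper's route: the corollary is the $t=3$ specialization of Theorem~\ref{thm:ISI-general-multipartite-minus-edge}, with $r=2p$, $b=p$, and the $(-bp)$ block empty. Your explicit check that $\det(xI_3-M_{3,p})=q_p(x)$ supplies the coefficient bookkeeping that the paper leaves implicit; this includes the trace, the principal-minor sum, and the direct cofactor computation $\det M_{3,p}=2a^2p^2(p-1)$.
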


\begin{proposition}\label{prop:ISI-tripartite-energy}
	Let $\mu_1$ denote the largest zero of $q_p$. Then $q_p$ has one positive and two negative zeros and
	$$
	\dot E_{ISI}(K_{p,p,p}-e)=2(\lambda_++\mu_1).
	$$
\end{proposition}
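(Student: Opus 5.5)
The plan is to read off the sign pattern of the nonzero spectrum from Corollary~\ref{cor:ISI-tripartite-corrected-spectrum} and then use the trace to collapse the energy. The spectrum consists of $0^{[3p-5]}$, the two zeros $\lambda_\pm$ of the quadratic $x^2+p(p-1)x-a^2(p-1)$, and the three zeros of $q_p$. For the quadratic, the constant term $-a^2(p-1)$ is negative because $p\ge2$. Hence $\lambda_+>0>\lambda_-$, with $|\lambda_-|=\lambda_++p(p-1)$.

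Next, I will determine the signs of the zeros of $q_p$. The coefficients of $q_p$ have the sign pattern $(+,-,-,-)$. By Descartes' rule of signs, $q_p$ has exactly one positive zero. Since $q_p$ is the characteristic polynomial of the matrix $M_{t,p}$ with $t=3$, it is similar to a symmetric block of $ISI(G)$, so all its zeros are real. The product of the zeros is $\mu_1\mu_2\mu_3=\frac{8p^4(p-1)(2p-1)^2}{(4p-1)^2}>0$. This is consistent with Theorem~\ref{thm:ISI-general-multipartite-minus-edge}, which gives $\det M_{t,p}>0$, and it also rules out a zero root. One positive zero together with a positive product forces the other two zeros to be negative. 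Alternatively, $q_p(-x)$ has coefficient pattern $(-,-,+,+)$, which has one sign change, so this route only gives one negative zero directly. I will therefore rely on the product argument together with real-rootedness.

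For real-rootedness, I will check that the $3\times 3$ block in the proof of Theorem~\ref{thm:ISI-general-multipartite-minus-edge} is diagonally similar to a symmetric matrix. The cell sizes are $2$, $2(p-1)$ and $p$, so this should follow in the same way as $M$ was symmetrized to $S$ in Theorem~\ref{thm:spectrum_complete_multipartite_updated}. The same fact follows more directly because these zeros are eigenvalues of the real symmetric matrix $ISI(G)$. The largest zero $\mu_1$ is then the positive one.

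The energy then follows from the trace. Since $\operatorname{Tr} ISI(G)=0$, the positive eigenvalues and the absolute values of the negative eigenvalues have equal sums. The positive eigenvalues are exactly $\lambda_+$ and $\mu_1$, so
$$\dot E_{ISI}(G)=2(\lambda_++\mu_1).$$
As a sanity check, the trace of the quadratic block is $-p(p-1)$ and the trace of $q_p$ is $p(p-1)$, and these do sum to zero.

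The main obstacle is verifying that the coefficients of $q_p$ have the stated signs. In particular, I need $16p^4-23p^2+13p-2>0$ for $p\ge2$. This is immediate, since $16p^4\ge 64p^2$ dominates the remaining terms. Everything else is sign bookkeeping.
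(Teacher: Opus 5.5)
Your argument is correct and is essentially the paper's proof. Both read the signs off the coefficients of $q_p$: you use Descartes' rule for exactly one positive zero, where the paper uses $\mu_1\mu_2+\mu_1\mu_3+\mu_2\mu_3<0$. You then combine this with $\mu_1\mu_2\mu_3=\det M_{3,p}>0$ and real-rootedness to get two negative zeros, and finish with $\operatorname{Tr} ISI(G)=0$.

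One correction to the alternative route you discarded. Write $q_p(x)=x^3-p(p-1)x^2-c_1x-c_0$, where $c_1,c_0>0$ are the absolute values of the linear and constant coefficients. Then $q_p(-x)=-x^3-p(p-1)x^2+c_1x-c_0$ has sign pattern $(-,-,+,-)$, which has two sign changes, not one. This is consistent with two negative zeros; a single sign change would have forced exactly one negative zero and contradicted your conclusion. So that route also works.
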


\begin{proof}
	The coefficient of $x$ in $q_p$ is negative , since
	$
	16p^4-23p^2+13p-2>0
	$
	for $p\geq2$. Hence the sum $\mu_1\mu_2+\mu_1\mu_3+\mu_2\mu_3$ is negative, so the three real eigenvalues of the symmetrizable quotient cannot all be positive. On the other hand,
	$
	\det(M_{3,p})>0,
	$
	and the Perron eigenvalue is positive. Thus $M_{3,p}$ has exactly one positive and two negative eigenvalues. Since $\lambda_+>0>\lambda_-$, the full matrix has exactly two positive eigenvalues, namely $\lambda_+$ and $\mu_1$. Its trace is $0$, so its energy is twice the sum of its positive eigenvalues.
\end{proof}

Now, we  have the multipartite edge-deletion assertion. 
\begin{theorem}\label{thm:ISI-multipartite-energy-change-corrected}
	Let $t\geq3$ and $p\geq2$. Then
	$$
	\dot E_{ISI}(K_p^{(t)}-e)>\dot E_{ISI}(K_p^{(t)})
	$$
	for every pair $(t,p)$ except $(t,p)=(3,2)$. For the exceptional graph (see Figure \ref{fig:K222-minus-edge}),
	$$
	\dot E_{ISI}(K_{2,2,2}-e)\approx15.816551<16
	=
	\dot E_{ISI}(K_{2,2,2}).
	$$
	Equivalently, the classification is
	\begin{enumerate}
		\item if $t=3$, the energy decreases for $p=2$ and increases for every $p\geq3$;
		\item if $t\geq4$, the energy increases for every $p\geq2$.
	\end{enumerate}
\end{theorem}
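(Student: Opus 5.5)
The plan is to write the $ISI$ energy of $G_{t,p}=K_p^{(t)}-e$ as twice the sum of its positive eigenvalues, and then compare that sum with $\rho(ISI(K_p^{(t)}))=(t-1)pb=r^2/2$.

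\textbf{Step 1: the energy formula.} By Theorem~\ref{thm:ISI-general-multipartite-minus-edge}, the spectrum consists of $0$, the eigenvalue $-bp$ with multiplicity $t-3$, the pair $\lambda_\pm$, and $\mu_1,\mu_2,\mu_3$. The product $\lambda_+\lambda_-=-a^2(p-1)$ is negative, so $\lambda_+>0>\lambda_-$. For the $3\times 3$ block I will copy the argument of Proposition~\ref{prop:ISI-tripartite-energy} with general $t$.
\begin{enumerate}
\item Compute the second elementary symmetric function $e_2(M_{t,p})$, the sum of the principal $2\times 2$ minors, and show it is negative for all $t\ge3$, $p\ge2$. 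This means showing that $b^2p(t-2)(p-1)\cdot 2$ is beaten by $a^2(p-1)+2a^2p(t-2)+b^2(p-1)p(t-3)$ with the correct signs.
\item Since $e_2<0$, the three eigenvalues are not all positive. Together with $\det M_{t,p}>0$ and the positive Perron eigenvalue ($M_{t,p}$ is nonnegative, irreducible, and diagonally similar to a symmetric matrix), $M_{t,p}$ has exactly one positive eigenvalue $\mu_1$.
\end{enumerate}
Because $\operatorname{Tr} ISI(G_{t,p})=0$, this gives
$$
\dot E_{ISI}(G_{t,p})=2(\lambda_++\mu_1),
$$
and the claim becomes $\lambda_++\mu_1>r^2/2$, with the reverse inequality when $(t,p)=(3,2)$.

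\textbf{Step 2: reducing to a sign condition.} Let $\chi(x)=\det(xI-M_{t,p})$. This cubic is monic, and its largest root is $\mu_1$. So for any $x_0$ exceeding the two negative roots, $\chi(x_0)<0$ implies $\mu_1>x_0$. I want a clean rational $x_0$, so first I will replace $\lambda_+$ by a rational lower bound. Write
$$
\lambda_+=\frac{2a^2}{b+\sqrt{b^2+4a^2/(p-1)}},
$$
and bound the square root above, using either $\sqrt{b^2+c}\le b+c/(2b)$ or a sharper Padé-type bound, to get $L\le\lambda_+$ with $L$ rational in $p$ and $t$. Set $x_0=r^2/2-L$. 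It then suffices to prove $\chi(x_0)<0$. Expanding with $a=r(r-1)/(2r-1)$, $b=r/2$, $r=p(t-1)$ and clearing the positive denominators (powers of $2r-1$, and whatever comes from $L$) gives a polynomial inequality $P(p,t)>0$.

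\textbf{Step 3: the polynomial inequality, which I expect to be the main obstacle.} I will split into the cases $t\ge4$ and $t=3$. For $t\ge4$, I will substitute $p=2+u$ and $t=4+s$ with $u,s\ge0$ and try to show that all coefficients of $P$ are nonnegative. Failing that, I will group the negative monomials against dominant leading terms in $r$.

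Heuristically the inequality should hold, since $\lambda_+\approx a^2/b\approx r/2$ while $r^2/2-\mu_1$ should be of order a constant times $r/t$. The margin is thin, though, and the crude bound on $\lambda_+$ may lose too much when $p$ and $t$ are both small. If so, I will verify a finite list of small cases $(t,p)$ directly from the exact formulas (the cubic and $\lambda_+$) and run the polynomial argument only beyond that range.

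For $t=3$, Corollary~\ref{cor:ISI-tripartite-corrected-spectrum} gives the explicit cubic $q_p$. There I will evaluate $q_p\bigl(2p^2-L\bigr)$, note that $r^2/2=2p^2$ when $t=3$, and show it is negative for $p\ge3$. This is a one-variable polynomial check, handled by the substitution $p=3+u$. For $p=2$, I will compute directly: $a=12/7$, $\lambda_+$ is explicit, and locating $\mu_1$ by sign changes of $q_2$ gives $2(\lambda_++\mu_1)\approx15.8166<16$. This establishes the exceptional case.
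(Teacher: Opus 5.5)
Your plan is sound, but it runs in the opposite direction from the paper's proof.

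\textbf{How the two routes differ.} You make the energy exact, $\dot E_{ISI}(G_{t,p})=2(\lambda_++\mu_1)$. You treat $\mu_1$ exactly through the sign of $\chi$ at a test point, and replace $\lambda_+$ by a rational lower bound $L$. The paper does the reverse:
\begin{enumerate}
\item It uses only $\dot E_{ISI}(G_{t,p})\ge 2(\lambda_++\rho)$.
\item It bounds $\rho$ from below by the Rayleigh quotient of $\mathbf 1$. The bookkeeping $2(r-1)(b-a)=a$ gives $\rho\ge r^2/2-2(a+b)/(tp)$ in closed form.
\item It then handles $\lambda_+$ exactly. Everything reduces to $f(c)<0$ for $f(x)=x^2+b(p-1)x-a^2(p-1)$ at $c=2(a+b)/(tp)$, i.e.\ to the degree-nine polynomial $P(t,p)$.
\end{enumerate}

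\textbf{Your Step~1.} It is cheaper than you suggest. One has $e_2(M_{t,p})=-a^2(p-1)-2a^2p(t-2)-b^2p(p-1)(t-1)$, and every term is negative, so nothing has to be ``beaten.'' It is also only really needed for the exceptional pair $(3,2)$. For the strict increase, the inequality $\dot E\ge 2(\lambda_++\mu_1)$ suffices.

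\textbf{Your Step~3.} This is where your route pays. Clearing denominators in $\chi(r^2/2-L)$ gives a polynomial of much higher degree than $P$. Also, at $(4,2)$ the tangent-line bound leaves little room: $L\approx1.357$ against $r^2/2-\mu_1\approx1.285$, a gap of about $0.07$. At the same point the paper's margin is $\lambda_+-c\approx0.18$. So coefficient positivity after shifting to $(4,2)$ is not guaranteed, and your small-case fallback may well be needed.

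\textbf{A shorter finish.} You can avoid the heavy algebra by borrowing the paper's Rayleigh bound.
\begin{enumerate}
\item Since $a<b$, one has $c<4b/(tp)=2(t-1)/t<2$. So it suffices that $\lambda_+\ge 2$.
\item Because $f$ opens upward and $f(0)<0$, $\lambda_+\ge 2$ is equivalent to $f(2)\le 0$, i.e.\ $(p-1)(a^2-r)\ge 4$.
\item This holds whenever $a^2\ge r+4$. That is true for every $r\ge 8$: at $r=8$ we have $a^2=3136/225>12$, and $a^2-r$ is increasing from there on.
\item The pairs with $r=p(t-1)<8$ are only $(3,2)$, $(3,3)$ and $(4,2)$. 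For the last two, direct evaluation gives $\lambda_+\approx1.886>c\approx1.273$ and $\lambda_+\approx1.613>c\approx1.432$. The pair $(3,2)$ is the exceptional case you already treat by hand.
\end{enumerate}
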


\begin{proof}
	Let $r=p(t-1)$ and retain the notation $a,b$ above. The original regular graph satisfies
	$
	\dot E_{ISI}(K_p^{(t)})=r^2.
	$
	The positive number $\lambda_+$ in Theorem~\ref{thm:ISI-general-multipartite-minus-edge} is an eigenvalue of $ISI(G_{t,p})$ with a sign-changing eigenvector, and hence it is distinct from the Perron root $\rho$. Since the trace is $0$, so we have
	$$
	\dot E_{ISI}(G_{t,p})
	=2\sum_{\lambda_i>0}\lambda_i
	\geq2(\lambda_++\rho).
	$$
	The total weighted edge sum of the original graph is $tp\,r^2/4$. By deleting $e$ removes one edge of weight $b$ and changes the weights on exactly $2(r-1)$ surviving edges from $b$ to $a$. Since
	$
	2(r-1)(b-a)=a,
	$
	the weighted edge sum of $G_{t,p}$ is
	$$
	\frac{tp\,r^2}{4}-a-b.
	$$
	Therefore, the Rayleigh quotient of the all-ones vector gives
	$$
	\rho\geq\frac{r^2}{2}-\frac{2(a+b)}{tp}.
	$$
	Thus, it is enough to prove that
	$$
	\lambda_+>\frac{2(a+b)}{tp}.
	$$
	Now $\lambda_+$ is the positive zero of
	$$
	f(x)=x^2+b(p-1)x-a^2(p-1).
	$$
	Let
	$
	c=\tfrac{2(a+b)}{tp}.
	$
	A direct simplification gives
	$$
	f(c)
	=-\frac{(t-1)^2P(t,p)}{2t^2(2pt-2p-1)^2},
	$$
	where
	$$
	\begin{aligned}
		P(t,p)={}&2p^5t^4-4p^5t^3+2p^5t^2-2p^4t^4-8p^4t^3+18p^4t^2-8p^4t+12p^3t^3-8p^3t^2-2p^3t\\
		&-44p^2t^2+71p^2t-32p^2+51pt-48p-18.
	\end{aligned}
	$$
	For $t=3$ and $p=3+v$, $v\geq0$, this becomes
	$$
	P(3,3+v)
	=72v^5+840v^4+3846v^3+8479v^2+8697v+3060>0.
	$$
	For $t=4+u$ and $p=2+v$, where $u,v\geq0$, write
	$$
	P(4+u,2+v)=A_4(v)u^4+A_3(v)u^3+A_2(v)u^2+A_1(v)u+A_0(v),
	$$
	with
	$$
	\begin{aligned}
		A_4(v)&=2v^5+18v^4+64v^3+112v^2+96v+32,\\
		A_3(v)&=28v^5+240v^4+812v^3+1352v^2+1104v+352,\\
		A_2(v)&=146v^5+1190v^4+3816v^3+5972v^2+4496v+1264,\\
		A_1(v)&=336v^5+2600v^4+7870v^3+11419v^2+7607v+1650,\\
		A_0(v)&=288v^5+2112v^4+6008v^3+7948v^2+4396v+470.
	\end{aligned}
	$$
	Every coefficient is positive. Hence $P(t,p)>0$ for $t\geq4$, $p\geq2$, and also for $t=3$, $p\geq3$. Thus $f(c)<0$ in precisely these ranges. Since $f$ opens upward and has one positive zero, $c<\lambda_+$, which yields
	$
	\dot E_{ISI}(G_{t,p})>r^2.
	$
	For $(t,p)=(3,2)$, direct evaluation of the five nonzero quotient eigenvalues gives
	$
	6.923641,~0.984635,~-0.829363,~-2.984635,$ and $-4.094278,
	$
	so the energy is approximately $15.816551<16$.
\end{proof}

Bilal and Munir \cite{bilalpak} also proved that the $ISI$ energy of a regular tripartite graph $K_{p,p,p}$ decreases upon edge deletion (see Theorems 6 and 7 of \cite{bilalpak}). However, for $t=3,$ Theorem \ref{thm:ISI-multipartite-energy-change-corrected} discards their results.

\begin{table}[H]
	\centering
	\small
	\begin{tabular}{|c|c|c|c|c|c|}
		\hline
		$t$ & $p$ & Order $tp$ & $\dot E_{ISI}(K_p^{(t)})$ & $\dot E_{ISI}(K_p^{(t)}-e)$ & Difference\\
		\hline
		$3$&$2$&$6$&$16.000000$&$15.816551$&$-0.183449$\\
		\hline
		$3$&$3$&$9$&$36.000000$&$37.512601$&$1.512601$\\
		\hline
		$3$&$4$&$12$&$64.000000$&$67.309689$&$3.309689$\\
		\hline
		$4$&$2$&$8$&$36.000000$&$36.654775$&$0.654775$\\
		\hline
		$4$&$3$&$12$&$81.000000$&$84.314847$&$3.314847$\\
		\hline
		$5$&$2$&$10$&$64.000000$&$65.642115$&$1.642115$\\
		\hline
		$6$&$2$&$12$&$100.000000$&$102.706479$&$2.706479$\\
		\hline
	\end{tabular}
	\caption{Numerical verification of Theorem~\ref{thm:ISI-multipartite-energy-change-corrected}.}
	\label{tab:ISI-multipartite-edge-delete}
\end{table}

Table \ref{tab:ISI-multipartite-edge-delete} gives the numerical validation of Theorem \ref{thm:ISI-multipartite-energy-change-corrected}. The only decreasing case is $K_{2,2,2}-e$.
\subsection{Crown and crown multipartite graphs}

Let $C_{p,p}$ be obtained from $K_{p,p}$ by deleting a perfect matching, Figure \ref{fig:crown33} shows $C_{3,3}=K_{3,3}-3K_2$. Every vertex has degree $d=p-1$. Let
$
c=\phi(d,d).
$
Then
$$
\dot A(C_{p,p})
=
\begin{pmatrix}
	0_p&c(J_p-I_p)\\
	c(J_p-I_p)&0_p
\end{pmatrix}.
$$
Next, we discuss its weighted energy.
\begin{proposition}\label{prop:crown-bipartite-corrected}
	For $p\geq2$,
	$$
	\spec(\dot A(C_{p,p}))
	=
	\Big\{(p-1)c,\;-(p-1)c,\;c^{[p-1]},\;(-c)^{[p-1]}\Big\},
	$$
	and therefore
	$
	\dot E_\phi(C_{p,p})=4(p-1)c.
	$
\end{proposition}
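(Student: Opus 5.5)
The matrix already has the Kronecker form $\dot A(C_{p,p})=c\,(A(K_2)\otimes(J_p-I_p))$, where $A(K_2)=\begin{pmatrix}0&1\\1&0\end{pmatrix}$. This form comes from the block display preceding the statement, since every edge of $C_{p,p}$ joins two vertices of degree $d=p-1$ and so carries the same weight $c=\phi(d,d)$. The plan is to read the spectrum off this tensor decomposition. The eigenvalues of a Kronecker product of two symmetric matrices are all pairwise products of their eigenvalues, with multiplicities multiplying. Here $\spec(A(K_2))=\{1,-1\}$, and $\spec(J_p-I_p)=\{(p-1)^{[1]},(-1)^{[p-1]}\}$: the all-ones vector gives $p-1$, and $\mathbf 1^{\perp}$ gives $-1$, exactly as in the proof of Theorem~\ref{cor:regular_complete_multipartite_updated}.

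Multiplying these out and scaling by $c$ gives the multiset
$$
\{(p-1)c,\;-(p-1)c,\;(-c)^{[p-1]},\;c^{[p-1]}\}.
$$
The eigenvalue $(p-1)c\cdot 1$ comes from $1\cdot(p-1)$, and $-(p-1)c$ from $(-1)\cdot(p-1)$. The value $-c$ with multiplicity $p-1$ comes from $1\cdot(-1)$, and $c$ with multiplicity $p-1$ from $(-1)\cdot(-1)$. This is the claimed spectrum. As a check, the multiplicities sum to $2p$.

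To avoid quoting the Kronecker eigenvalue fact, one can instead write explicit eigenvectors $(x,\pm x)$ with $x=\mathbf 1$ or $x\perp\mathbf 1$. Then
$$
\dot A\begin{pmatrix}x\\ \pm x\end{pmatrix}=\pm c\begin{pmatrix}(J_p-I_p)x\\ \pm (J_p-I_p)x\end{pmatrix},
$$
and these $2p$ vectors are linearly independent.

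For the energy, sum the absolute values:
$$
2(p-1)|c|+2(p-1)|c|=4(p-1)|c|,
$$
which equals $4(p-1)c$ under the standing positivity assumption of this section. There is no real obstacle. The only care needed is at $p=2$, where $C_{2,2}=2K_2$ is disconnected and the eigenvalues coincide ($c$ and $-c$ each with multiplicity $2$). The multiset formula still holds in that case, since the argument never uses connectivity or distinctness.
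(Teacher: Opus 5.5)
Your proof is correct and is essentially the paper's argument: the paper uses the fact that $\begin{pmatrix}0&B\\B&0\end{pmatrix}$ has eigenvalues $\pm\lambda_i(B)$ with $B=c(J_p-I_p)$, which is exactly your factorization $c\,A(K_2)\otimes(J_p-I_p)$ (also the $t=2$ case of Theorem~\ref{thm:crown-multipartite-spectrum-corrected}), and your explicit eigenvectors $(x,\pm x)^{\top}$ are the standard proof of that block fact. Your energy computation with $|c|$, which reduces to $4(p-1)c$ under the section's positivity assumption, and your remark on the case $p=2$ are both fine.
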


\begin{proof}
	The eigenvalues of $J_p-I_p$ are $p-1$ once and $-1$ with multiplicity $p-1$. For a symmetric matrix $B$, the block matrix
	$$
	\begin{pmatrix}0&B\\B&0\end{pmatrix}
	$$
	has eigenvalues $\pm\lambda_i(B)$. Substituting $B=c(J_p-I_p)$ gives the spectrum. Summing absolute values yields the claimed energy.
\end{proof}

\begin{figure}[H]
	\centering
	\begin{tikzpicture}[scale=1.0,every node/.style={circle,draw,minimum size=6mm,inner sep=1pt}]
		\node (u1) at (-2,1.5) {$u_1$};\node (u2) at (-2,0) {$u_2$};\node (u3) at (-2,-1.5) {$u_3$};
		\node (v1) at (2,1.5) {$v_1$};\node (v2) at (2,0) {$v_2$};\node (v3) at (2,-1.5) {$v_3$};
		\draw (u1)--(v2);\draw (u1)--(v3);
		\draw (u2)--(v1);\draw (u2)--(v3);
		\draw (u3)--(v1);\draw (u3)--(v2);
		\draw[dashed] (u1)--(v1);\draw[dashed] (u2)--(v2);\draw[dashed] (u3)--(v3);
		\node[draw=none,rectangle] at (0,-2.3) {dashed edges form the deleted perfect matching};
	\end{tikzpicture}
	\caption{The crown graph $C_{3,3}=K_{3,3}-3K_2$.}
	\label{fig:crown33}
\end{figure}

We now define the multipartite version unambiguously. Let
$
V_i=\{u_{i1},u_{i2},\ldots,u_{ip}\},~ 1\leq i\leq t,
$
be the partite sets of $K_p^{(t)}$. Delete every edge $u_{ik}u_{jk}$ with $i\neq j$ and the same second index $k$. The resulting graph is denoted by $C_p^{(t)}$. Each vertex has degree
$
d=(t-1)(p-1).
$
With $c=\phi(d,d)$, its weighted adjacency matrix has the Kronecker representation
$$
\dot A(C_p^{(t)})
=c\,(J_t-I_t)\otimes(J_p-I_p).
$$
This observation gives the whole spectrum at once.

\begin{theorem}\label{thm:crown-multipartite-spectrum-corrected}
	Let $t,p\geq2$, let
	$
	d=(t-1)(p-1),
	$
	and put
	$
	c=\phi(d,d).
	$
	Then
	$
	\spec(\dot A(C_p^{(t)}))
	=
	\Big\{
	((t-1)(p-1)c)^{[1]},
	(-(t-1)c)^{[p-1]},
	(-(p-1)c)^{[t-1]},
	c^{[(t-1)(p-1)]}
	\Big\}.
	$
	Consequently,
	$
	\dot E_\phi(C_p^{(t)})
	=
	4(t-1)(p-1)|c|.
	$
	In particular, if $c>0$, then
	$
	\dot E_\phi(C_p^{(t)})
	=
	4(t-1)(p-1)c.
	$ 
	If $c\neq0$, then $\dot A(C_p^{(t)})$ is nonsingular. Moreover, if $c>0$, its inertia is
	$$
	\operatorname{In}(\dot A(C_p^{(t)}))
	=
	\left(
	1+(t-1)(p-1),\,
	p+t-2,\,
	0
	\right).
	$$
	If $c<0$, then
	$$
	\operatorname{In}(\dot A(C_p^{(t)}))
	=
	\left(
	p+t-2,\,
	1+(t-1)(p-1),\,
	0
	\right).
	$$ 
\end{theorem}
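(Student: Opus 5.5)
The plan is to use the Kronecker representation $\dot A(C_p^{(t)})=c\,(J_t-I_t)\otimes(J_p-I_p)$ displayed just before the statement. I would first justify it. Index the vertices by pairs $(i,k)$ with $1\le i\le t$ and $1\le k\le p$. Then $u_{ik}\sim u_{jl}$ in $C_p^{(t)}$ exactly when $i\neq j$ and $k\neq l$. The $((i,k),(j,l))$ entry of $(J_t-I_t)\otimes(J_p-I_p)$ is $(1-\delta_{ij})(1-\delta_{kl})$, which is precisely this adjacency indicator. Every vertex has degree $(t-1)(p-1)$, so every edge carries the same weight $c=\phi(d,d)$, and the representation follows.

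Next I would apply the standard fact that the eigenvalues of $B\otimes C$ are the products $\beta_i\gamma_j$, with multiplicities multiplying. This holds because $(x\otimes y)$ is an eigenvector whenever $x$ and $y$ are, and for symmetric factors orthonormal eigenbases tensor to an orthonormal eigenbasis. The spectrum of $J_t-I_t$ is $\{(t-1)^{[1]},(-1)^{[t-1]}\}$, and that of $J_p-I_p$ is $\{(p-1)^{[1]},(-1)^{[p-1]}\}$. The four products are $(t-1)(p-1)$ with multiplicity $1$, $-(t-1)$ with multiplicity $p-1$, $-(p-1)$ with multiplicity $t-1$, and $1$ with multiplicity $(t-1)(p-1)$. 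Multiplying by $c$ gives the stated spectrum, and the multiplicities sum to $tp$ as a check.

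The energy then follows by summing absolute values:
$$|c|\big[(t-1)(p-1)+(p-1)(t-1)+(t-1)(p-1)+(t-1)(p-1)\big]=4(t-1)(p-1)|c|.$$
Nonsingularity when $c\neq0$ is immediate, since every listed eigenvalue is a nonzero multiple of $c$; here $t,p\ge2$ guarantees $t-1,p-1\ge1$. For the inertia with $c>0$, the positive eigenvalues are $(t-1)(p-1)c$ and $c^{[(t-1)(p-1)]}$, giving $1+(t-1)(p-1)$ of them. The negative ones number $(p-1)+(t-1)=p+t-2$. For $c<0$ all signs flip, which swaps $n_+$ and $n_-$.

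The only step needing care is verifying the Kronecker form. In particular, one must check that deleting the edges $u_{ik}u_{jk}$ removes exactly the pairs with equal second index, and that no degree irregularity occurs; the degree is uniformly $(t-1)(p-1)$. Everything after that is routine bookkeeping.
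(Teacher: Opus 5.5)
Your proposal is correct and follows essentially the same route as the paper: you justify the Kronecker form $c\,(J_t-I_t)\otimes(J_p-I_p)$, which you verify entrywise via the indicator $(1-\delta_{ij})(1-\delta_{kl})$ rather than block by block. You then take products of the eigenvalues of $J_t-I_t$ and $J_p-I_p$ using tensored orthonormal eigenbases, and read off the energy, nonsingularity and inertia by sign counting, exactly as the paper does.
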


\begin{proof}
	Let the partite sets of $C_p^{(t)}$ be
	$
	V_i=\{u_{i1},u_{i2},\ldots,u_{ip}\},~ i=1,\ldots,t.
	$
	The graph $C_p^{(t)}$ is obtained from the complete $t$-partite graph
	$
	K_{\underbrace{p,\ldots,p}_{t}}
	$
	by removing, for every fixed $k\in\{1,\ldots,p\}$, all edges joining vertices
	$u_{ik}$ and $u_{jk},~ i\neq j.$
	Thus a vertex $u_{ik}$ is adjacent to the vertices $u_{j\ell}$ precisely when
	$
	j\neq i
	$ and $
	\ell\neq k.
	$
	 
	For every one of the $t-1$ partite sets different from $V_i$, there are exactly $p-1$ allowable choices of the second index $\ell$. Hence each vertex has degree
	$
	d=(t-1)(p-1).
	$
	The graph is therefore regular. Since every edge has endpoints of degree $d$, every nonzero entry of the degree-based weighted adjacency matrix has the same value
	$
	c=\phi(d,d).
	$
	
	We order the vertices part by part as
	$
	V_1,V_2,\ldots,V_t.
	$
	There are no edges within a partite set, so every diagonal $p\times p$ block of $\dot A(C_p^{(t)})$ is $0_p$. Consider now two distinct partite sets $V_i$ and $V_j$. A vertex $u_{ik}$ is adjacent to every vertex of $V_j$ except $u_{jk}$. Thus, relative to the natural ordering of the vertices in the two parts, the $(i,j)$ block is
	$
	c(J_p-I_p).
	$
	Consequently,
	$$
	\dot A(C_p^{(t)})
	=
	\begin{pmatrix}
		0_p & c(J_p-I_p) & \cdots & c(J_p-I_p)\\
		c(J_p-I_p) & 0_p & \cdots & c(J_p-I_p)\\
		\vdots & \vdots & \ddots & \vdots\\
		c(J_p-I_p) & c(J_p-I_p) & \cdots & 0_p
	\end{pmatrix}.
	$$
	 The block pattern is exactly that of $J_t-I_t$, and hence
	$$
	\dot A(C_p^{(t)})
	=
	c(J_t-I_t)\otimes(J_p-I_p).
	$$
	 The matrix $J_t$ has eigenvalue $t$ corresponding to the all-ones vector $\mathbf 1_t$, and eigenvalue $0$ on the orthogonal complement $\mathbf 1_t^\perp$. Therefore
	$
	\spec(J_t-I_t)
	=
	\{(t-1)^{[1]},(-1)^{[t-1]}\}.
	$
	Similarly,
	$
	\spec(J_p-I_p)
	=
	\{(p-1)^{[1]},(-1)^{[p-1]}\}.
	$
	
	To make the multiplicities transparent, let
	$
	x_0=\tfrac{1}{\sqrt t}\mathbf 1_t
	$
	and choose an orthonormal basis
	$
	x_1,\ldots,x_{t-1}
	$
	of $\mathbf 1_t^\perp$. Then
	$
	(J_t-I_t)x_0=(t-1)x_0
	$
	and
	$
	(J_t-I_t)x_i=-x_i,
	$ for $ i=1,\ldots,t-1.
	$
	Likewise, let
	$
	y_0=\tfrac{1}{\sqrt p}\mathbf 1_p
	$
	and choose an orthonormal basis
	$
	y_1,\ldots,y_{p-1}
	$
	of $\mathbf 1_p^\perp$. Then
	$
	(J_p-I_p)y_0=(p-1)y_0
	$
	and
	$
	(J_p-I_p)y_j=-y_j,$ for $j=1,\ldots,p-1.
	$
	
	For arbitrary vectors $x$ and $y$, the Kronecker-product identity
	$$
	(A\otimes B)(x\otimes y)
	=
	(Ax)\otimes(By)
	$$
	implies that if $x$ and $y$ are eigenvectors of $A$ and $B$ with respective eigenvalues $\alpha$ and $\beta$, then $x\otimes y$ is an eigenvector of $A\otimes B$ with eigenvalue $\alpha\beta$. Applying this to
	$
	(J_t-I_t)\otimes(J_p-I_p),
	$
	we obtain four types of eigenvectors.
	
	First,
	$
	x_0\otimes y_0
	$
	has eigenvalue
	$
	(t-1)(p-1).
	$
	After multiplication by $c$, this yields the simple eigenvalue
	$
	(t-1)(p-1)c.
	$
	
	Second, for each $j=1,\ldots,p-1$,
	$
	x_0\otimes y_j
	$
	has eigenvalue
	$
	(t-1)(-1)=-(t-1).
	$
	Thus
	$
	-(t-1)c
	$
	occurs with multiplicity $p-1$.
	
	Third, for each $i=1,\ldots,t-1$,
	$
	x_i\otimes y_0
	$
	has eigenvalue
	$
	(-1)(p-1)=-(p-1).
	$
	Hence
	$
	-(p-1)c
	$
	has multiplicity $t-1$.
	
	Finally, for
	$
	1\leq i\leq t-1,
	$ and $
	1\leq j\leq p-1,
	$
	the vector
	$
	x_i\otimes y_j
	$
	has eigenvalue
	$
	(-1)(-1)=1.
	$
	Therefore
	$
	c
	$
	occurs with multiplicity
	$
	(t-1)(p-1).
	$	
	The total multiplicity is
	$
	1+(p-1)+(t-1)+(t-1)(p-1) =tp,
	$
	which is exactly the order of $C_p^{(t)}$. Thus no further eigenvalues occur, and
	$$
	\spec(\dot A(C_p^{(t)}))
	=
	\Big\{
	((t-1)(p-1)c)^{[1]},
	(-(t-1)c)^{[p-1]},
	(-(p-1)c)^{[t-1]},
	c^{[(t-1)(p-1)]}
	\Big\}.
	$$
	Clearly, 
	$$
	\begin{aligned}
		\dot E_\phi(C_p^{(t)})
		={}&
		(t-1)(p-1)|c|
		+
		(p-1)(t-1)|c|\\
		&+
		(t-1)(p-1)|c|
		+
		(t-1)(p-1)|c|=4(t-1)(p-1)|c|.
	\end{aligned}
	$$ 
	If $\phi(d,d)>0$, as happens for the standard positive degree-based weights, then $c>0$, and this reduces to
	$
	\dot E_\phi(C_p^{(t)})
	=
	4(t-1)(p-1)c.
	$
	
	Finally, we determine nonsingularity and inertia. Since $t,p\geq2$, all three numbers
	$
	t-1,~ p-1,$ and $ (t-1)(p-1)
	$
	are strictly positive. Hence, if $c\neq0$, none of
	$
	(t-1)(p-1)c,~
	-(t-1)c,~
	-(p-1)c,$ and $	c$
	is zero. It follows that
	$
	0\notin\spec(\dot A(C_p^{(t)})),
	$
	and therefore $\dot A(C_p^{(t)})$ is nonsingular.
	
	Assume first that $c>0$. Then the positive eigenvalues are
	$
	(t-1)(p-1)c
	$
	with multiplicity $1$, together with
	$
	c
	$
	with multiplicity $(t-1)(p-1)$. Hence the number of positive eigenvalues is
	$
	n_+
	=
	1+(t-1)(p-1).
	$
	The negative eigenvalues are
	$
	-(t-1)c
	$
	with multiplicity $p-1$ and
	$
	-(p-1)c
	$
	with multiplicity $t-1$. Thus
	$
	n_-
	=
	(p-1)+(t-1)
	=
	p+t-2.
	$
	There are no zero eigenvalues, so
	$
	n_0=0.
	$
	Consequently,
	$$
	\operatorname{In}(\dot A(C_p^{(t)}))
	=
	\left(
	1+(t-1)(p-1),\,
	p+t-2,\,
	0
	\right).
	$$
	
	If $c<0$, multiplication of the matrix by the negative scalar $c$ reverses the sign of every nonzero eigenvalue. Hence the positive and negative counts are interchanged, giving
	$$
	\operatorname{In}(\dot A(C_p^{(t)}))
	=
	\left(
	p+t-2,\,
	1+(t-1)(p-1),\,
	0
	\right).
	$$ 
\end{proof}

The following is an immediate consequence of the above theorem.
\begin{corollary}\label{cor:crown-integrality-corrected}
	For $t,p\geq2$, the weighted matrix $\dot A(C_p^{(t)})$ is integral if and only if
	$
	\phi(d,d)\in\mathbb Z,$ for $d=(t-1)(p-1).
	$
	In particular:
	\begin{enumerate}
		\item the adjacency, $AG$, $GA$, first Zagreb, and second Zagreb spectra are always integral;
		\item the $ISI$ spectrum is integral if and only if $d$ is even;
		\item the Randi\`c spectrum with $\phi(x,y)=1/\sqrt{xy}$ is integral if and only if $d=1$;
		\item the Sombor spectrum is nonintegral for every $d\geq1$ because $\phi(d,d)=d\sqrt2$.
	\end{enumerate}
\end{corollary}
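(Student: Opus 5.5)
The plan is to read integrality directly off the explicit spectrum in Theorem~\ref{thm:crown-multipartite-spectrum-corrected}. With $c=\phi(d,d)$ and $d=(t-1)(p-1)$, the eigenvalues are
\[
(t-1)(p-1)c,\qquad -(t-1)c,\qquad -(p-1)c,\qquad c .
\]
Every eigenvalue is therefore an integer multiple of $c$, so $c\in\mathbb Z$ makes the whole spectrum integral.

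For the converse, the key observation is about the eigenvalue $c$ itself. It occurs with multiplicity $(t-1)(p-1)$, and this multiplicity is at least $1$ because $t,p\ge 2$. So if the spectrum is integral, then $c$ is an integer. This is where the crown case differs from the regular multipartite case of Corollary~\ref{cor:integrality_regular_multipartite_updated}: there the criterion involved $p\phi(d,d)$, whereas here $\phi(d,d)$ appears unscaled as an eigenvalue. That proves the main equivalence.

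The listed cases then reduce to evaluating $\phi(d,d)$ for each weight.
\begin{enumerate}
\item For the adjacency, $AG$ and $GA$ weights, $\phi(d,d)=1$. For the first Zagreb weight it is $2d$, and for the second Zagreb weight it is $d^2$. All of these are integers for every $d$.
\item For $ISI$, $\phi(d,d)=\tfrac{d^2}{2d}=\tfrac d2$, which is an integer exactly when $d$ is even.
\item For the Randi\`c weight, $\phi(d,d)=\tfrac1d$. Since $d\ge1$, this is an integer iff $d=1$, which corresponds to $t=p=2$.
\item For the Sombor weight, $\phi(d,d)=d\sqrt2$, which is irrational for every positive integer $d$.
\end{enumerate}

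There is no genuine obstacle in this argument. The only point needing care is the converse direction, where we must record that the multiplicity $(t-1)(p-1)$ of the eigenvalue $c$ is positive; this is exactly where the hypothesis $t,p\ge2$ is used.
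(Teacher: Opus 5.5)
Your proposal is correct and follows the paper's proof essentially verbatim. Integrality of $c=\phi(d,d)$ makes all four eigenvalue types integral, and conversely the eigenvalue $c$ itself, of multiplicity $(t-1)(p-1)\geq 1$, forces $c\in\mathbb Z$. The special cases then come from evaluating $\phi(d,d)$ for each weight.
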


\begin{proof}
	The value $c$ itself occurs as an eigenvalue with multiplicity $(t-1)(p-1)$, so integrality of the spectrum forces $c\in\mathbb Z$. Conversely, if $c$ is an integer, all four eigenvalue types in Theorem~\ref{thm:crown-multipartite-spectrum-corrected} are integers. The listed special cases follow by substituting
	$$
	\phi(d,d)=1,~1,~1,~2d,~ d^2,~\frac d2,~\frac1d,~ d\sqrt2,
	$$
	respectively.
\end{proof}

\begin{table}[H]
	\centering
	\small
	\begin{tabular}{|c|c|c|c|c|c|}
		\hline
		Graph/weight & $d$ & $c=\phi(d,d)$ & Distinct weighted eigenvalues & Energy & Integral?\\
		\hline
		$C_{3,3}$, $A$ & $2$ & $1$ & $\pm2,\pm1$ & $8$ & Yes\\
		\hline
		$C_{3,3}$, $ISI$ & $2$ & $1$ & $\pm2,\pm1$ & $8$ & Yes\\
		\hline
		$C_{4}^{(3)}$, $A$ & $6$ & $1$ & $6,-2,-3,1$ & $24$ & Yes\\
		\hline
		$C_{4}^{(3)}$, $ISI$ & $6$ & $3$ & $18,-6,-9,3$ & $72$ & Yes\\
		\hline
		$C_{4}^{(3)}$, $R$ & $6$ & $\tfrac16$ & $1,-\tfrac13,-\tfrac12,\tfrac16$ & $4$ & No\\
		\hline
		$C_{4}^{(3)}$, $S$ & $6$ & $6\sqrt2$ & $36\sqrt2,-12\sqrt2,-18\sqrt2,6\sqrt2$ & $144\sqrt2$ & No\\
		\hline
	\end{tabular}
	\caption{Numerical and exact checks for the crown formulas.}
	\label{tab:crown-numerics}
\end{table}
Table \ref{tab:crown-numerics} shows numerical values for crown multipartite graphs. The multiplicities are given by Theorem~\ref{thm:crown-multipartite-spectrum-corrected}.

\subsection{Adding an edge to a star}
Theorem 10 of Bilal and Munir \cite{bilalpak} related to $ISI$ energy of star graph does not make sense, since deleting an edge is not possible as we get a disconnected graph. Rather, the author deleted an edge and they omitted the isolated vertex, thereby it is not logical to compare the $ISI$ energy between two graphs of different order.   It is therefore cleaner to compare the star $(S_{n}\cong K_{1,n-1})$ with the connected unicyclic graph ($S_{n}^{+}$) obtained by adding an edge between two leaves. Let
$
S_n=K_{1,n-1}
$
and let $S_n^+$ be obtained by joining two leaves, where $n\geq4$.

For the $ISI$ matrix, each edge of $S_n$ has weight $\tfrac{n-1}{n}$, and hence
$$
\spec(ISI(S_n))
=
\left\{\frac{(n-1)^{3/2}}{n},\;0^{[n-2]},\;-\frac{(n-1)^{3/2}}{n}\right\}.
$$
Thus
$$
\dot E_{ISI}(S_n)=\frac{2(n-1)^{3/2}}{n}.
$$

In $S_n^+$ the two joined leaves have degree $2$, the other $n-3$ leaves have degree $1$, and the center has degree $n-1$, see Figure \ref{fig:star-plus}.
\begin{figure}[H]
	\centering
	\begin{tikzpicture}[scale=1.05,every node/.style={circle,draw,minimum size=6mm,inner sep=1pt}]
		\node (c) at (0,0) {$z$};
		\node (a) at (-2,1.6) {$u$};\node (b) at (2,1.6) {$v$};
		\node (l1) at (-2,-1.5) {$w_1$};\node (l2) at (0,-2.1) {$\cdots$};\node (l3) at (2,-1.5) {$w_{n-3}$};
		\foreach \x in {a,b,l1,l2,l3}{\draw (c)--(\x);}
		\draw[thick] (a)--node[above,draw=none,rectangle] {added edge} (b);
	\end{tikzpicture}
	\caption{The unicyclic graph $S_n^+$.}
	\label{fig:star-plus}
\end{figure}
Let
$$
\alpha=\frac{2(n-1)}{n+1},
\qquad
\beta=\frac{n-1}{n}.
$$
With the center first, the two joined leaves next, and the remaining leaves last,
$$
ISI(S_n^+)=
\begin{pmatrix}
	0&\alpha&\alpha&\beta J_{1\times(n-3)}\\
	\alpha&0&1&0\\
	\alpha&1&0&0\\
	\beta J_{(n-3)\times1}&0&0&0_{n-3}
\end{pmatrix}.
$$

Next, we discuss its $ISI$ spectrum.
\begin{theorem}\label{thm:star-plus-correct-spectrum}
	For $n\geq4$,
	$
	\spec(ISI(S_n^+))
	=
	\Big\{0^{[n-4]},\;-1,\;\theta_1,\theta_2,\theta_3\Big\},
	$
	where $\theta_1,\theta_2,\theta_3$ are the zeros of
	$$
	q_n(x)
	=x^3-x^2
	-\frac{(n-1)^3(n^2+8n+3)}{n^2(n+1)^2}x
	+\frac{(n-3)(n-1)^2}{n^2}.
	$$
	These roots satisfy
	$
	\theta_1>\theta_2>0>\theta_3,
	$
	and
	$
	\dot E_{ISI}(S_n^+)=2(1-\theta_3).
	$
\end{theorem}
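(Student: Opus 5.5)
Order the vertices as in the displayed matrix: center $z$, the joined leaves $u,v$, then $w_1,\ldots,w_{n-3}$. We peel off the forced eigenvalues first and then work with a $3\times 3$ equitable quotient.

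\textbf{Forced eigenvalues.} By Theorem~\ref{independence Sombor eigenvalue}, the $n-3$ pendant leaves $w_i$ form an independent set with common neighbourhood $\{z\}$. Zero-sum vectors supported on them give $0$ with multiplicity $n-4$. The vector $e_u-e_v$ satisfies $ISI(S_n^+)(e_u-e_v)=-(e_u-e_v)$, because $u,v$ have identical weights to $z$ and are joined by an edge of weight $\phi(2,2)=1$. This gives the eigenvalue $-1$.

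\textbf{Quotient and its characteristic polynomial.} The orthogonal complement of these $n-3$ vectors is spanned by $e_z$, $e_u+e_v$ and $\mathbf 1_W$, and it is invariant. The equitable quotient on $\{z\},\{u,v\},W$ is
$$
Q=\begin{pmatrix}0&2\alpha&(n-3)\beta\\ \alpha&1&0\\ \beta&0&0\end{pmatrix}.
$$
Expanding gives
$$
\det(xI-Q)=x^3-x^2-\bigl(2\alpha^2+(n-3)\beta^2\bigr)x+(n-3)\beta^2 .
$$
Substituting $\alpha=2(n-1)/(n+1)$ and $\beta=(n-1)/n$ should reproduce the coefficients of $q_n$. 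This is a routine rational simplification: $2\alpha^2+(n-3)\beta^2=(n-1)^2\bigl(8n^2+(n-3)(n+1)^2\bigr)/(n^2(n+1)^2)$, and one then checks $8n^2+(n-3)(n+1)^2=(n-1)(n^2+8n+3)$. The multiplicities total $(n-4)+1+3=n$, so the spectrum is as claimed.

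\textbf{Sign pattern of the roots.} $Q$ is similar to a symmetric matrix, so its roots are real. Their product is $-q_n(0)=-(n-3)(n-1)^2/n^2<0$ for $n\ge4$. Hence either exactly one root is negative or all three are. The Perron root is positive, which rules out three negative roots. So exactly one root $\theta_3$ is negative, and the other two are positive; they are distinct because $q_n(0)>0$ while $q_n$ is negative somewhere in $(0,\theta_1)$. If that is not immediate, I would evaluate $q_n(1)=-\frac{(n-1)^3(n^2+8n+3)}{n^2(n+1)^2}+\frac{(n-3)(n-1)^2}{n^2}<0$, which separates $\theta_2<1<\theta_1$. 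Alternatively, positivity of the two roots follows because the sum of pairwise products $-(2\alpha^2+(n-3)\beta^2)$ is negative.

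\textbf{Energy.} The trace of $ISI(S_n^+)$ is zero, so the energy is twice the sum of the absolute values of the negative eigenvalues. The negative eigenvalues are $-1$ and $\theta_3$, giving $\dot E_{ISI}(S_n^+)=2(1+|\theta_3|)=2(1-\theta_3)$.

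\textbf{Main obstacle.} The only delicate step is the algebraic identity behind the coefficient of $x$, namely the factorisation $8n^2+(n-3)(n+1)^2=(n-1)(n^2+8n+3)$, which I would verify by direct expansion. The rest is bookkeeping.
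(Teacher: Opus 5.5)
Your proposal is correct and follows the paper's proof essentially step for step. It uses the same forced eigenvalues $0^{[n-4]}$ and $-1$, the same $3\times3$ equitable quotient with $\det(xI-Q)=x^3-x^2-(2\alpha^2+(n-3)\beta^2)x+(n-3)\beta^2$, the same sign count (negative root product together with a positive Perron root), and the same trace argument for the energy. Your extra evaluation $q_n(1)=-2\alpha^2<0$ is a worthwhile addition, because it gives $\theta_2<1<\theta_1$ and hence the strict inequality $\theta_1>\theta_2$, which the paper states without explicit justification.
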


\begin{proof}
	The $n-3$ ordinary leaves have identical rows, giving $0$ with multiplicity $n-4$. The vector that is $1$ on one of the two joined leaves, $-1$ on the other, and $0$ elsewhere is an eigenvector with eigenvalue $-1$. 
	
	On vectors that are constant on the two joined leaves and constant on the remaining $n-3$ leaves, the quotient matrix is
	$$
	Q_n=
	\begin{pmatrix}
		0&\frac{4(n-1)}{n+1}&\frac{(n-1)(n-3)}{n}\\
		\frac{2(n-1)}{n+1}&1&0\\
		\frac{n-1}{n}&0&0
	\end{pmatrix}.
	$$
	A direct determinant calculation gives
	$
	\det(xI-Q_n)=q_n(x).
	$
	Since $Q_n$ is similar to a real symmetric matrix, all three roots are real. Its constant term is positive, so the product $\theta_1\theta_2\theta_3$ is negative. The Perron root is positive, and therefore exactly one root is negative. Since the sum of the roots is $1$, the energy is
	$$
	\dot E_{ISI}(S_n^+)=1+\theta_1+\theta_2-\theta_3
	=1+(1-\theta_3)-\theta_3
	=2(1-\theta_3).
	$$
\end{proof}

The $ISI$ energy of $S_n^+$ exceeds that of $S_n.$

\begin{theorem}\label{thm:star-plus-energy-increase}
	For every $n\geq4$,
	$$
	\dot E_{ISI}(S_n^+)>
	\dot E_{ISI}(S_n).
	$$
	Equivalently, deleting the added leaf--leaf edge from $S_n^+$ strictly decreases the $ISI$ energy.
\end{theorem}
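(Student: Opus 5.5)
The plan is to turn the energy comparison into a sign test of the cubic $q_n$ at a single explicit point. By Theorem~\ref{thm:star-plus-correct-spectrum} we have $\dot E_{ISI}(S_n^+)=2(1-\theta_3)$. From the displayed spectrum of $S_n$, $\dot E_{ISI}(S_n)=2s$ with
$$
s=\frac{(n-1)^{3/2}}{n},\qquad s^2=\frac{(n-1)^3}{n^2}.
$$
So the claim is equivalent to $\theta_3<1-s$.

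First I locate the test point. For $n\ge4$ we have $s>1$: at $n=4$, $s=3\sqrt3/4>1$, and $s$ is increasing in $n$. Hence $x_0:=1-s<0<\theta_2$. Since $q_n$ is monic with real roots $\theta_3<\theta_2<\theta_1$, it is negative on $(-\infty,\theta_3)$ and positive on $(\theta_3,\theta_2)$. Therefore, for a point $x_0<\theta_2$,
$$
\theta_3<x_0 \iff q_n(x_0)>0,
$$
and it suffices to prove $q_n(1-s)>0$.

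Next I simplify the coefficients in terms of $s$. Write $q_n(x)=x^3-x^2-Cx+D$ with
$$
C=\frac{(n-1)^3(n^2+8n+3)}{n^2(n+1)^2}=s^2\Bigl(1+\frac{6n+2}{(n+1)^2}\Bigr)=s^2(1+k),\qquad D=\frac{(n-3)(n-1)^2}{n^2}\ge0,
$$
where $k=(6n+2)/(n+1)^2>0$. Put $x_0=1-s$. Then $x_0^3-x_0^2=x_0^2(x_0-1)=-s(s-1)^2$ and $-Cx_0=C(s-1)$. Hence
$$
q_n(1-s)=(s-1)\bigl(C-s(s-1)\bigr)+D=(s-1)\bigl(ks^2+s\bigr)+D.
$$

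To conclude, note that $s>1$ and $ks^2+s>0$, so the first term is strictly positive, and $D\ge0$. Thus $q_n(1-s)>0$, which gives $\theta_3<1-s$ and $\dot E_{ISI}(S_n^+)=2(1-\theta_3)>2s=\dot E_{ISI}(S_n)$. The equivalent formulation in the statement, about deleting the added leaf--leaf edge, is the same inequality read in reverse.

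The main obstacle is the algebraic step of expressing $C$ as $s^2(1+k)$, which makes $q_n(1-s)$ factor cleanly. I expect this to go through since $(n^2+8n+3)-(n+1)^2=6n+2$. A secondary point to verify is $s>1$ for all $n\ge4$, needed so that $1-s$ lies below $\theta_2$. Together with the root ordering $\theta_1>\theta_2>0>\theta_3$ from Theorem~\ref{thm:star-plus-correct-spectrum}, this justifies the sign argument.
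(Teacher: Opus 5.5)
Your proof is correct. Its skeleton matches the paper's. Both reduce the claim to $\theta_3<1-s$ and then to the single sign condition $q_n(1-s)>0$, using the root pattern $\theta_1>\theta_2>0>\theta_3$ from Theorem~\ref{thm:star-plus-correct-spectrum}. Your explicit check that $s>1$, so that $1-s<0<\theta_2$, makes precise a point the paper uses tacitly.

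The difference lies in how $q_n(1-s)>0$ is verified. The paper substitutes $u=\sqrt{n-1}$ and clears denominators to get $q_n(1-s)=u^3H(u)/\bigl((u^2+1)^3(u^2+2)^2\bigr)$, where $H$ is a degree-nine polynomial with mixed signs. It then shifts $u=\sqrt3+v$ so that every coefficient becomes positive. That certificate is valid but amounts to computer-algebra bookkeeping and is hard to check by hand.

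Your rewriting $C=s^2(1+k)$ turns the same quantity into $(s-1)\,s\,(ks+1)+D$, which is manifestly positive. Expanding your expression in $u$ reproduces the paper's $H(u)$ exactly, so the two are the same number written differently.

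Your form also shows why the inequality holds. Since $C=2\alpha^2+(n-3)\beta^2$ and $s^2=(n-1)\beta^2$, one has $ks^2=C-s^2=2(\alpha^2-\beta^2)$. This is positive precisely because, once the leaf--leaf edge is added, the two center--leaf edges at the joined leaves carry the larger weight $\alpha>\beta$.

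So your route gives a short, hand-checkable proof with a structural explanation. The paper's expansion gives an explicit rational expression for $q_n(1-s)$ but no additional insight.
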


\begin{proof}
	Let
	$
	s_n=\tfrac{(n-1)^{3/2}}{n}
	$
	and let $\theta_3<0$ be the unique negative zero of $q_n$. By Theorem~\ref{thm:star-plus-correct-spectrum}, it is enough to prove
	$
	1-\theta_3>s_n,
	$
	or equivalently
	$
	\theta_3<1-s_n.
	$
	Let $u=\sqrt{n-1}$, so $u\geq\sqrt3$ and $n=u^2+1$. By direct substitution, we obtain
	$$
	q_n(1-s_n)
	=
	\frac{u^3H(u)}{(u^2+1)^3(u^2+2)^2},
	$$
	where
	$$
	H(u)=2u^9+5u^8+2u^7+2u^6-8u^5-13u^4-16u^3-12u^2-8u-4.
	$$
	Writing $u=\sqrt3+v$ with $v\geq0$ yields
	$$
	\begin{aligned}
		H(\sqrt3+v)={}&2v^9+(5+18\sqrt3)v^8+(218+40\sqrt3)v^7+(422+518\sqrt3)v^6+(2386+852\sqrt3)v^5\\
		&+(3227+2438\sqrt3)v^4+(4910+2588\sqrt3)v^3+(3804+2034\sqrt3)v^2\\
		&+(1324+1008\sqrt3)v +302+88\sqrt3>0.
	\end{aligned}
	$$
	Hence $q_n(1-s_n)>0$. Since $q_n(x)\to-\infty$ as $x\to-\infty$, $q_n(0)>0$, and $q_n$ has exactly one negative zero, we obtain
	$
	\theta_3<1-s_n<0.
	$
	Therefore
	$$
	\dot E_{ISI}(S_n^+)=2(1-\theta_3)>2s_n=\dot E_{ISI}(S_n).
	$$
\end{proof}

\begin{table}[H]
	\centering
	\small
	\begin{tabular}{|c|c|c|c|c|}
		\hline
		$n$ & $\dot E_{ISI}(S_n)$ & $\dot E_{ISI}(S_n^+)$ & Difference & $\theta_3$\\
		\hline
		$4$ & $2.598076$ & $5.031666$ & $2.433590$ & $-1.515833$\\
		\hline
		$5$ & $3.200000$ & $5.799705$ & $2.599705$ & $-1.899853$\\
		\hline
		$11$ & $5.749596$ & $8.595279$ & $2.845683$ & $-3.297640$\\
		\hline
		$17$ & $7.529412$ & $10.367309$ & $2.837897$ & $-4.183654$\\
		\hline
		$26$ & $9.615385$ & $12.395550$ & $2.780165$ & $-5.197775$\\
		\hline
		$33$ & $10.970869$ & $13.706725$ & $2.735856$ & $-5.853363$\\
		\hline
		$63$ & $15.498047$ & $18.098260$ & $2.600213$ & $-8.049130$\\
		\hline
		$100$ & $19.700751$ & $22.205317$ & $2.504566$ & $-10.102659$\\
		\hline
	\end{tabular}
	\caption{Numerical verification of Theorem~\ref{thm:star-plus-energy-increase}. The values are computed from the exact cubic in Theorem~\ref{thm:star-plus-correct-spectrum}.}
	\label{tab:star-plus-corrected}
\end{table}

Table \ref{tab:star-plus-corrected} gives the numerical values of $\dot E_{ISI}(S_n)$ and  $\dot E_{ISI}(S_n^+)$.

\section{Conclusion and future directions}\label{section 6}

The degree-based weighted adjacency framework provides a common language for a large collection of matrices arising from vertex-degree-based topological indices. For complete multipartite graphs, the decisive reduction is the diagonally symmetrized equitable quotient. It gives the characteristic polynomial, rank, nullity, spectral radius, energy, and integrality information in a compact form and also shows that the natural condition behind three distinct weighted eigenvalues is constancy of the off-diagonal entries of the symmetric quotient, rather than regularity by itself. This distinction produces genuine irregular three-eigenvalue families for familiar weights such as $ISI$ and $M2$. 

Results of energy change due to edge deletion in specific graph families are presented in the paper along with the weighted adjacency matrices that go with them. For almost all weighted adjacency matrices, the energy of the complete graph increases by edge deletion, correcting a finding from \cite{bilalpak}.
Additionally, it is demonstrated that deleting edge from a regular complete multipartite graph increases its energy, which solves problem in \cite{bilalpak}.
Additionally, spectral property results for the crown multipartite graphs are shown. It is challenging to determine if energy increases or decreases when edges are removed from a weighted matrix of a graph.

For graph perturbations, the principal lesson is that a degree-based weighted matrix must be recomputed after an edge is modified. In the complete graph, this leads to an exact threshold separating increase, equality, and decrease of both spectral radius and energy. The same structural viewpoint yields an explicit reduction for one-edge deletion from regular complete multipartite graphs, exact spectra for crown multipartite graphs through a Kronecker product, and a complete $ISI$ energy comparison between $S_n$ and $S_n^+$. These results replace numerical extrapolation by small quotient matrices and exact algebraic criteria.

Several directions remain open. It would be useful to characterize all complete multipartite graphs whose symmetric quotient is singular, to classify irregular families having exactly three or four weighted eigenvalues for the standard degree functions, and to determine integral weighted spectra beyond the regular and crown families. Another natural problem is to extend the one-edge deletion trichotomy from $K_n$ to graphs with two or more degree classes and to identify classes of functions $\phi$ for which energy is monotone under prescribed graph transformations. Recent work on function-weighted spectral radii, perturbations, Li--Feng transformations, and generalized weighted spectral models \cite{LiZheng2025,GaoLiYangZheng2025,LiZheng2026} suggests that quotient methods, eigenvector comparison, and majorization techniques may provide an effective route to these questions.

 \section*{Declarations}
 \noindent \textbf{Data Availability:}	There is no data associated with this article.
 
 \noindent \textbf{Funding:} The authors did not receive support from any organization for the submitted work.
 
 \noindent \textbf{Conflict of interest:} The authors have no competing interests to declare that are relevant to the content of this article.
 


\begin{thebibliography}{0}
	
	
	\bibitem{akbari}
	S. Akbari, E. Ghorbani and M. R. Oboudi,
	Edge addition, singular values, and energy of graphs and matrices,
	\emph{Linear Algebra Appl.} \textbf{430} (2009), 2192--2199.
	
	\bibitem{bilalpak}
	A. Bilal and M. M. Munir,
	ISI energy change due to an edge deletion,
	\emph{Int. J. Quantum Chem.} \textbf{124} (2024), e27501.
	
	\bibitem{braudli}
	R. Brualdi, L. Hogben and B. Shader,
	Energy of a graph,
	in \emph{Notes for the AIM Workshop on Spectra of Families of Matrices Described by Graphs, Digraphs, and Sign Patterns}, 2007.
	
	\bibitem{ping}
	G. Chartrand and P. Zhang,
	\emph{Introduction to Graph Theory},
	Tata McGraw--Hill, New Delhi, 2006.
	
	\bibitem{chen2018}
	X. Chen,
	On $ABC$ eigenvalues and $ABC$ energy,
	\emph{Linear Algebra Appl.} \textbf{544} (2018), 141--157.
	
	\bibitem{chen}
	X. Chen and X. Liu,
	Remarks on the bounds of graph energy in terms of vertex cover number or matching number,
	\emph{Czech. Math. J.} \textbf{71} (2021), 309--319.
	
	\bibitem{cds}
	D. Cvetkovi'c, P. Rowlinson and S. Simi'c,
	\emph{An Introduction to the Theory of Graph Spectra},
	London Mathematical Society Student Texts, Vol. 75,
	Cambridge University Press, Cambridge, 2010.
	
	\bibitem{dasenergy}
	K. C. Das, I. Gutman, I. Milovanovi'c, E. Milovanovi'c and B. Furtula,
	Degree-based energies of graphs,
	\emph{Linear Algebra Appl.} \textbf{554} (2018), 185--204.
	
	\bibitem{dgf}
	K. C. Das, I. Gutman and B. Furtula,
	On spectral radius and energy of extended adjacency matrix of graphs,
	\emph{Appl. Math. Comput.} \textbf{296} (2017), 116--123.
	
	\bibitem{day}
	J. Day and W. So,
	Singular value inequality and graph energy change,
	\emph{Electron. J. Linear Algebra} \textbf{16} (2007), 291--299.
	
	\bibitem{day2}
	J. Day and W. So,
	Graph energy change due to edge deletion,
	\emph{Linear Algebra Appl.} \textbf{428} (2008), 2070--2078.
	
	\bibitem{guo}
	X. Guo and Y. Gao,
	Arithmetic-geometric spectral radius and energy of graphs,
	\emph{MATCH Commun. Math. Comput. Chem.} \textbf{83} (2020), 651--660.
	
	\bibitem{gutmanenergy}
	I. Gutman,
	The energy of a graph,
	\emph{Ber. Math.-Statist. Sekt. Forsch. Graz} \textbf{103} (1978), 1--22.
	
	\bibitem{gutman2021}
	I. Gutman,
	Geometric approach to degree-based topological indices: Sombor indices,
	\emph{MATCH Commun. Math. Comput. Chem.} \textbf{86} (2021), 197--220.
	
	\bibitem{gutman2022}
	I. Gutman, J. Monsalve and J. Rada,
	A relation between a vertex-degree-based topological index and its energy,
	\emph{Linear Algebra Appl.} \textbf{636} (2022), 134--142.
	
	\bibitem{gs}
	I. Gutman and J. Y. Shao,
	The energy change of weighted graphs,
	\emph{Linear Algebra Appl.} \textbf{435} (2011), 2425--2431.
	
	\bibitem{gutmancontributions}
	I. Gutman, I. Red\v{z}epovi'c and J. Rada,
	Relating energy and Sombor energy,
	\emph{Contrib. Math.} \textbf{4} (2021), 41--44.
	
	\bibitem{harary}
	F. Harary and A. J. Schwenk,
	Which graphs have integral spectra?,
	in \emph{Graphs and Combinatorics, Proceedings of the Capital Conference on Graph Theory and Combinatorics},
	Springer, Berlin, 1974, 45--51.
	
	\bibitem{HB}
	H. A. Ganie and B. A. Chat,
	Bounds for the energy of weighted graphs,
	\emph{Discrete Appl. Math.} \textbf{268} (2019), 91--101.
	
	\bibitem{glza}
	M. Ghorbani, X. Li, S. Zangi and N. Amraei,
	On the eigenvalue and energy of extended adjacency matrix,
	\emph{Appl. Math. Comput.} \textbf{397} (2021), 125939.
	
	\bibitem{li3}
	X. Li, Y. Li and J. Song,
	The asymptotic value of graph energy for random graphs with degree-based weights,
	\emph{Discrete Appl. Math.} \textbf{284} (2020), 481--488.
	
	\bibitem{li2}
	X. Li, Y. Li and Z. Wang,
	The asymptotic value of energy for matrices with degree-distance-based entries of random graphs,
	\emph{Linear Algebra Appl.} \textbf{603} (2020), 390--401.
	
	\bibitem{shi}
	X. Li, Y. Shi and I. Gutman,
	\emph{Graph Energy},
	Springer, New York, 2012.
	
	\bibitem{lin1}
	Z. Lin, B. Deng, L. Miao and H. Li,
	On the spectral radius, energy and Estrada index of the arithmetic-geometric matrix of a graph,
	\emph{Discrete Math. Algorithms Appl.} \textbf{14} (2022), 2150108.
	
	\bibitem{nv}
	V. Nikiforov,
	Beyond graph energy: norms of graphs and matrices,
	\emph{Linear Algebra Appl.} \textbf{506} (2016), 82--138.
	
	\bibitem{bilalMatch}
	B. A. Rather and M. Imran,
	Sharp bounds on the Sombor energy of graphs,
	\emph{MATCH Commun. Math. Comput. Chem.} \textbf{88} (2022), 605--624.
	
	\bibitem{bilalMatch1}
	B. A. Rather and M. Imran,
	A note on energy and Sombor energy of graphs,
	\emph{MATCH Commun. Math. Comput. Chem.} \textbf{89} (2023), 467--477.
	
	\bibitem{bilalmdpi}
	B. A. Rather, H. A. Ganie, K. C. Das and Y. Shang,
	General extended adjacency eigenvalues of chain graphs,
	\emph{Mathematics} \textbf{12} (2024), 192.
	
	\bibitem{rodriguez2015}
	J. M. Rodr'iguez and J. M. Sigarreta,
	Spectral study of the geometric-arithmetic index,
	\emph{MATCH Commun. Math. Comput. Chem.} \textbf{74} (2015), 121--135.
	
	\bibitem{wang}
	W.-H. Wang and W. So,
	Graph energy change due to any single edge deletion,
	\emph{Electron. J. Linear Algebra} \textbf{29} (2015), 59--73.
	
	\bibitem{zheng}
	L. Zheng, G. X. Tian and S. Y. Cui,
	On spectral radius and energy of arithmetic-geometric matrix of graphs,
	\emph{MATCH Commun. Math. Comput. Chem.} \textbf{83} (2020), 635--650.
	
	\bibitem{Jamal2023}
	F. Jamal, M. Imran and B. A. Rather,
	On inverse sum indeg energy of graphs,
	\emph{Special Matrices} \textbf{11} (2023), 20220175.
	
	\bibitem{LiYang2023}
	X. Li and N. Yang,
	Some interlacing results on weighted adjacency matrices of graphs with degree-based edge-weights,
	\emph{Discrete Appl. Math.} \textbf{333} (2023), 110--120.
	
	\bibitem{GaoLiYang2024}
	J. Gao, X. Li and N. Yang,
	The effect on the largest eigenvalue of degree-based weighted adjacency matrix by perturbations,
	\emph{Bull. Malays. Math. Sci. Soc.} \textbf{47} (2024), Article 30.
	
	\bibitem{GaoYang2024}
	J. Gao and N. Yang,
	Some bounds on the largest eigenvalue of degree-based weighted adjacency matrix of a graph,
	\emph{Discrete Appl. Math.} \textbf{356} (2024), 21--31.
	
	\bibitem{LiYangLAA2024}
	X. Li and N. Yang,
	Unified approach for spectral properties of weighted adjacency matrices for graphs with degree-based edge-weights,
	\emph{Linear Algebra Appl.} \textbf{696} (2024), 46--67.
	
	\bibitem{LiYangActa2024}
	X. Li and N. Yang,
	Spectral properties and energy of weighted adjacency matrices for graphs with degree-based edge-weight functions,
	\emph{Acta Math. Sin. (Engl. Ser.)} \textbf{40} (2024), 3027--3042.
	
	\bibitem{ShenShan2025}
	C. Shen and H. Shan,
	Extremal spectral radius of degree-based weighted adjacency matrices of graphs with given order and size,
	\emph{Discrete Appl. Math.} \textbf{361} (2025), 315--323.
	
	\bibitem{LiZheng2025}
	X. Li and R. Zheng,
	Extremal results on the spectral radius of function-weighted adjacency matrices,
	\emph{Discrete Appl. Math.} \textbf{373} (2025), 204--216.
	
	\bibitem{GaoLiYangZheng2025}
	J. Gao, X. Li, N. Yang and R. Zheng,
	The Li--Feng transformation of weighted adjacency matrices for graphs with degree-based edge-weights,
	\emph{Discrete Math.} \textbf{348} (2025), 114520.
	
	\bibitem{LiZheng2026}
	X. Li and R. Zheng,
	On the relationship between $({A}^{f})_{\alpha}$-spectral radii of graphs with starlike branch tree or bouquet branch graph and its linear order,
	\emph{Discrete Math.} \textbf{349} (2026), 114734.
	
\end{thebibliography}
\end{document}